\documentclass[11pt]{article}

\usepackage[margin=1in]{geometry}
\usepackage{amsmath,amssymb,amsthm,mathtools,mathrsfs}
\renewcommand{\leq}{\leqslant}
\renewcommand{\le}{\leqslant}
\renewcommand{\geq}{\geqslant}
\renewcommand{\ge}{\geqslant}
\usepackage{microtype}
\usepackage{aliascnt}
\usepackage{enumitem}
\usepackage{array,booktabs,longtable}
\usepackage{cite}
\usepackage{xcolor}
\usepackage[colorlinks=true,linkcolor=blue!55!black,citecolor=blue!55!black,
  urlcolor=blue!55!black]{hyperref}
\hypersetup{
  pdftitle={Sharp Gaussian Divergence and Bernstein--Markov Inequalities},
  pdfauthor={Fan Chen, Sinho Chewi, Jianfeng Lu, Matthew S. Zhang}
}

\allowdisplaybreaks
\newtheorem{theorem}{Theorem}[section]
\newaliascnt{conjecture}{theorem}
\newtheorem{conjecture}[conjecture]{Conjecture}
\aliascntresetthe{conjecture}
\newaliascnt{proposition}{theorem}
\newtheorem{proposition}[proposition]{Proposition}
\aliascntresetthe{proposition}
\newaliascnt{lemma}{theorem}
\newtheorem{lemma}[lemma]{Lemma}
\aliascntresetthe{lemma}
\newaliascnt{corollary}{theorem}
\newtheorem{corollary}[corollary]{Corollary}
\aliascntresetthe{corollary}
\newaliascnt{remark}{theorem}
\newtheorem{remark}[remark]{Remark}
\aliascntresetthe{remark}

\usepackage[nameinlink,capitalize,noabbrev]{cleveref}
\crefname{conjecture}{Conjecture}{Conjectures}
\Crefname{conjecture}{Conjecture}{Conjectures}

\newcommand{\R}{\mathbb R}
\newcommand{\E}{\mathbb E}
\newcommand{\Pp}{\mathbb P}
\newcommand{\cN}{\mathcal N}
\newcommand{\ip}[2]{\langle #1,#2\rangle}
\newcommand{\norm}[1]{\lVert #1\rVert}
\newcommand{\abs}[1]{\lvert #1\rvert}
\newcommand{\1}{\mathbf 1}
\newcommand{\dd}{\mathrm d}
\newcommand{\deq}{\coloneqq}
\newcommand{\HS}{\mathrm{HS}}
\newcommand{\cH}{\mathcal H}
\newcommand{\Dom}{\operatorname{Dom}}

\newcommand{\Id}{\mathrm{Id}}

\title{Sharp Gaussian Divergence and Bernstein--Markov Inequalities}
\author{
 Fan Chen\thanks{Department of Electrical Engineering and Computer Science,
 Massachusetts Institute of Technology.
 Email: \href{mailto:fanchen@mit.edu}{\texttt{fanchen@mit.edu}}.}
 \and
 Sinho Chewi\thanks{Department of Statistics and Data Science, Yale University.
 Email: \href{mailto:sinho.chewi@yale.edu}{\texttt{sinho.chewi@yale.edu}}.}
 \and
 Jianfeng Lu\thanks{Department of Mathematics, Duke University.
 Email: \href{mailto:jianfeng@math.duke.edu}{\texttt{jianfeng@math.duke.edu}}.}
 \and
 Matthew S. Zhang\thanks{Department of Mathematics, Massachusetts Institute of Technology.
 Email: \href{mailto:shuns436@mit.edu}{\texttt{shuns436@mit.edu}}.}
}
\date{}

\begin{document}
\maketitle

\begin{abstract}
We prove two dimension-free estimates in Gaussian space.  The first is an
optimal Meyer-type inequality for the Gaussian divergence: for $p\geq2$,
\[
 \norm{\delta V}_{L^p(\gamma_d)}
 \leq \sqrt p\,\abs{\E V}
      +Cp\,\norm{DV}_{L^p(\gamma_d;\HS_d)}\, .
\]
Here $\HS_d$ is the space of $d\times d$ matrices with its Hilbert--Schmidt
norm.
Its main ingredient is a dimension-free weak-type $(1,1)$
bound for the second-order transform $D^2\mathcal N^{-1}$,
where $\mathcal N\deq-\Delta+x\cdot D$ is the Ornstein--Uhlenbeck operator. We also include a direct change-of-variables proof of the weaker bounded derivative divergence inequality in the appendix.

The second result is a Gaussian Bernstein--Markov inequality.  If $P$ is a polynomial, $p\geq2$, and
$m_p\deq(\E\abs g^p)^{1/p}$ for a standard Gaussian $g$, then
\[
 \norm{DP}_{L^p(\gamma_d;\R^d)}
 \leq\frac{2\sqrt{2e \deg P}}{m_p}\,\norm P_{L^p(\gamma_d)}\, .
\]
The factor $2$ is unnecessary when $p$ is even, or $P$ is even or odd. Thus, the constant is of order $\sqrt{\deg P/p}$ throughout the range $p\geq2$.
\end{abstract}

{\setlength{\parskip}{0pt}
\tableofcontents
}

\section{Introduction}
\label{sec:intro}

Consider the standard Gaussian measure
\[
 \gamma_d(\dd x)\deq(2\pi)^{-d/2}e^{-\abs{x}^2/2}\,\dd x
\]
on $\R^d$, and write $\E$ for integration
against $\gamma_d$.  Let $D$ denote the gradient, with
$D_j\deq\partial_j\deq\partial/\partial x_j$, and let
$\Delta\deq\sum_{j=1}^d\partial_j^2$ be the Laplacian.
A vector field
$V:\R^d\to\R^d$ has the derivative matrix $DV\deq(D_jV_i)_{i,j=1}^d$ and
the Gaussian divergence
\[
 \delta V(x)\deq x\cdot V(x)-\operatorname{tr}(DV(x))\, .
\]
We measure $DV$ in the Hilbert--Schmidt (Frobenius) norm,
\[
 \abs{DV}_{\HS}^2\deq\sum_{i,j=1}^d\abs{D_jV_i}^2\, ,
\]
and write $\HS_d$ for $\R^{d\times d}$ with this norm.
Gaussian integration by parts shows that $\delta$ is the adjoint of $D$ in
$L^2(\gamma_d)$.  The composition
\[
 \cN\deq\delta D=-\Delta+x\cdot D
\]
is the Ornstein--Uhlenbeck operator.  It is self-adjoint on $L^2(\gamma_d)$, its eigenvalues are
the integers $0,1,2,\dots$ with respective eigenspaces the Wiener chaoses of integer order, and
$-\cN$ generates the Ornstein--Uhlenbeck semigroup.

These are the finite-dimensional instances of the Malliavin derivative, the Skorokhod integral, and the number operator of the Malliavin calculus \cite{Nua06Malliavin}, and we use the two vocabularies interchangeably, while using the notation from Malliavin analysis.  Precise definitions, the Sobolev spaces on which $D$, $\delta$, and $\cN$ act, and their elementary facts are collected in \cref{sec:preliminaries}.

The purpose of this paper is to prove the following two results.

\begin{theorem}[Sharp Meyer-type divergence inequality]\label{thm:main}
There is a universal constant $C<\infty$ such that, for every $d\geq1$,
every $p\geq2$, and every
$V\in W^{1,p}(\gamma_d;\R^d)$,
\begin{equation}\label{eq:main}
 \norm{\delta V}_{L^p(\gamma_d)}
 \leq \sqrt p\,\abs{\E V}
      +Cp\,\norm{DV}_{L^p(\gamma_d;\HS_d)}\, .
\end{equation}
One may take $C=48$.
Conversely, there is a universal constant $c>0$ with the following
property: if $d\geq1$, $p\geq2$, and $A,B\geq0$ are such that
\begin{equation}\label{eq:main-converse}
 \norm{\delta V}_{L^p(\gamma_d)}
 \leq A\,\abs{\E V}+B\,\norm{DV}_{L^p(\gamma_d;\HS_d)}
 \qquad\text{for every }V\in W^{1,p}(\gamma_d;\R^d)\, ,
\end{equation}
then $A\geq c\sqrt p$ and $B\geq cp$.
One may take $c=1/e$.
\end{theorem}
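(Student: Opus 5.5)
Write $V=\E V+W$ with $\E W=0$, so that $\delta V=x\cdot\E V+\delta W$. The constant part is explicit: $x\cdot\E V$ is a centered Gaussian with standard deviation $\abs{\E V}$, hence
\[
 \norm{x\cdot\E V}_{L^p(\gamma_d)}=m_p\abs{\E V}\leq\sqrt p\,\abs{\E V}
\]
for $p\geq2$. It therefore suffices to show $\norm{\delta W}_{L^p}\leq Cp\,\norm{DW}_{L^p(\HS)}$ when $\E W=0$.

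The plan is to represent $\delta W$ through the second-order transform $D^2\cN^{-1}$ applied to $DW$. The intertwining $D\cN=(\cN+1)D$ gives $\delta W=\cN\cN^{-1}\delta W$. Writing $\delta W=\delta D\cN^{-1}\delta W$, we test against a mean-zero $g$:
\[
 \E[\delta W\,g]=\E\ip{W}{Dg}
 =\E\ip{W}{D\cN\cN^{-1}g}
 =\E\ip{W}{(\cN+1)D\cN^{-1}g}.
\]
Moving $(\cN+1)$ onto $W$ componentwise and using $\cN=\delta D$ gives
\[
 \E[\delta W\,g]=\E\ip{DW}{D^2\cN^{-1}g}_{\HS}+\E\ip{W}{D\cN^{-1}g}.
\]
The second term is lower order. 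Since $\E W=0$, Gaussian Poincaré-type bounds in $L^p$ control $\norm{W}_{L^p}$ by $\sqrt p\,\norm{DW}_{L^p}$ (e.g.\ via Pisier's inequality with constant $O(\sqrt p)$). Meanwhile $D\cN^{-1}$ is bounded on $L^{p'}$ dimension-free, since $\cN^{-1}$ is smoothing. By duality we then need $\norm{D^2\cN^{-1}}_{L^{p'}\to L^{p'}(\HS)}\lesssim p$ for $p'=p/(p-1)\in(1,2]$.

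This is where the weak-type $(1,1)$ bound for $D^2\cN^{-1}$ enters. $L^2$ boundedness is trivial: on the $k$-th chaos $\E\abs{D^2 h}_{\HS}^2=k(k-1)\E h^2$, so $\norm{D^2\cN^{-1}h}_2\leq\norm h_2$. Given a dimension-free weak $(1,1)$ bound, Marcinkiewicz interpolation yields an $L^{p'}$ bound of order $1/(p'-1)\asymp p$, which is exactly the right growth.

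Proving the weak $(1,1)$ bound is the main obstacle. In Gaussian harmonic analysis the Riesz transforms of order two are known to be weak $(1,1)$ (García-Cuerva, Mauceri, Sjögren, Torrea), but with dimension-dependent constants, and higher-order ones fail. I would write $D^2\cN^{-1}=\int_0^\infty D^2e^{-t\cN}\,dt$ and use Mehler's kernel to obtain
\[
 D^2e^{-t\cN}h(x)=\frac{e^{-2t}}{1-e^{-2t}}\,\E\bigl[h(e^{-t}x+\sqrt{1-e^{-2t}}\,y)\,(y\otimes y-\Id)\bigr].
\]
Then I would split into a local part (small $t$, handled by a Calderón–Zygmund-type argument) and a global part (large $t$, handled by a direct maximal or change-of-variables estimate). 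Keeping constants dimension-free will likely require working with the HS norm of the Hermite tensor $y\otimes y-\Id$ via Gaussian rotation tricks rather than kernel size estimates.

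For the converse, $A\geq c\sqrt p$ follows from taking $V$ constant, which gives $\norm{\delta V}_p=m_p\abs{\E V}$ and $m_p\geq\sqrt{p}/e$. For $B\geq cp$, take $d=1$ and $V=H_k$ with $\E V=0$, so that $\delta V=H_{k+1}$ and $DV=kH_{k-1}$. Hypercontractivity-sharp moments give $\norm{H_n}_p\approx(p-1)^{n/2}\sqrt{n!}$ up to lower-order factors for fixed $n$ as $p\to\infty$. The ratio $\norm{H_{k+1}}_p/(k\norm{H_{k-1}}_p)$ is then about $(p-1)\sqrt{(k+1)!/(k-1)!}/k\approx p$. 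Concretely, $k=1$ with $V=x$ gives $\delta V=x^2-1$ and $DV=1$, and $\norm{x^2-1}_p\gtrsim p/e$ follows from Stirling applied to $\E\abs{g}^{2p}$.
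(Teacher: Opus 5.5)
\textbf{Verdict: there is a genuine gap.} The sharp dependence on $p$ rests entirely on a dimension-free weak-type $(1,1)$ bound for $D^2\cN^{-1}$, and your proposal does not prove it.

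\textbf{What is correct.} Your reduction is sound and agrees with the paper:
\begin{itemize}
\item splitting off $\E V$, which contributes $m_p\abs{\E V}$;
\item the identity $\E[\delta W\,g]=\E\ip{DW}{D^2\cN^{-1}g}_{\HS}+\E\ip{W}{D\cN^{-1}g}$, which checks out chaos by chaos;
\item the $L^2$ contraction of $R_2=D^2\cN^{-1}$ and Marcinkiewicz interpolation;
\item the converse examples $V=e_1$ and $V=x_1e_1$.
\end{itemize}
The paper differs only in using $(\cN+1)^{-1}D^2=[\Id+(\cN+1)^{-1}]R_2$ in place of your split.

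\textbf{The gap.} It is the step you flag yourself. The sharpness of the theorem, a factor $p$ rather than $C_d\,p$ or $Cp^2$, rests on a weak-type $(1,1)$ bound for $\abs{R_2f}_{\HS}$ with a constant independent of $d$. You give no proof of it. The route you sketch (Mehler kernel, local/global splitting, Calder\'on--Zygmund theory locally, kernel or maximal bounds globally) is the classical machinery of \cite{GarciaCuervaMauceriSjogrenTorrea1999,MauceriMeda2007}. That machinery controls single entries $D_iD_j\cN^{-1}$, with constants that depend on $d$ through the local covering and the global kernel estimates. Since $L^{1,\infty}$ is not normable, $d^2$ entrywise weak-type bounds do not combine into a dimension-free Hilbert--Schmidt bound. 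The appeal to Gaussian rotation tricks is a hope rather than an argument. As written you obtain only a $d$-dependent constant, or $Cp^2$ if you fall back on iterating first-order Meyer bounds.

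\textbf{The missing idea.} The paper uses a variational Calder\'on--Zygmund decomposition. For bounded $f\ge0$ and $\lambda>\E f$, minimize $\frac{1}{2}\E\abs{Dv}^2-\E[(f-\lambda)v]$ over $v\ge0$ via a penalization. This yields $f=\cN u+\mu$ with
\begin{itemize}
\item $u\ge0$, $0\le\mu\le\lambda$, and $\E\mu=\E f$;
\item $\mu=\lambda$ on $\Omega\deq\{u>0\}$, so $\gamma_d(\Omega)\le\E f/\lambda$;
\item $D^2u=0$ a.e.\ on $\{u=0\}$.
\end{itemize}
Then $R_2f=R_2\mu+D^2u$, so only $R_2\mu$ survives off $\Omega$. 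Chebyshev with $\norm{R_2\mu}_2^2\le\E\mu^2\le\lambda\,\E f$, treating $f^+$ and $f^-$ separately, gives the weak bound with constant $2$. Only the $L^2$ Bochner identity enters, which is why nothing depends on $d$.

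\textbf{A secondary point.} Your remainder $\E\ip{W}{D\cN^{-1}g}$ is not lower order, and $D\cN^{-1}$ is not bounded on $L^{p'}$ uniformly in $p$. Pairing with the constant field $e_1$ shows $\norm{D\cN^{-1}}_{L^{p'}\to L^{p'}}\ge m_p$. This is repairable. Mehler's formula gives $\abs{DP_tg}\le\frac{e^{-t}}{\sqrt{1-e^{-2t}}}\,m_p\,(P_t\abs{g}^{p'})^{1/p'}$, hence $\norm{D\cN^{-1}g}_{p'}\le\frac{\pi}{2}m_p\norm{g}_{p'}$. Combined with Pisier's bound $\norm{W}_p\le\frac{\pi}{2}m_p\norm{DW}_{L^p(\HS_d)}$, the term is $O(p)\,\norm{DW}_{L^p(\HS_d)}$. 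That is the same order as the main term, but harmless.
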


\begin{theorem}[Gaussian Bernstein--Markov inequality]
\label{thm:Bernstein-Markov}
Let $n\geq1$, let $P:\R^d\to\R$ be a polynomial of degree at most $n$, and
let $p\geq2$.  Then
\begin{equation}
 \norm{DP}_{L^p(\gamma_d;\R^d)}
 \leq\frac{2\sqrt{2en}}{m_p}\,
       \norm P_{L^p(\gamma_d)}\, ,
 \label{eq:Bernstein-Markov}
\end{equation}
where $m_p\deq\bigl(\E_{g \sim \gamma_1} \abs g^p\bigr)^{1/p}\asymp\sqrt p$
is the $L^p$ norm of a standard Gaussian.
Consequently,
\begin{equation}
 \norm{DP}_{L^p(\gamma_d;\R^d)}
 \leq2e\sqrt2\,\sqrt{\frac np}\,
       \norm P_{L^p(\gamma_d)}\, .
 \label{eq:Bernstein-Markov-order}
\end{equation}
The factor $2$ in \eqref{eq:Bernstein-Markov} and \eqref{eq:Bernstein-Markov-order} may be omitted if $p$ is an
even integer or if $P$ is even or odd.
\end{theorem}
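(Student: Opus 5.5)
The plan is to reduce \eqref{eq:Bernstein-Markov} to a one-variable complex-analytic Bernstein inequality by means of a Gaussian rotation (Maurey--Pisier) trick. The key inputs are a Cauchy integral formula and an $L^p$ bound on a polynomial evaluated at complex Gaussian arguments.

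\textbf{Step 1 (linearizing the gradient).} Let $x,y$ be independent with law $\gamma_d$. For fixed $x$, the variable $\langle DP(x),y\rangle$ is a centered Gaussian with standard deviation $\abs{DP(x)}$. Hence
\[
\E_y\abs{\langle DP(x),y\rangle}^p=m_p^p\abs{DP(x)}^p,
\qquad
m_p\norm{DP}_{L^p(\gamma_d;\R^d)}=\norm{\langle DP(x),y\rangle}_{L^p(\gamma_{2d})}.
\]
This identity is the only place the constant $m_p$ enters.

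\textbf{Step 2 (Cauchy formula).} The function $\zeta\mapsto F(\zeta)\deq P(x+\zeta y)$ is a polynomial in $\zeta\in\mathbb C$ of degree at most $n$, and $F'(0)=\langle DP(x),y\rangle$. For each $r>0$,
\[
\langle DP(x),y\rangle=\frac{1}{2\pi r}\int_0^{2\pi}F(re^{i\varphi})e^{-i\varphi}\,\dd\varphi .
\]
By Minkowski's inequality,
\[
m_p\norm{DP}_p\le r^{-1}\sup_\varphi\norm{P(x+re^{i\varphi}y)}_{L^p(\gamma_{2d})}.
\]

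\textbf{Step 3 (main obstacle: complex evaluation bound).} I need to show
\[
\norm{P(x+\zeta y)}_{L^p(\gamma_{2d})}\le K\,(1+\abs\zeta^2)^{n/2}\norm P_{L^p(\gamma_d)} .
\]
The target is $K=1$ in the good cases and $K\le2$ in general; a further $\sqrt2$ loss from normalization is acceptable.

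\emph{Real $\zeta$.} Here $x+\zeta y\overset{d}{=}\sqrt{1+\zeta^2}\,x$. Expanding $P$ into homogeneous or Hermite pieces, the dilation by $s=\sqrt{1+\zeta^2}\ge1$ is the inverse Ornstein--Uhlenbeck semigroup composed with a rescaling. The degree bound gives a factor $s^n$, but only after a hypercontractive or Nelson-type comparison, and this is where I expect the $\sqrt2$ to be paid.

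\emph{Complex $\zeta=re^{i\varphi}$.} I would write $x+\zeta y$ as a complex rotation of the pair $(x,y)$, scaled by $\sqrt{1+r^2}$ in a suitable sense, and use rotation invariance of $\gamma_{2d}$ to remove the phase $\varphi$.
\begin{itemize}[nosep]
\item When $p$ is even, $\abs{F}^p=F^{p/2}\overline F^{p/2}$ is itself a polynomial expression, so the rotation argument can be done algebraically and gives $K=1$.
\item When $P$ is even or odd, the symmetry $y\mapsto-y$ likewise lets the phase be absorbed, again giving $K=1$.
\item In general, I would split $P$ into its even and odd parts. Each part has norm at most $\norm P_p$ by the symmetry $x\mapsto-x$, and handling them separately produces the factor $2$.
\end{itemize}
I expect this step to be the real difficulty. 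If the rotation argument does not go through for general $p$, the fallback is a plurisubharmonicity (subharmonic averaging) argument for $\abs F^p$ in $\zeta$ combined with the real case.

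\textbf{Step 4 (optimization).} Combining Steps 1--3 gives
\[
m_p\norm{DP}_p\le K\,r^{-1}(1+r^2)^{n/2}\norm P_p .
\]
Choosing $r=n^{-1/2}$ and using $(1+1/n)^{n/2}\le\sqrt e$ yields $K\sqrt{en}\,\norm P_p$. With the $\sqrt2$ from the normalization in Step 3 this becomes $K\sqrt{2en}\,\norm P_p$, which is \eqref{eq:Bernstein-Markov}.

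Finally, \eqref{eq:Bernstein-Markov-order} follows from the elementary lower bound $m_p\ge\sqrt{p}/e$, valid for $p\ge2$. This can be checked via Stirling's bound on $\Gamma\bigl(\tfrac{p+1}2\bigr)$ in $m_p^p=2^{p/2}\Gamma\bigl(\tfrac{p+1}2\bigr)/\sqrt\pi$.
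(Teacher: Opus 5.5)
\textbf{There is a genuine gap.} Steps 1, 2 and 4 are fine in form: the paper also differentiates a holomorphic one-parameter family at $0$, uses the Cauchy estimate, and converts $\langle DP(x),y\rangle$ into $m_p\norm{DP}_p$. But everything rests on Step 3, which you do not prove and which is false as stated.

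\emph{Step 3 is false.} For real $\zeta$, the variable $x_1+\zeta y_1$ has the law of $s x_1$ with $s^2=1+\zeta^2$. Take $p=2$ and the even polynomial $P=x_1^2-1=H_2(x_1)$, where you claim $K=1$. Then $\E\abs{H_2(sx_1)}^2=3s^4-2s^2+1=2s^4+\zeta^4>(1+\zeta^2)^2\norm{H_2}_2^2$ for every $\zeta\neq0$. So a real dilation by $s$ does not cost only $s^n$. More generally, for $P=H_n$ the same chaos computation shows that $\norm{H_n(x_1+ry_1)}_2$ equals $(1+2r^2)^{n/2}\norm{H_n}_2$ up to a factor polynomial in $n$. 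Hence no $K$ uniform in $\zeta$ and $n$ exists.

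\emph{The $\sqrt2$ is not a normalization.} It comes exactly from this doubling of $r^2$. If your Step 3 held with $K=1$, you would obtain $\sqrt{en}$, which is better than what your method can actually give.

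\emph{The root cause is the translation.} The pseudo-covariance of $x+\zeta y$ is $1+\zeta^2\neq1$. Writing $x+\zeta y=\sqrt{1+\zeta^2}\,(x\cos w+y\sin w)$ with $\tan w=\zeta$ leaves a complex dilation, which is again an uncontrolled complex-Gaussian evaluation. The paper instead uses the rotation $P(X\cos z+Y\sin z)$, whose pseudo-covariance is $\cos^2z+\sin^2z=1$. Real rotation invariance reduces $\abs z=\rho$ to $Z_s=X\cosh s+iY\sinh s$ with $\abs s\le\rho$. For $Z_s$, Hermite orthogonality survives with factor $(\cosh2s)^{\abs\alpha}$, and $(\cosh2\rho)^{n/2}\le e^{n\rho^2}$ optimized at $\rho=(2n)^{-1/2}$ is where $\sqrt{2en}$ comes from. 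With rotations, your even-$p$ remark does become correct: apply the $L^2$ chaos identity to $P^{p/2}$.

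\emph{The missing idea is the non-even case.} Even with rotations in place, the real content is the comparison $\norm{P(Z_s)}_p\le(\cosh2s)^{n/2}\norm P_p$ when $p$ is not an even integer, so that $\abs P^p$ is not a polynomial. Step 3 offers no mechanism for this, and the tools you name do not supply one:
\begin{itemize}
\item Complex hypercontractivity is exactly what produces the lossy Eskenazis--Ivanisvili exponent.
\item The symmetry $y\mapsto-y$ only absorbs the phase $\pi$, not a general phase.
\item Subharmonicity of $\zeta\mapsto\log\norm{F(\zeta)}_p$, fed real-axis bounds of size $(1+t^2)^{n/2}$, only yields the Poisson extension $\abs{\zeta+i}^n$. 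That is $(1+r)^n$ at $\zeta=ir$, and optimizing $r^{-1}(1+r)^n$ gives a bound linear in $n$.
\end{itemize}

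\emph{The paper's argument.} For a homogeneous $H$ of degree $n$, it shows that $t\mapsto\E\abs{H(W_t)}^p$ is non-decreasing, where $W_t=\sqrt{(1+t)/2}\,X+i\sqrt{(1-t)/2}\,Y$. It does so by expanding this quantity as $(1-t^2)^\alpha\sum_k t^{2k}N_k/(4^k(k!)^2)$, with $N_k=\int\abs H^p\abs Q^{2k}\,\dd\gamma_{\mathbb C}^M$. It then proves the Fock-space recursion $N_{k+1}\ge4(k+1)(\alpha+k)N_k$ from $Q^*=\Delta_z$ and $[\Delta_z,Q]=4E+2M$; this is where $p\ge2$ enters. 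For a general $P$, the even and odd parts are homogenized by multiplying lower-degree pieces by powers of $L^{-1}\sum_\ell G_\ell^2$. This factor keeps pseudo-covariance $1$ under the complex rotation and tends to $1$ by the law of large numbers. The parity split costs the factor $2$.

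\emph{A small point on the last step.} The bound $m_p\ge\sqrt p/e$ yields the constant $2\sqrt2\,e^{3/2}$ in the order estimate. The stated constant $2e\sqrt2$ requires $m_p\ge\sqrt{p/e}$.
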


The upper bound in~\cref{thm:Bernstein-Markov} is optimal in order $\sqrt{n/p}$ for $p\geq2$; the matching
lower bound is proved in \cref{prop:Bernstein-Markov-sharpness}.

\begin{remark}[Finite dimension and the Malliavin setting]
\label{rem:Malliavin}
Both theorems generalize to an abstract Wiener space because their constants are independent of $d$. The Cameron--Martin space replaces $\R^d$, and its Hilbert--Schmidt operators replace $\HS_d$ in the derivative norm. Smooth
cylindrical fields are dense in the Malliavin Sobolev space
$\mathbb D^{1,p}$ \cite[Chapter~1]{Nua06Malliavin}.  Applying
\eqref{eq:main} to differences of such fields shows that their divergences are Cauchy in $L^p$; the adjoint identity identifies the limit as the Skorokhod integral of the limiting field.  Finite-dimensional projections in the target give the same extension for \cref{cor:Hilbert-main}.

For \cref{thm:Bernstein-Markov}, finite sums of chaoses of order at most
$n$ are the $L^p$ closure of cylindrical polynomials of degree at most $n$;
this follows from the usual $L^2$ approximation and equivalence of finite
Gaussian moments on a fixed finite sum of chaoses
\cite[Chapters~5--6]{Janson1997}.  The estimate applied to differences makes
their gradients Cauchy in $L^p$, and closedness of the Malliavin derivative
identifies the limit.  We work on $\R^d$ throughout, where the arguments
are more transparent.
\end{remark}

\subsection{The divergence inequality and its consequences}
\label{sec:Meyer-intro}

Recall the Gaussian Riesz transform
\[
 R\deq D\cN^{-1/2}\, ,
\]
where the spectral powers of $\cN$
are defined in \cref{sec:D-delta-N}, in particular $\cN^{-1/2}$ vanishes on constants.
Meyer~\cite{Meyer1984} proved that, for every $1<p<\infty$, and every $f$ such that $\E f=0$,
\[
 c_p\,\norm f_{L^p(\gamma_d)}
 \leq\norm{Rf}_{L^p(\gamma_d;\R^d)}
 \leq C_p\,\norm f_{L^p(\gamma_d)}
\]
with constants independent of $d$.  Substituting $f\deq\cN^{1/2}g$ gives
the two-sided comparison between $Dg$ and $\cN^{1/2}g$, since
$R\cN^{1/2}g=Dg$.

The $L^2(\gamma_d)$ adjoint of the Gaussian Riesz transform
\[
 R^*=\cN^{-1/2}\delta
\]
connects these estimates to the divergence.  By duality, the $L^{p'}$ bound
for $R$ gives an $L^p$ bound for $R^*$, where $p\geq2$ and
$p'\deq p/(p-1)$.
To estimate $\delta V$ using Meyer's inequality, we calculate (see \cref{sec:Hermite})
\[
 \delta V=\cN^{1/2}R^*V=R^*(\cN+1)^{1/2}V\, ,
\]
where $\cN$ acts componentwise on $V$.  The adjoint bound therefore reduces
the problem to estimating $(\cN+1)^{1/2}V$.  The reverse Meyer comparison
controls its $L^p$ norm by those of $V$ and $DV$, yielding
\[
 \begin{aligned}
 \norm{\delta V}_{L^p(\gamma_d)}
 &\leq C_{p}\,\norm{(\cN+1)^{1/2}V}_{L^p(\gamma_d;\R^d)}\\
 &\leq C_p\,\bigl(\norm V_{L^p(\gamma_d;\R^d)}
               +\norm{DV}_{L^p(\gamma_d;\HS_d)}\bigr)\, ,
 \end{aligned}
\]
with no dependence on $d$.  The refinement in which
$\norm V_{L^p(\gamma_d;\R^d)}$ is replaced
by $\abs{\E V}$ already appears in the standard Malliavin calculus proof
\cite[Proposition~1.5.8]{Nua06Malliavin}.  Pronk and Veraar
\cite[Proposition~4.6]{PronkVeraar2014} proved a corresponding qualitative estimate for
UMD-valued integrands. Nguyen~\cite[Proposition~2.3]{Nguyen2022Skorohod} obtained an explicit $O(p^5)$ constant in the classical Skorokhod integral estimate involving both $\norm V_{L^p(\gamma_d;\R^d)}$ and $\norm{DV}_{L^p(\gamma_d;\HS_d)}$.  What is new in \cref{thm:main} is the
sharp universal dependence $\sqrt p$ and $Cp$ uniform in dimension.

Our argument begins with the full second-order Gaussian Riesz transform
\[
 R_2\deq D^2\cN^{-1}\, .
\]
A variational Calder\'on--Zygmund decomposition, built from an obstacle
problem for $\cN$, gives a dimension-free weak-type $(1,1)$ estimate for the
Hilbert--Schmidt norm of $R_2f$.  This supplies the bound at the endpoint
$q=1$ of the $L^q$ scale, where strong bounds fail.  Interpolation with the
$L^2$ bound yields $L^q$ estimates of order $(q-1)^{-1}$ for $1<q\leq2$.
The shifted-transform identity, verified on each Wiener chaos, transfers
these estimates to the shifted second-order transform $(\cN+1)^{-1}D^2$.
Duality with $q=p'$ then proves \cref{thm:main}.  Working directly with
$R_2$ avoids the second factor of $(q-1)^{-1}$ that would arise from
iterating first-order estimates.

Earlier weak-type $(1,1)$ estimates for second-order Gaussian
Riesz transforms control individual coordinates with constants that may
depend on $d$
\cite{GarciaCuervaMauceriSjogrenTorrea1999,CasarinoCiattiSjogren2021}.
Our estimate controls the full Hilbert--Schmidt norm with a universal
constant.

The variational Calder\'on--Zygmund decomposition adapts the
obstacle-problem method of Ouyang, Spector, and Stockdale
\cite{OuyangSpectorStockdale2026}, who proved a dimension-free weak-type
$(1,1)$ estimate for the Euclidean vector Riesz transform.  Our construction
works directly with the Gaussian Dirichlet form.

Our proof also gives the following extension without any dependence on the target
dimension.

\begin{corollary}[Finite-dimensional Hilbert targets]
\label{cor:Hilbert-main}
Let $\cH$ be a finite-dimensional real Hilbert space, let $p\geq2$, and let
$V=(V_i)_{i=1}^d$ with
$V_i\in W^{1,p}(\gamma_d;\cH)$.  Then
\begin{equation}
 \norm{\delta V}_{L^p(\gamma_d;\cH)}
 \leq \sqrt p\,\norm{\E V}_{\cH^d}
      +Cp\,
       \norm{DV}_{L^p(\gamma_d;\cH^{d\times d})}\, .
 \label{eq:Hilbert-main}
\end{equation}
One may take $C=48$ in the second term.
\end{corollary}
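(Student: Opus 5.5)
The plan is to rerun the proof of \cref{thm:main} with $\cH$-valued functions and check that every constant stays independent of $\dim\cH$. Fix an orthonormal basis $(e_k)_{k=1}^m$ of $\cH$ and write $V_i=\sum_k V_i^k e_k$. The divergence acts coordinatewise, so $\delta V=\sum_k(\delta V^k)e_k$. Computing coordinate by coordinate is useless, since summing $m$ scalar bounds would lose a factor depending on $m$. Instead I would treat the pair of indices $(i,k)$ as a single index. Then $\cH$-valued objects become scalar objects indexed over $\R^d\otimes\cH$, and all the Hilbert--Schmidt norms become Hilbert--Schmidt norms on the enlarged index set.

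\textbf{The mean term.} Split $V=\E V+(V-\E V)$. For the constant part, $\delta(\E V)(x)=\sum_k\ip{x}{a_k}e_k$ with $a_k\in\R^d$. This is an $\cH$-valued Gaussian linear form, so it equals $Ag$ for a linear map $A$ and a standard Gaussian vector $g$. By Gaussian rotation invariance, $\norm{Ag}_{\cH}$ is dominated by $\norm{A}_{\HS}\,|g'|$ in the sense that
\[
 \E\norm{Ag}_{\cH}^p\leq \norm{A}_{\HS}^p\,\E\Bigl(\sum_j\lambda_j g_j^2\Bigr)^{p/2},
 \qquad \sum_j\lambda_j=1 .
\]
Convexity of $t\mapsto t^{p/2}$ bounds the right-hand expectation by $\E|g_1|^p=m_p^p$. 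Since $m_p\leq\sqrt p$ and $\norm{A}_{\HS}=\norm{\E V}_{\cH^d}$, this gives the first term of \eqref{eq:Hilbert-main}. I expect this to be the same argument that produced the $\sqrt p$ in the scalar case.

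\textbf{The mean-zero part.} For $W=V-\E V$, the scalar proof uses duality. One tests $\delta W$ against $f\in L^{p'}(\gamma_d;\cH)$ and writes
\[
 \E\ip{\delta W}{f}=\E\ip{DW}{(\cN+1)^{-1}D^2\cN^{-1}(\cdots)f},
\]
using the shifted-transform identity chaos by chaos. Every operator here acts componentwise on $\cH$, so these identities carry over verbatim. What remains is an $L^{q}(\gamma_d;\cH)\to L^q(\gamma_d;\cH\otimes\HS_d)$ bound of order $(q-1)^{-1}$ for the second-order transform $D^2\cN^{-1}$.

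\textbf{The main obstacle: the weak-type $(1,1)$ bound.} The $L^2$ bound is immediate by orthogonality, since it is a spectral identity applied in each coordinate. So the real content is the weak-type $(1,1)$ estimate for $\cH$-valued $f$. I would re-examine the variational Calder\'on--Zygmund decomposition and confirm that it only uses two things:
\begin{enumerate}[label=(\roman*)]
 \item the obstacle problem for the Gaussian Dirichlet form applied to the scalar function $\norm{f}_{\cH}$, or its mean-zero analog, to produce the good/bad sets;
 \item Hilbert-space orthogonality when estimating the good part in $L^2$.
\end{enumerate}
If the obstacle is built from $\norm f_{\cH}$ and the decomposition $f=g+b$ is defined by multiplying $f$ by scalar cutoffs or projections, then all estimates remain valid in $\cH$. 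The good part is controlled by the $\cH$-valued $L^2$ bound, and the bad part by scalar quantities. After that, Marcinkiewicz interpolation for $\cH$-valued sublinear operators (through the norm) gives the $L^q$ bound with the same constant. Duality with $q=p'$ then yields $Cp$ with $C=48$ unchanged.
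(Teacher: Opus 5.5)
Your mean-term argument is correct. It is also more elementary than the paper's appeal to the Lata\l a--Oleszkiewicz comparison. Your duality reduction is the right one, but the operator acting on the test function is simply $(\cN+1)^{-1}D^2=[\Id+(\cN+1)^{-1}]R_2$, as in \cref{lem:centered-duality}, with no extra $\cN^{-1}$.

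The gap is the step you flag as the main obstacle: an $\cH$-valued weak-type $(1,1)$ bound for $R_2$. The mechanism you propose for it does not work. The variational decomposition is not a cutoff decomposition. Its whole point is that the bad part is \emph{exactly} $\cN u$ for a scalar $u\geq0$, so that $R_2(\cN u)=D^2u$ vanishes a.e.\ off $\Omega=\{u>0\}$. Suppose you solve the obstacle problem for $\norm f_\cH$ and then build $b$ from $f$ by scalar cutoffs, for instance $b=f\1_\Omega$ or $b=(f/\norm f_\cH)\,\cN u$. Then $b$ is supported in $\Omega$, but it is not of the form $\cN U$ with $U$ vanishing off $\Omega$. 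Since $R_2$ is nonlocal, nothing forces $R_2b$ to vanish, or even be small, on $\Omega^{\mathrm c}$. Controlling it there is precisely the kernel-regularity argument on admissible balls, whose constants depend on $d$. Running the obstacle problem coordinatewise does give $b=\cN U$. However, the measure of the exceptional set and the $L^2$ norm of the good part are then only bounded in terms of $\sum_k\norm{f_k}_{L^1}$. That sum can exceed $\norm f_{L^1(\cH)}$ by a factor $\sqrt{\dim\cH}$. So as written, your plan does not give a constant independent of $\dim\cH$.

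The missing idea is that no vector-valued endpoint estimate is needed at all. Prove the scalar bound $\norm{TF}_{L^q(\HS_d)}\leq 48(q-1)^{-1}\norm F_{L^q}$ for $T\deq(\cN+1)^{-1}D^2$. Then lift it to $F=\sum_aF_ae_a$ by Gaussian randomization, as in \cref{lem:Hilbert-extension}:
\begin{itemize}
\item $F_g\deq\sum_ag_aF_a$ is a scalar function on $\R^d$.
\item $TF_g=\sum_ag_a\,TF_a$, because $T$ does not act on $g$.
\item $\E_g\abs{F_g(x)}^q=m_q^q\norm{F(x)}_\cH^q$.
\item For $q\leq2$, $\abs{TF(x)}_{\cH^{d\times d}}^q\leq m_q^{-q}\,\E_g\abs{TF_g(x)}_{\HS_d}^q$.
\end{itemize}
The last inequality follows from your own Gram-matrix diagonalization, now using concavity of $t\mapsto t^{q/2}$ instead of convexity. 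Integrating in $x$, the two factors $m_q$ cancel and the constant $48/(q-1)$ is unchanged. Your duality step with $q=p'$ then gives $48(p-1)\leq48p$.

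Your opening remark about merging the $\cH$ index with another index points in this direction. The $\cH$ index, however, must be absorbed by an auxiliary Gaussian vector on which $T$ does not act, not by the base space $\R^d$.
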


Iterating the $L^q$ estimate for $R_2$ gives sharp bounds for all even-order
Gaussian Riesz transforms.  For $k\geq1$, let $D^kf$ denote the full
$k$-tensor of ordered derivatives,
with the Hilbert--Schmidt norm on $(\R^d)^{\otimes k}$.

\begin{corollary}[Sharp even-order Riesz transform estimates]
\label{cor:even-Riesz}
For every integer $r\geq1$, there are constants $0<c_r\leq C_r<\infty$ such
that, for every $d\geq1$ and $1<q\leq2$,
\begin{equation*}
 \frac{c_r}{(q-1)^r}
 \leq
 \norm{D^{2r}\cN^{-r}}_{L^q(\gamma_d)
   \to L^q(\gamma_d;(\R^d)^{\otimes 2r})}
 \leq\frac{C_r}{(q-1)^r}\, .
\end{equation*}
One may take $C_r=24\cdot48^{r-1}$.
The upper bound remains valid for finite-dimensional Hilbert-valued inputs,
independently of the target dimension.  Equivalently, if $p\geq2$, if
$\delta^{2r}$ denotes the iterated adjoint of $D^{2r}$, and if $A$ is a
smooth compactly supported $2r$-tensor field, then
\begin{equation}
 \norm{\cN^{-r}\delta^{2r}A}_{L^p(\gamma_d)}
 \leq C_r\,(p-1)^r\,
       \norm A_{L^p(\gamma_d;(\R^d)^{\otimes 2r})}\, ,
 \label{eq:even-Riesz-adjoint}
\end{equation}
and the order $p^r$ is optimal.
\end{corollary}

\Cref{cor:even-Riesz}, proved in \cref{sec:even-Riesz}, gives the sharp
$q\downarrow1$ estimate for the full even-order transforms.
Pisier's calculation \cite{Pisier1988} treats odd orders,
while the later higher-order estimates
\cite{GutierrezSegoviaTorrea1996,ForzaniScottoUrbina2001} do not track this
optimal dependence for the full tensor norm.  The comparison is developed
in \cref{app:related-work}.

An independent change-of-variables proof of a weaker bounded derivative
version of \cref{thm:main} is given in \cref{app:bounded-derivative}.

\subsection{The Gaussian Bernstein--Markov inequality and its consequences}
\label{sec:Bernstein-Markov-intro}

The classical inequalities of Markov and Bernstein bound the
derivative of a polynomial in terms of its degree and its size.  Markov
\cite{Markov1890} proved that every non-constant real polynomial $P$ on
$\R$ satisfies
\[
 \norm{DP}_{L^\infty([-1,1];\R)}
 \leq(\deg P)^2\,\norm P_{L^\infty([-1,1])}\, ,
\]
while Bernstein's inequality \cite{Bernstein1912} gives
\[
 \sqrt{1-x^2}\,\abs{DP(x)}
 \leq(\deg P)\,\norm P_{L^\infty([-1,1])}\, ,
 \qquad -1<x<1\, .
\]
Thus, the derivative bound depends linearly on the degree at each fixed
interior point.  At the ends of the interval, Chebyshev polynomials attain
the quadratic dependence in Markov's inequality.
Hille, Szeg\H{o}, and
Tamarkin \cite{HilleSzegoTamarkin1937} extended Markov's inequality to
$L^p[-1,1]$.  An extensive theory developed from these results; see Borwein and Erd\'elyi
\cite{BorweinErdelyi1995}.  To pass from a bounded interval to the whole
line or to $\R^d$, one must integrate polynomials against a weight, and the
Gaussian measure is the natural choice.

For the Gaussian measure on the line, Freud's inequality
\cite{Freud1971}, after rescaling, gives
\[
 \norm{DP}_{L^p(\gamma_1;\R)}
 \leq C\sqrt{\frac{\deg P}{p}}\,\norm P_{L^p(\gamma_1)}\, ,
 \qquad 1\leq p<\infty\, ,
\]
with a universal constant.  Freud subsequently treated more general
exponential weights \cite{Freud1977}.  Levin and Lubinsky
\cite{LevinLubinsky1994} proved general $L^p$ Bernstein--Markov inequalities
for such weights.  Our question is whether the Gaussian estimate
extends to polynomials on $\R^d$, with the
Euclidean norm of the full gradient on the left and a constant independent
of $d$.

An argument attributed to Maurey and Pisier by Eskenazis and Ivanisvili
\cite{EskenazisIvanisvili2020} gives
\[
 m_p\,\norm{DP}_{L^p(\gamma_d;\R^d)}
 \leq(\deg P)\,\norm P_{L^p(\gamma_d)}\, ,
 \qquad 1\leq p<\infty\, .
\]
This has the desired independence of $d$, but a linear dependence on the
degree. Eskenazis and Ivanisvili also raised the following conjecture:
\begin{conjecture}[Eskenazis--Ivanisvili {\cite[Question~3]{EskenazisIvanisvili2020}}]
\label{conj:Eskenazis-Ivanisvili}
For every $1\leq p<\infty$, there is a constant $C_p<\infty$, depending only on
$p$, such that
\[
 \norm{DP}_{L^p(\gamma_d;\R^d)}
 \leq C_p\sqrt{\deg P}\,\norm P_{L^p(\gamma_d)}
\]
for every $d\geq1$ and every non-constant polynomial $P:\R^d\to\R$.
\end{conjecture}

Eskenazis and Ivanisvili proved the weaker bound
\[
 \norm{DP}_{L^p(\gamma_d;\R^d)}
 \leq C_p\,(\deg P)^{\frac12+\frac1\pi
 \arctan\bigl(\frac{\abs{p-2}}{2\sqrt{p-1}}\bigr)}
 \,\norm P_{L^p(\gamma_d)}\, .
\]
Their proof first estimates $\cN^{1/2}P$ using complex
hypercontractivity, then applies Meyer's first-order comparison to obtain
the gradient bound.

More recently, Kosov~\cite{Kosov2026} proved the bound with
$\sqrt{\deg P}$ for even integers $p\geq4$, with constant $3\sqrt p$.
For all real $p\geq4$, he obtained a bound with degree factor
$(\deg P)^{1/2+\theta_p}$, where $0\leq\theta_p\leq2/(3p)$ and
$\theta_p=0$ when $p$ is even.  At even exponents, his proof uses
Gaussian integration by parts and an $L^2$ polynomial estimate applied to
integer powers of polynomials.

\Cref{thm:Bernstein-Markov} proves \cref{conj:Eskenazis-Ivanisvili} for
every $p\geq2$ and also identifies the optimal constant dependence on $p$ in this
range.

Our proof in \cref{sec:proof-Bernstein-Markov} estimates the
gradient directly through complex Gaussian rotations.  A moment recursion
in the Fock space gives a comparison for homogeneous holomorphic
polynomials, valid for every real $p\geq2$.  Homogenization using
auxiliary Gaussian variables extends it to the even and odd parts of an
arbitrary polynomial, and Cauchy's formula gives the gradient estimate.
This proof is independent of the divergence inequality in \cref{thm:main}.

For $1<p<2$, the dimension-free estimate with
$\sqrt{\deg P}$ remains open.  The product polynomials
$P_N(x)\deq\prod_{j=1}^Nx_j$ show that its constant must be at least of
order $(p-1)^{-1/2}$.  The expected estimate is
\[
 \norm{DP}_{L^p(\gamma_d;\R^d)}
 \leq C\sqrt{\frac{\deg P}{p-1}}\,
       \norm P_{L^p(\gamma_d)}\, .
\]
This also rules out the original conjecture for $p = 1$. Both the lower bound
and this limitation of our proof are discussed in \cref{sec:BM-tightness}.

The two main inequalities can be combined when each component of
a vector field is a polynomial.  Writing this field as
$P\deq (P_1,\dots,P_d)$, we obtain the following estimate for its Gaussian
divergence.

\begin{corollary}[Polynomial vector fields]
\label{cor:polynomial-divergence}
Let $n\geq1$, $p\geq2$, and $P=(P_1,\dots,P_d):\R^d\to\R^d$,
where each $P_i$ is a polynomial of degree at most $n$.
Then
\begin{equation}
 \norm{\delta P}_{L^p(\gamma_d)}
 \leq \sqrt p\,\abs{\E P}
      +96\sqrt{2e}\,p\sqrt n\,
       \norm{P-\E P}_{L^p(\gamma_d;\R^d)}\, .
 \label{eq:polynomial-divergence}
\end{equation}
In particular,
\[
 \norm{\delta P}_{L^p(\gamma_d)}
 \leq (1+192\sqrt{2e}) \,p\sqrt n\,\norm P_{L^p(\gamma_d;\R^d)}\, .
\]
For $n=0$, one has
$\norm{\delta P}_{L^p(\gamma_d)}=m_p\,\abs P\leq\sqrt p\,\abs P$.
\end{corollary}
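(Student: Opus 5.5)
The plan is to combine \cref{thm:main} with \cref{thm:Bernstein-Markov}. First, apply \eqref{eq:main} to the polynomial field $P$. Polynomials lie in $W^{1,p}(\gamma_d;\R^d)$, so this gives
\[
 \norm{\delta P}_{L^p(\gamma_d)}\leq\sqrt p\,\abs{\E P}+48p\,\norm{DP}_{L^p(\gamma_d;\HS_d)}\, .
\]
Since $DP=D(P-\E P)$, it then suffices to show
\[
 \norm{DQ}_{L^p(\gamma_d;\HS_d)}\leq 2\sqrt{2en}\,\norm Q_{L^p(\gamma_d;\R^d)}
\]
for $Q\deq P-\E P$. The lower bound $m_p\geq1$ for $p\geq2$ comes from the monotonicity of $L^p$ norms. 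With it, this bound yields the constant $48\cdot2\sqrt{2e}=96\sqrt{2e}$.

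The main obstacle is that \cref{thm:Bernstein-Markov} is stated for scalar polynomials, while here we need the vector-valued version with the Hilbert--Schmidt norm on the left and the Euclidean norm on the right. I would linearize the norm with an auxiliary Gaussian vector $g\sim\gamma_d$, independent of $x$. For fixed $x$, set $S(x,g)\deq\ip{g}{Q(x)}=\sum_i g_iQ_i(x)$. This is a scalar polynomial on $\R^{2d}$ of degree at most $n+1$, which is not good enough. So instead I apply the theorem in $x$ only, for each fixed $g$: $x\mapsto S(x,g)$ has degree at most $n$. This gives
\[
 \E_x\abs{D_xS}^p\leq\Bigl(\frac{2\sqrt{2en}}{m_p}\Bigr)^p\E_x\abs{S}^p\, .
\]
Now take $\E_g$ of both sides. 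On the right, $\E_g\abs{\ip gQ}^p=m_p^p\abs Q^p$. On the left, $D_xS=(DQ)^{\top}g$, and the key fact is the lower bound $\E_g\abs{M^{\top}g}^p\geq m_p^p\,\abs{M}_{\HS}^p$ for a matrix $M$. Granting that, the factors $m_p^p$ cancel and we get the desired estimate without the $1/m_p$ gain, which suffices.

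To prove the lower bound on $\E_g\abs{M^{\top}g}^p$, write $\abs{M^{\top}g}^2=\sum_j\sigma_j^2g_j^2$ in singular coordinates. By convexity of $t\mapsto t^{p/2}$ for $p\geq2$, Jensen's inequality with weights $\sigma_j^2/\sum_k\sigma_k^2$ gives
\[
 \E\Bigl(\sum_j\sigma_j^2g_j^2\Bigr)^{p/2}\leq\abs M_{\HS}^p\,m_p^p\, ,
\]
which is the wrong direction. A lower bound also holds, since $\E\abs{M^\top g}^p\geq(\E\abs{M^\top g}^2)^{p/2}=\abs M_{\HS}^p$. So the lower bound only holds with $1$ in place of $m_p^p$. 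This loses a factor $m_p$, and I expect it is exactly where the extra $p$ comes from in the constant $p\sqrt n$. Redoing the count with this weaker bound, the right side is $m_p^p\abs Q^p$ times $(2\sqrt{2en}/m_p)^p$, so the $m_p$ factors cancel and $\norm{DQ}_{L^p(\gamma_d;\HS_d)}\leq 2\sqrt{2en}\,\norm Q_{L^p(\gamma_d;\R^d)}$ still follows. The bound closes.

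For the second display, use $\abs{\E P}\leq\norm P_{L^p}$ and $\norm{P-\E P}_{L^p}\leq2\norm P_{L^p}$. Together with $\sqrt p\leq p\sqrt n$, this gives $(1+192\sqrt{2e})p\sqrt n$. For $n=0$, $P$ is a constant vector $a$. Then $\delta P=\ip xa$, and its $L^p$ norm is $m_p\abs a$. Finally, $m_p\leq\sqrt p$, which follows from standard Gaussian moment bounds (it is equality at $p=2$, and for larger $p$ it follows from the Gamma function formula).
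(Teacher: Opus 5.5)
Your argument is correct and is the paper's own proof. You apply \cref{thm:main}, then for each fixed auxiliary Gaussian $g$ apply the scalar Bernstein--Markov bound to $x\mapsto\ip{g}{P(x)-\E P}$ and average in $g$, using the Jensen lower bound $\abs{M}_{\HS}\le(\E_g\abs{M^{\mathsf T}g}^p)^{1/p}$, so that the scalar factor $m_p^{-1}$ cancels against $\E_g\abs{\ip gQ}^p=m_p^p\abs Q^p$. You were right to discard the version with $m_p^p$, which is false. In a clean write-up, drop that detour and the unneeded remark that $m_p\ge1$. Also correct the slip ``equality at $p=2$'': in fact $m_2=1<\sqrt2$, and the $n=0$ case follows from $m_p\le\sqrt{p-1}\le\sqrt p$, which comes from the moment recurrence.
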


To apply \cref{thm:main}, the proof first uses the scalar
Bernstein--Markov inequality on $\ip{G}{P-\E P}$, where $G$ is an
auxiliary standard Gaussian vector.  Gaussian randomization then gives
\[
 \norm{DP}_{L^p(\gamma_d;\HS_d)}
 \leq 2\sqrt{2en}\,\norm{P-\E P}_{L^p(\gamma_d;\R^d)}\, .
\]
In this passage to a vector-valued polynomial, the scalar factor
$m_p^{-1}\asymp p^{-1/2}$ is canceled by the factor $m_p$ from the
randomization.  Multiplying this derivative bound by the factor $p$ in
\cref{thm:main} gives the order $p\sqrt n$ in the corollary.
The derivative bound above cannot retain an additional factor $p^{-1/2}$
uniformly in $d$: for $P(x)=x$, its degree is $1$, its mean is zero, and
\[
 \norm{DP}_{L^p(\gamma_d;\HS_d)}=\sqrt d\, ,
 \qquad
 \norm P_{L^p(\gamma_d;\R^d)}\asymp\sqrt{d+p}\, .
\]
Taking $d=\lceil p\rceil$ would force $\sqrt p\lesssim1$ if that extra
factor were available.  This explains the loss in the derivative estimate;
we do not claim that the $p$ dependence of the corollary itself
is optimal.

Further comparisons with Gaussian Riesz transform theory and
concentration estimates are collected in \cref{app:related-work}.

\paragraph{AI usage.}
In our prior work~\cite{Chen+26PicardHMC}, we established \cref{thm:main} but with constant $Cp^2$ in front of the second term, rather than the optimal constant $Cp$; see Lemma C.3 therein. Also, by applying the result of~\cite{EskenazisIvanisvili2020}, we established \cref{cor:polynomial-divergence} but with constant $C p^2 n$ in front of the second term, rather than $Cp\sqrt n$; see Lemma C.8 therein. This work arose from our efforts to find refined versions of these inequalities.
The original proof ideas were found through interactions with GPT-5.6 Sol. The authors recast the arguments, prepared the manuscript, and take full responsibility for its contents.

\section{Preliminaries}
\label{sec:preliminaries}

We work on $\R^d$ with $d$ finite and arbitrary.  This section collects
the facts about $D$, $\delta$, $\cN$, and the Wiener chaoses used in the
proofs.  Further analytic material appears in \cref{app:analytic-tools}.
The complex Gaussian measures and the Fock space are introduced in
\cref{sec:Fock} for the proof of \cref{thm:Bernstein-Markov}.

\subsection{Notation and conventions}
\label{sec:notation}

Throughout, $\gamma_m$ denotes the standard Gaussian measure on $\R^m$,
and $X$, $Y$, $G$ denote
standard Gaussian vectors of the appropriate dimensions.  Independence is
specified when these vectors are introduced; a rotated pair may depend on
the original pair. The letters $C$ and $c$ denote positive universal constants
whose values may change from line to line; a dependence on a parameter is
indicated by a subscript, as in $C_r$.

\paragraph{Vector-valued spaces.}
If $\cH$ is a finite-dimensional real Hilbert space, $L^p(\gamma_d;\cH)$ is the
Bochner space of $\cH$-valued functions with norm
$\norm F_{L^p(\gamma_d;\cH)}\deq(\E\norm F_\cH^p)^{1/p}$; inside proofs we
write $\norm F_{L^p(\cH)}$ or $\norm F_p$ when the measure and target are
clear.  We write $\cH^d$ and $\cH^{d\times d}$ for the
spaces of $d$-tuples and of $d\times d$ arrays of elements of $\cH$, with
the norms
\[
 \norm{(F_j)}_{\cH^d}^2\deq\sum_{j=1}^d\norm{F_j}_\cH^2\, ,
 \qquad
 \norm{(A_{ij})}_{\cH^{d\times d}}^2\deq\sum_{i,j=1}^d\norm{A_{ij}}_\cH^2\, .
\]
In particular $\HS_d\deq\R^{d\times d}$ carries the Hilbert--Schmidt norm
$\abs A_{\HS}^2\deq\sum_{i,j}A_{ij}^2$. The derivative $DF\deq(D_jF)_{j=1}^d$ of
a $\cH$-valued $F$ takes values in $\cH^d$, with
$\abs{DF}^2\deq\sum_j\norm{D_jF}_\cH^2$, and the derivative
$DV\deq(D_jV_i)_{i,j=1}^d$ of a field $V=(V_i)_{i=1}^d$ with $\cH$-valued
components takes values in $\cH^{d\times d}$.  For $k\geq1$, $D^kf$ is the
full $k$-tensor of ordered partial derivatives, measured in the
Hilbert--Schmidt norm of $(\R^d)^{\otimes k}$.  The operators $D$, $\delta$,
$\cN$, the Ornstein--Uhlenbeck semigroup, and its resolvents act on
$\cH$-valued functions componentwise in an orthonormal basis of $\cH$; the
resulting operators do not depend on the basis.

For $1\leq p<\infty$, both $C_{\rm c}^\infty(\R^d;\cH)$ and the $\cH$-valued
polynomials are dense in $L^p(\gamma_d;\cH)$.  If $1<p<\infty$ and $p'\deq p/(p-1)$ is the dual exponent, the dual of
$L^{p'}(\gamma_d;\cH)$ is $L^p(\gamma_d;\cH)$ under the pairing $\E\ip FG_\cH$; this is
elementary because $\dim \cH<\infty$.

\paragraph{Sobolev spaces.}
Let $1\leq p<\infty$.  A function $u\in L^p(\gamma_d;\cH)$ belongs to
$W^{1,p}(\gamma_d;\cH)$ if its distributional gradient on $\R^d$ is a function
with $Du\in L^p(\gamma_d;\cH^d)$, and then
$\norm u_{W^{1,p}}\deq\norm u_p+\norm{Du}_p$.
Polynomials, $C_{\rm c}^\infty$ functions, and bounded smooth functions with
bounded derivatives are all dense in $W^{1,p}(\gamma_d;\cH)$
\cite[Chapter~5]{Bogachev1998}, so $W^{1,p}(\gamma_d;\cH)$ is also the
completion of any of these classes under $\norm\cdot_{W^{1,p}}$.
Higher-order spaces $W^{k,p}$
are defined in the same way using all derivatives up to order $k$.  Because $\gamma_d$ is a probability measure,
$W^{1,p}\subset W^{1,r}$ for $r\leq p$; in particular every field in
\cref{thm:main} lies in $W^{1,2}$.  For $V\in W^{1,p}(\gamma_d;\cH^d)$, the
divergence is defined distributionally as
\begin{equation}\label{eq:delta-distribution}
 \ip{\delta V}{F}\deq\E\ip{V}{DF}_{\cH^d}\, ,
 \qquad F\in C_{\rm c}^\infty(\R^d;\cH)\, ,
\end{equation}
and $\delta V$ agrees with $x\cdot V-\operatorname{tr}(DV)$ for smooth $V$ by
\cref{lem:IBP} below. \Cref{lem:chaos-Sobolev} identifies this with an
$L^2$ function when $V\in W^{1,2}$.

\subsection{Hermite polynomials and Wiener chaos}
\label{sec:Hermite}

The probabilists' Hermite polynomials are
\[
 H_k(x)\deq(-1)^ke^{x^2/2}\,\frac{\dd^k}{\dd x^k}\,e^{-x^2/2}\, ,
 \qquad k\geq0\, ,
\]
so that $H_0=1$, $H_1(x)=x$, $H_2(x)=x^2-1$, and in general the leading term of $H_k(x)$ is $x^k$
plus terms of lower same-parity degrees.  For a multi-index
$\alpha\in\mathbb N^d$ put $H_\alpha(x)\deq\prod_{i=1}^dH_{\alpha_i}(x_i)$,
$\alpha!\deq\prod_i\alpha_i!$, and $\abs\alpha\deq\sum_i\alpha_i$.  With the
complex-bilinear dot product $t\cdot x\deq\sum_{i=1}^d t_ix_i$, the Hermite
polynomials are characterized by the generating function
\begin{equation}\label{eq:Hermite-generating-d}
 \exp\Bigl\{t\cdot x-\frac{t\cdot t}{2}\Bigr\}
 =\sum_{\alpha\in\mathbb N^d}\frac{t^\alpha}{\alpha!}\,H_\alpha(x)\, ,
 \qquad t,x\in\mathbb C^d\, ,
\end{equation}
which converges absolutely and locally uniformly.  Computing
$\E\exp\{t\cdot X-t\cdot t/2+s\cdot X-s\cdot s/2\}=e^{t\cdot s}$ gives
$\E[H_\alpha H_\beta]=\alpha!\,\delta_{\alpha\beta}$, and the Hermite
polynomials are complete in $L^2(\gamma_d)$
\cite[Chapter~1]{Bogachev1998}, \cite[Chapter~2]{Janson1997}.

For $n\geq0$, the $n$-th \emph{Wiener chaos} is
\[
 \mathcal C_n\deq\operatorname{span}\{H_\alpha:\abs\alpha=n\}\, ,
\]
the span of the Hermite polynomials indexed by $\abs\alpha=n$.  Thus,
\begin{equation*}
 L^2(\gamma_d)=\bigoplus_{n\geq0}\mathcal C_n
\end{equation*}
is an orthogonal direct sum.  We write $J_n$ for the orthogonal projection
onto $\mathcal C_n$ and $u_n\deq J_nu$, so that $u=\sum_{n\geq0}u_n$ is the
\emph{chaos expansion} of $u\in L^2(\gamma_d)$; $J_0u=\E u$, and the
elements of $\mathcal C_1$ are the centered linear functions
$x\mapsto\ip ax$.  Since $H_\alpha$ is $x^\alpha$ plus terms of lower
degree, the polynomials of degree at most $n$ form exactly the space
$\bigoplus_{k\leq n}\mathcal C_k$; in particular every polynomial has a
finite chaos expansion, and a polynomial lying in a single $\mathcal C_n$ need
not be homogeneous as a polynomial.  For $\cH$-valued functions, the chaos
expansion is taken componentwise, and $\mathcal C_n(\cH)\deq\mathcal C_n\otimes \cH$.

Differentiating \eqref{eq:Hermite-generating-d} in $x_i$ and in $t_i$ gives
\begin{equation}\label{eq:Hermite-D-delta}
 D_iH_\alpha=\alpha_iH_{\alpha-e_i}\, ,
 \qquad
 x_iH_\alpha-D_iH_\alpha=H_{\alpha+e_i}\, ,
\end{equation}
where $e_i$ is the $i$-th unit vector and $H_{\alpha-e_i}\deq0$ when
$\alpha_i=0$.  In the notation of the introduction the second identity reads
$\delta(H_\alpha e_i)=H_{\alpha+e_i}$.  Consequently
\begin{equation}\label{eq:chaos-shifts}
 D:\mathcal C_n\to\mathcal C_{n-1}\otimes\R^d\, ,
 \qquad
 \delta:\mathcal C_n\otimes\R^d\to\mathcal C_{n+1}\, ,
 \qquad
 \cN H_\alpha=\delta DH_\alpha=\abs\alpha\,H_\alpha\, .
\end{equation}
Thus \emph{$D$ lowers and $\delta$ raises the chaos order by one}, and
$\cN$ is diagonal in the chaos decomposition.

Gaussian integration by parts gives the adjoint relation between $D$ and
$\delta$. The proof is standard and thus will be omitted. See \cite[Section~1.3]{Nua06Malliavin} for the infinite-dimensional
statement.

\begin{lemma}[Gaussian integration by parts]\label{lem:IBP}
Let $F,G\in C^1(\R^d)$ be such that $F$, $G$, $DF$, and $DG$ have at most
polynomial growth.  Then, for $1\leq i\leq d$,
\begin{equation}\label{eq:IBP}
 \E[(D_iF)\,G]=\E\bigl[F\,(x_iG-D_iG)\bigr]\, .
\end{equation}
Consequently $\E\ip{DF}{V}=\E[F\,\delta V]$ for such $F$ and for every
$C^1$ vector field $V$ with $V$ and $DV$ of polynomial growth.  The identity
\eqref{eq:IBP} remains true when $1<p<\infty$, $F\in W^{1,p}(\gamma_d)$, and
$G\in C^1(\R^d)$ with $G$ and $DG$ of polynomial growth.
\end{lemma}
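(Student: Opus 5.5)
The statement to prove is \cref{lem:IBP}, and I would proceed in three stages.

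\emph{Step 1: the one-dimensional identity.} By Fubini, it suffices to handle the $x_i$ integral with the other coordinates frozen. For $F,G\in C^1(\R)$ of polynomial growth, and with $\varphi(t)=(2\pi)^{-1/2}e^{-t^2/2}$, integrate by parts on $[-R,R]$:
\[
 \int_{-R}^R F'G\varphi\,\dd t=\bigl[FG\varphi\bigr]_{-R}^{R}-\int_{-R}^R F\,(G\varphi)'\,\dd t\, .
\]
Since $(G\varphi)'=(G'-tG)\varphi$, the last term is $\int F(tG-G')\varphi$. The boundary term tends to $0$ as $R\to\infty$, because polynomial growth is beaten by $e^{-R^2/2}$. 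Each integrand is dominated by a polynomial times $\varphi$, so dominated convergence lets $R\to\infty$. In $d$ dimensions, a polynomial bound in $\abs x$ makes every integrand $\gamma_d$-integrable, so Fubini applies and gives \eqref{eq:IBP}.

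\emph{Step 2: the divergence identity.} Apply \eqref{eq:IBP} with $G=V_i$ and sum over $i$. This gives
\[
 \E\ip{DF}{V}=\E\Bigl[F\sum_i(x_iV_i-D_iV_i)\Bigr]=\E[F\,\delta V]\, .
\]

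\emph{Step 3: Sobolev $F$.} Let $F\in W^{1,p}$. By the density stated in \cref{sec:notation}, choose $F_k\in C_{\rm c}^\infty$ with $F_k\to F$ in $W^{1,p}(\gamma_d)$. The identity holds for each $F_k$ by Step 1. The functions $G$ and $x_iG-D_iG$ have polynomial growth, hence lie in $L^{p'}(\gamma_d)$, since Gaussian moments of all orders are finite. H\"older's inequality then shows that both sides are continuous under $W^{1,p}$ convergence, and we pass to the limit.

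The only genuinely delicate point is this last density step. One must make sure it is convergence in $W^{1,p}(\gamma_d)$, not in the unweighted space, that is being used, and that the test functions $G$ land in $L^{p'}$. Both are immediate from the density statement and polynomial growth.
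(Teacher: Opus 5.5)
Your proposal is correct, and it is the standard argument the paper has in mind; the paper omits the proof and points to \cite[Section~1.3]{Nua06Malliavin}. Your steps are: one-dimensional integration by parts on $[-R,R]$, with the boundary term killed by the Gaussian factor; Fubini and dominated convergence via polynomial growth; summation over $i$ for the divergence identity; and approximation in $W^{1,p}(\gamma_d)$, using that $G$ and $x_iG-D_iG$ lie in $L^{p'}(\gamma_d)$ because $p'<\infty$. Nualart instead obtains the two-function identity by applying $\E[D_iH]=\E[x_iH]$ to the product $H=FG$, which is equivalent to your direct computation.
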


\subsection{The gradient, the divergence, and the Ornstein--Uhlenbeck operator}
\label{sec:D-delta-N}

The chaos expansion gives the following descriptions of $D$, $\delta$,
and $W^{1,2}(\gamma_d)$.  These statements apply componentwise to
$\cH$-valued functions.  By the density result in \cref{sec:notation}, the
definition of $W^{1,2}(\gamma_d)$ by weak derivatives agrees with the closure
of smooth functions used in the references.

\begin{lemma}[Chaos description of $D$, $\delta$, and $W^{1,2}$]
\label{lem:chaos-Sobolev}
\begin{enumerate}[label=\textup{(\roman*)}]
\item Let $u\in W^{1,2}(\gamma_d)$.  Then, the chaos projections and gradient satisfy $J_{n-1}(D_iu)=D_i(J_nu)$ for every $n\geq1$ and every
$i$, and
\begin{equation}\label{eq:Du-chaos}
 \norm{Du}_{L^2(\gamma_d;\R^d)}^2
 =\sum_{n\geq1}n\,\norm{u_n}_{L^2(\gamma_d)}^2\, .
\end{equation}
Conversely, every $u\in L^2(\gamma_d)$ with
$\sum_nn\,\norm{u_n}_{L^2(\gamma_d)}^2<\infty$ belongs to $W^{1,2}(\gamma_d)$.  In
particular, the Gaussian Poincar\'e inequality
\begin{equation}\label{eq:Poincare}
 \norm{u-\E u}_{L^2(\gamma_d)}\leq\norm{Du}_{L^2(\gamma_d;\R^d)}\, ,
 \qquad u\in W^{1,2}(\gamma_d)\, ,
\end{equation}
holds, and it holds componentwise for $\cH$-valued $u$.
\item Let $V\in W^{1,2}(\gamma_d;\cH^d)$.  Then, the distribution $\delta V$ of
\eqref{eq:delta-distribution} is a function in $L^2(\gamma_d;\cH)$, one has
$\E\ip{F}{\delta V}_\cH=\E\ip{DF}{V}_{\cH^d}$ for every
$F\in W^{1,2}(\gamma_d;\cH)$, and
\begin{equation}\label{eq:delta-L2}
 \norm{\delta V}_{L^2(\gamma_d;\cH)}^2
 =\norm V_{L^2(\gamma_d;\cH^d)}^2
  +\E\sum_{i,j=1}^d\ip{D_iV_j}{D_jV_i}_\cH
 \leq\norm V_{L^2(\gamma_d;\cH^d)}^2
      +\norm{DV}_{L^2(\gamma_d;\cH^{d\times d})}^2\, .
\end{equation}
\item Let $D$ be regarded as an operator on $L^2(\gamma_d)$ with domain
$W^{1,2}(\gamma_d)$.  Then, $D$ is closed, and $\delta$, with domain
\[
 \Dom(\delta)
 \deq\bigl\{V\in L^2(\gamma_d;\R^d):
   \abs{\E\ip{DF}{V}}\leq C_V\,\norm F_{L^2(\gamma_d)}
   \text{ for all }F\in W^{1,2}(\gamma_d)\bigr\}\, ,
\]
is its Hilbert space adjoint.  In particular $\delta$ is a closed operator,
$W^{1,2}(\gamma_d;\R^d)\subset\Dom(\delta)$, and for $V\in\Dom(\delta)$ the
function $\delta V$ is the distribution \eqref{eq:delta-distribution}.
\end{enumerate}
\end{lemma}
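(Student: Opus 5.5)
The plan is to reduce everything to computations on Hermite polynomials through \eqref{eq:Hermite-D-delta} and \eqref{eq:chaos-shifts}, and then pass to general functions by density and closedness.

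\emph{Part (i).} Fix $u\in W^{1,2}(\gamma_d)$ and $H_\beta$ with $\abs\beta=n-1$. I would apply \cref{lem:IBP}, which is valid for $F=u\in W^{1,2}$ and polynomial $G=H_\beta$:
\[
 \E[(D_iu)H_\beta]=\E[u(x_iH_\beta-D_iH_\beta)]=\E[uH_{\beta+e_i}]\, .
\]
Expanding in the orthogonal Hermite basis, the coefficient of $H_\beta$ in $D_iu$ equals $(\beta_i+1)$ times the coefficient of $H_{\beta+e_i}$ in $u$ (the normalizations are $\beta!$ and $(\beta+e_i)!=(\beta_i+1)\beta!$). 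By $D_iH_\alpha=\alpha_iH_{\alpha-e_i}$, this is exactly the coefficient of $H_\beta$ in $D_i(J_nu)$, so $J_{n-1}D_iu=D_iJ_nu$. Parseval then gives
\[
 \norm{Du}_2^2=\sum_n\sum_i\norm{D_iu_n}_2^2\, ,
\]
and on $\mathcal C_n$ the IBP identity gives $\sum_i\norm{D_iv}^2=\E[v\,\cN v]=n\norm v^2$; this proves \eqref{eq:Du-chaos}. For the converse, I would take the partial sums $S_N=\sum_{n\leq N}u_n$, which are polynomials. They converge in $L^2$, and their gradients are Cauchy in $L^2$ by \eqref{eq:Du-chaos}. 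Passing to the limit in the distributional definition of $Du$, tested against $C_{\rm c}^\infty$ functions with the $L^2(\gamma_d)$ pairing, shows $u\in W^{1,2}$. Poincaré then follows from $n\geq1$ on the nonconstant chaoses, and the $\cH$-valued case is obtained componentwise.

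\emph{Part (ii).} For a polynomial field $V$ everything is explicit. I would first compute $\E\abs{\delta V}^2=\E\ip{D\delta V}{V}$ and use the commutation relation $D_j\delta V=V_j+\delta(D_jV)$, which is checked directly from $\delta V=\sum x_iV_i-\sum\partial_iV_i$. A second integration by parts then yields the identity in \eqref{eq:delta-L2}, and Cauchy--Schwarz on $\sum_{i,j}\ip{D_iV_j}{D_jV_i}$ gives the inequality. For general $V\in W^{1,2}$, I would approximate by polynomial fields $V^{(k)}\to V$ in $W^{1,2}$, using the density stated in \cref{sec:notation}. Applying \eqref{eq:delta-L2} to differences shows that $\delta V^{(k)}$ is Cauchy in $L^2$. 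Its limit represents the distribution \eqref{eq:delta-distribution}, since $\E\ip{DF}{V^{(k)}}\to\E\ip{DF}{V}$. The duality identity extends to $F\in W^{1,2}$ by a second density argument, because both sides are continuous in $F$ for the $W^{1,2}$ norm.

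\emph{Part (iii).} Closedness of $D$ follows from the weak definition: if $u_k\to u$ and $Du_k\to w$ in $L^2$, then testing against $C_{\rm c}^\infty$ functions gives $Du=w$. Since $D$ is densely defined and closed, its adjoint $D^*$ is closed, and $\Dom(D^*)$ is by definition the set displayed in the statement. For $V\in\Dom(D^*)$, the function $D^*V$ satisfies $\E[F\,D^*V]=\E\ip{DF}V$ for every $F\in C_{\rm c}^\infty\subset W^{1,2}$, so it coincides with the distribution \eqref{eq:delta-distribution}. Part (ii) shows $W^{1,2}\subset\Dom(\delta)$.

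The main obstacle is the approximation step in (ii), specifically the claim that the $L^2$ limit of $\delta V^{(k)}$ agrees with the distributional $\delta V$ for test functions merely in $W^{1,2}$ rather than smooth. This is handled by the two-step density argument above, extending from $C_{\rm c}^\infty$ test functions to $W^{1,2}$ ones by continuity.
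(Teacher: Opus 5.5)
Your proposal is correct and follows the standard argument that the paper does not reproduce but delegates to Nualart: you identify $J_{n-1}D_iu$ through Hermite coefficients via Gaussian integration by parts for (i), and you use the commutation relation $D_j\delta V=V_j+\delta(D_jV)$ on polynomial fields followed by $W^{1,2}$-density to get the second-moment identity in (ii). For (iii) you use the Hilbert-space adjoint of the densely defined operator $D$, and the density steps you single out (identifying the $L^2$ limit of $\delta V^{(k)}$ with the distribution, and extending the duality to $F\in W^{1,2}$ by continuity in the $W^{1,2}$ norm) are handled correctly.
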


For (i) and (ii) see \cite[Sections~1.2--1.3]{Nua06Malliavin}, where the
identity in (ii) is the formula for the second moment of the Skorokhod
integral; (iii) says that $\delta$ is the adjoint of the closed operator
$D$, see \cite[Section~1.3]{Nua06Malliavin}.

Since $\cN H_\alpha=\abs\alpha H_\alpha$ by \eqref{eq:chaos-shifts}, we
\emph{define} $\cN$ on $L^2(\gamma_d)$ spectrally:
\begin{equation*}
 \Dom(\cN)
 \deq\Bigl\{u\in L^2(\gamma_d):\sum_{n\geq0}n^2\,\norm{u_n}_2^2<\infty\Bigr\}\, ,
 \qquad
 \cN u\deq\sum_{n\geq0}nu_n\, .
\end{equation*}
Thus, $\cN$ is self-adjoint and non-negative, its spectrum is
$\{0,1,2,\dots\}$, the chaos $\mathcal C_n$ is the eigenspace for the
eigenvalue $n$, and the kernel of $\cN$ consists of the constants.  On smooth
functions with polynomially bounded derivatives, \cref{lem:IBP} shows that
$\cN u=\delta Du=-\Delta u+x\cdot Du$ is the classical Ornstein--Uhlenbeck
operator, and \cref{lem:chaos-Sobolev} identifies
$W^{1,2}(\gamma_d)=\Dom(\cN^{1/2})$ with $\norm{Du}_2=\norm{\cN^{1/2}u}_2$.
Functions of $\cN$ are defined by spectral calculus; in particular, for
$a>0$,
\begin{equation}\label{eq:resolvent-spectral}
 (\cN+a)^{-1}u\deq\sum_{n\geq0}\frac{u_n}{n+a}\, ,
 \qquad
 \cN^{-1}u\deq\sum_{n\geq1}\frac{u_n}{n}\, ,
\end{equation}
so that $\cN^{-1}$ vanishes on constants and inverts $\cN$ on their
orthogonal complement:
\begin{equation}\label{eq:N-inverse-N}
 \cN^{-1}\cN u=u-\E u\, ,
 \qquad u\in\Dom(\cN)\, .
\end{equation}
The powers $\cN^{-1/2}$ and $(\cN+1)^{1/2}$ mentioned in the introduction are
defined in the same way.

\begin{lemma}[Weak formulation and commutation relations]\label{lem:N-weak}
\begin{enumerate}[label=\textup{(\roman*)}]
\item For $u,\varphi\in W^{1,2}(\gamma_d)$,
\begin{equation*}
 \E\ip{Du}{D\varphi}=\sum_{n\geq1}n\,\E[u_n\varphi_n]\, .
\end{equation*}
Consequently, if $u\in W^{1,2}(\gamma_d)$ and $h\in L^2(\gamma_d)$, then
$u\in\Dom(\cN)$ and $\cN u=h$ if and only if
\begin{equation}\label{eq:N-weak}
 \E\ip{Du}{D\varphi}=\E[h\varphi]
 \qquad\text{for all }\varphi\in W^{1,2}(\gamma_d)\, ,
\end{equation}
and it suffices to verify \eqref{eq:N-weak} for polynomials $\varphi$.
Moreover $\Dom(\cN)\subset W^{1,2}(\gamma_d)$.
\item On polynomials,
\begin{equation*}
 D_i\, \cN=(\cN+1)D_i\, ,
 \qquad
 \cN\delta=\delta(\cN+1)\, .
\end{equation*}
More generally, if $\phi$ is a bounded function on $\{0,1,2,\dots\}$ and
$u\in W^{1,2}(\gamma_d)$, then $D\phi(\cN)u=\phi(\cN+1)Du$.
\end{enumerate}
\end{lemma}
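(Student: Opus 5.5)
Both parts reduce to chaos-by-chaos computations, using \cref{lem:chaos-Sobolev} and the spectral definition of $\cN$.

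For (i), let $u,\varphi\in W^{1,2}(\gamma_d)$. By \cref{lem:chaos-Sobolev}(i), $J_{n-1}(D_iu)=D_iu_n$ and $J_{n-1}(D_i\varphi)=D_i\varphi_n$. Orthogonality of the chaoses then gives
\[
\E\ip{Du}{D\varphi}=\sum_{n\geq1}\E\ip{Du_n}{D\varphi_n}.
\]
On a single chaos, \cref{lem:IBP} applied to polynomials gives $\E\ip{Du_n}{D\varphi_n}=\E[u_n\,\delta D\varphi_n]=n\,\E[u_n\varphi_n]$, using \eqref{eq:chaos-shifts}. Absolute convergence of the series follows from Cauchy--Schwarz and \eqref{eq:Du-chaos}.

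For the equivalence, if $\cN u=h$ then $h_n=nu_n$, and the identity above gives \eqref{eq:N-weak}. Conversely, assume \eqref{eq:N-weak} holds for all polynomials $\varphi$. Testing with $\varphi=H_\alpha$, where $\abs\alpha=n$, gives $n\,\E[u_nH_\alpha]=\E[h H_\alpha]$, so $nu_n=h_n$ for every $n$. Hence $\sum n^2\norm{u_n}^2=\sum_{n\ge1}\norm{h_n}^2<\infty$, so $u\in\Dom(\cN)$ with $\cN u=h-\E h$. Testing with $\varphi=1$ forces $\E h=0$, so $\cN u=h$. Checking \eqref{eq:N-weak} on polynomials suffices because polynomials are dense in $W^{1,2}$ and both sides are continuous in $\varphi$ for the $W^{1,2}$ norm. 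Finally, $\sum n\norm{u_n}^2\le\sum n^2\norm{u_n}^2$, so the converse part of \cref{lem:chaos-Sobolev}(i) gives $\Dom(\cN)\subset W^{1,2}$.

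For (ii), linearity reduces the polynomial identities to $u=H_\alpha$ with $\abs\alpha=n$. By \eqref{eq:Hermite-D-delta}, $D_iH_\alpha=\alpha_iH_{\alpha-e_i}\in\mathcal C_{n-1}$. Therefore $D_i\cN H_\alpha=n\alpha_iH_{\alpha-e_i}=(\cN+1)D_iH_\alpha$. Similarly, for $V=H_\alpha e_i$ we have $\delta V=H_{\alpha+e_i}\in\mathcal C_{n+1}$, so $\cN\delta V=(n+1)\delta V=\delta(\cN+1)V$.

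For general bounded $\phi$ and $u\in W^{1,2}$, write $\phi(\cN)u=\sum_n\phi(n)u_n$. This lies in $W^{1,2}$ by the converse part of \cref{lem:chaos-Sobolev}(i), since $\sum n\abs{\phi(n)}^2\norm{u_n}^2\le\norm\phi_\infty^2\sum n\norm{u_n}^2<\infty$. Now compute the chaos projections of its gradient:
\[
J_{n-1}D_i\phi(\cN)u=D_i\bigl(\phi(n)u_n\bigr)=\phi(n)D_iu_n=\phi\bigl((n-1)+1\bigr)J_{n-1}D_iu.
\]
Summing over $n$ gives $D\phi(\cN)u=\phi(\cN+1)Du$ in $L^2$.

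The only delicate point is justifying the termwise interchanges: the chaos projection commuting with $D$, and convergence of the chaos series for $D\phi(\cN)u$. Both are supplied by \cref{lem:chaos-Sobolev}(i) together with the $L^2$ bound $\sum n\abs{\phi(n)}^2\norm{u_n}^2<\infty$ established above.
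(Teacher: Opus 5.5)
Your proof is correct and is essentially the chaos-by-chaos verification the paper relies on. The paper cites the Dirichlet-form characterization of $\cN$ for (i) and standard references for the commutation relations in (ii), saying only that the general statement follows chaos by chaos, whereas you derive everything directly from \cref{lem:chaos-Sobolev}(i) and the spectral definition of $\cN$. (Your density remark is not actually needed: testing against the $H_\alpha$ already gives $\cN u=h$, and hence \eqref{eq:N-weak} for every $\varphi\in W^{1,2}(\gamma_d)$.)
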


Part (i) says that $\cN$ is the generator of the Dirichlet form of
$\gamma_d$ \cite[Chapter~5]{Bogachev1998}; the commutation relations in
(ii) are in \cite[Section~1.4]{Nua06Malliavin} and
\cite[Section~2.7]{BGL14}, and the general statement
follows from them chaos by chaos.

\subsection{The Ornstein--Uhlenbeck semigroup and its resolvents}
\label{sec:OU-semigroup}

For $t\geq0$ and $F\in L^1(\gamma_d;\cH)$, define the Ornstein--Uhlenbeck semigroup by Mehler's formula,
\begin{equation*}
 P_tF(x)\deq\E_YF\bigl(e^{-t}x+\sqrt{1-e^{-2t}}\,Y\bigr)\, ,
 \qquad Y\sim\gamma_d\, .
\end{equation*}

\begin{lemma}[Ornstein--Uhlenbeck semigroup]\label{lem:OU}
Let $\cH$ be a finite-dimensional Hilbert space and $1\leq q<\infty$.
\begin{enumerate}[label=\textup{(\roman*)}]
\item $P_t$ is well defined on $L^q(\gamma_d;\cH)$ and
$\norm{P_tF}_{L^q(\gamma_d;\cH)}\leq\norm F_{L^q(\gamma_d;\cH)}$.
On scalar functions, $P_t1=1$ and $P_tF\geq0$ whenever $F\geq0$.
The same assertions hold with $q=\infty$.
\item $P_sP_t=P_{s+t}$, and $P_tF\to F$ in $L^q(\gamma_d;\cH)$ as
$t\downarrow0$.
\item If $F\in W^{1,q}(\gamma_d;\cH)$, then $P_tF\in W^{1,q}(\gamma_d;\cH)$ and
$D(P_tF)=e^{-t}P_t(DF)$.
\item $P_tH_\alpha=e^{-\abs\alpha t}H_\alpha$ for every $\alpha$.  Hence
$P_t=e^{-t\cN}$ on $L^2(\gamma_d)$, $P_t$ is self-adjoint on $L^2$, and
$J_nP_t=e^{-nt}J_n$.
\item For $a>0$, the Bochner integral
\begin{equation*}
 (\cN+a)^{-1}=\int_0^\infty e^{-at}P_t\,\dd t
\end{equation*}
defines a bounded operator on $L^q(\gamma_d;\cH)$ of norm at most $1/a$,
which agrees on $L^2$ with the spectral resolvent
\eqref{eq:resolvent-spectral}.  In particular $(\cN+1)^{-1}$ is a
contraction on every $L^q(\gamma_d;\cH)$, and
\begin{equation}\label{eq:Ma}
 M_a\deq\cN(\cN+a)^{-1}=\Id-a(\cN+a)^{-1}
\end{equation}
satisfies
$\norm{M_a}_{L^q(\gamma_d;\cH)\to L^q(\gamma_d;\cH)}\leq2$.
\end{enumerate}
\end{lemma}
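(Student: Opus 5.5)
The plan is to derive everything from Mehler's formula, taking the items in order and using Minkowski/Jensen for norms. Throughout write $Z_t(x,Y)\deq e^{-t}x+\sqrt{1-e^{-2t}}\,Y$ with $X,Y$ independent standard Gaussians.

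\emph{Item (i).} The key fact is rotation invariance: for each $t$, the pair $\bigl(X,Z_t(X,Y)\bigr)$ has the property that $Z_t(X,Y)\sim\gamma_d$. Then:
\begin{itemize}[label=--]
\item Jensen's inequality for the convex function $\norm{\cdot}_\cH^q$ gives $\norm{P_tF(x)}^q_\cH\leq\E_Y\norm{F(Z_t(x,Y))}^q_\cH$.
\item Integrating in $x$ against $\gamma_d$ turns the right side into $\E\norm{F}_\cH^q$.
\item The same Fubini argument shows that the integral defining $P_tF(x)$ converges absolutely for a.e.\ $x$. It also shows the definition does not depend on the representative of $F$, since null sets pull back to null sets under $(x,y)\mapsto Z_t(x,y)$ for $t<\infty$.
\item Positivity, $P_t1=1$, and the $q=\infty$ case are immediate from the formula.
\end{itemize}

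\emph{Item (iv), then (ii).} I would prove (iv) before the rest of (ii).
\begin{itemize}[label=--]
\item Apply $P_t$ to the generating function \eqref{eq:Hermite-generating-d}. Since $\E e^{s\cdot Y}=e^{s\cdot s/2}$, one gets
\[
P_t\exp\{s\cdot x-s\cdot s/2\}=\exp\{e^{-t}s\cdot x-e^{-2t}s\cdot s/2\}.
\]
Comparing coefficients of $s^\alpha$ gives $P_tH_\alpha=e^{-\abs\alpha t}H_\alpha$. The interchange of sum and expectation is justified by absolute convergence, using the bound $\sum_\alpha \abs{s^\alpha}\abs{H_\alpha}/\alpha!\leq$ something like $\exp(\abs s\abs x+\ldots)$, which is integrable.
\item Hence $P_t=e^{-t\cN}$ on polynomials, and by density and the contraction property on all of $L^2$. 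Self-adjointness and $J_nP_t=e^{-nt}J_n$ follow.
\item The semigroup law $P_sP_t=P_{s+t}$ holds on polynomials, hence on $L^q$ by density and the contraction property. One could also check it directly by composing Gaussians.
\item Strong continuity: for bounded continuous $F$, dominated convergence gives $P_tF\to F$ pointwise, and boundedness upgrades this to $L^q$ convergence. These functions are dense in $L^q$, and the $P_t$ are uniformly bounded, so an $\varepsilon/3$ argument finishes.
\end{itemize}

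\emph{Item (iii).} For $F$ smooth with bounded derivatives, differentiate under the expectation to get $D(P_tF)=e^{-t}P_t(DF)$. For general $F\in W^{1,q}$, take approximants $F_k\to F$ in $W^{1,q}$. Then $P_tF_k\to P_tF$ and $D P_tF_k=e^{-t}P_tDF_k\to e^{-t}P_tDF$ in $L^q$, by (i). Closedness of the weak gradient (test against $C_{\rm c}^\infty$) then identifies the limit. The one subtlety is that for $q=1$ the duality fact from \cref{sec:notation} is unavailable. This is harmless, because distributional derivatives are tested only against $C_{\rm c}^\infty$, and $L^q(\gamma_d)$ convergence implies $L^1_{\rm loc}$ convergence.

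\emph{Item (v).} Strong continuity from (ii) makes $t\mapsto e^{-at}P_tF$ measurable into $L^q$. The norm bound $\int_0^\infty e^{-at}\,\dd t=1/a$ then makes the Bochner integral converge with operator norm at most $1/a$. On $H_\alpha$ the integral equals $\int e^{-(a+\abs\alpha)t}\,\dd t\,H_\alpha=(\abs\alpha+a)^{-1}H_\alpha$. This matches \eqref{eq:resolvent-spectral} on polynomials, hence on $L^2$ by boundedness. The identity $\cN(\cN+a)^{-1}=\Id-a(\cN+a)^{-1}$ holds spectrally, and this is how $M_a$ is read on $L^q$. The triangle inequality then gives $\norm{M_a}\leq 1+a\cdot(1/a)=2$.

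I expect the main obstacle to be the measure-theoretic bookkeeping rather than any estimate: well-definedness on equivalence classes, the interchange in the generating-function computation, and the $q=1$ closure in (iii). Each is handled by the Fubini/rotation-invariance device from (i).
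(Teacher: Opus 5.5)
Your proposal is correct. The paper does not prove items (i)--(v) itself. It defers the semigroup and resolvent properties to Bakry--Gentil--Ledoux, Nualart, and Bogachev. The only step it argues is the bound on $M_a$, which it obtains exactly as you do: from \eqref{eq:Ma}, the resolvent bound $1/a$, and the triangle inequality.

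Your derivation from Mehler's formula is the standard argument behind those citations. It combines:
\begin{itemize}
\item Jensen's inequality together with the invariance of $\gamma_d$ under $(x,y)\mapsto e^{-t}x+\sqrt{1-e^{-2t}}\,y$;
\item the generating-function computation of the eigenvalues;
\item density arguments for the semigroup law, strong continuity, and the gradient identity;
\item the Laplace-transform representation of the resolvent.
\end{itemize}
This makes the lemma self-contained and shows directly that no constant depends on $d$ or $\dim\cH$, at the cost of length.

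Two details are worth writing out.
\begin{itemize}
\item Bochner measurability in (v) needs continuity of $t\mapsto P_tF$ on all of $[0,\infty)$, not only at $0$. This follows from continuity at $0$, the semigroup law, and contractivity: $\norm{P_{t+h}F-P_tF}_q\le\norm{P_hF-F}_q$ and $\norm{P_{t-h}F-P_tF}_q\le\norm{F-P_hF}_q$.
\item The interchange of sum and expectation in (iv) can be dominated explicitly via $H_k(x)=\E(x+iG)^k$. For real $z$ this gives $\sum_\alpha\abs{s^\alpha}\,\abs{H_\alpha(z)}/\alpha!\le e^{\sum_i\abs{s_i}\abs{z_i}}\,\E e^{\sum_i\abs{s_i}\abs{G_i}}$, which is integrable in $Y$ at $z=e^{-t}x+\sqrt{1-e^{-2t}}\,Y$.
\end{itemize}
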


For these semigroup and resolvent properties, see \cite[Section~2.7]{BGL14},
\cite[Section~1.4]{Nua06Malliavin}, and \cite[Chapter~1]{Bogachev1998}.  The
bound on $M_a$ follows from \eqref{eq:Ma} and the bound $1/a$ on the
resolvent.

\section{Proof of the divergence inequality and its consequences}
\label{sec:Meyer-proof}

This section proves the upper bound \eqref{eq:main} in \cref{thm:main}, its
Hilbert-valued extension \cref{cor:Hilbert-main}, and \cref{cor:even-Riesz}.
The converse assertion of \cref{thm:main} is proved in \cref{sec:tightness}.

Throughout, $p\ge2$, and $p'\deq p/(p-1)\in(1,2]$ is its dual exponent, so that $(p'-1)^{-1}=p-1$.
The central operator is the second-order Gaussian Riesz
transform, initially defined on $L^2(\gamma_d)$ by
\begin{equation*}
 R_2f\deq D^2\cN^{-1}(f-\E f)\, .
\end{equation*}
Its value is a symmetric $d\times d$ matrix, measured in the
Hilbert--Schmidt norm.

Duality explains why the second-order transform enters the divergence
inequality.  For $V\in W^{1,2}(\gamma_d;\R^d)$ and a test function
$F\in C_{\rm c}^\infty(\R^d)$, if $\E V = 0$, we establish in \cref{sec:duality}
\begin{equation}\label{eq:centered-duality-overview}
 \ip{\delta V}{F}
 =\E\sum_{i,j=1}^d
   \bigl[(\cN+1)^{-1}D_jD_iF\bigr]D_jV_i\, .
\end{equation}
Writing $T\deq(\cN+1)^{-1}D^2$, this identity says
$\delta V=T^*(DV)$ in the distributional sense.  Thus we need an $L^p$
bound for the adjoint $T^*$, or equivalently an $L^{p'}$ bound for $T$:
\[
 \abs{\ip{\delta V}{F}}
 \leq\norm{DV}_{L^p(\gamma_d;\HS_d)}
       \,\norm{TF}_{L^{p'}(\gamma_d;\HS_d)}\, .
\]
For general (non-centered) $V$, the mean is handled separately as a Gaussian linear form
and contributes the factor $\sqrt p$.

We establish the required estimate in the following order.
\begin{enumerate}
\item \emph{Variational decomposition.}  \Cref{sec:CZ} constructs a
dimension-free variational Calder\'on--Zygmund decomposition from an obstacle
problem for $\cN$.

\item \emph{The weak-type estimate at $q=1$.}  Strong $L^q$ bounds for
$R_2$ blow up as $q\downarrow1$; at the endpoint $q=1$ the correct substitute
is a weak-type $(1,1)$ estimate, and \cref{sec:weak-endpoint} uses the
decomposition to prove one whose constant does not depend on $d$.
Interpolation with the $L^2$ contraction of $R_2$ then gives the required
order $(q-1)^{-1}$ on $L^q$.

\item \emph{The shifted transform and Hilbert targets.}  \Cref{sec:shift}
uses the chaos-by-chaos identity
\[
 (\cN+1)^{-1}D^2=[\Id+(\cN+1)^{-1}]R_2\, ,
\]
followed by Gaussian randomization, to obtain the Hilbert-valued estimate
with no dependence on the target dimension.

\item \emph{Duality.}  \Cref{sec:duality} combines this estimate with the
centered duality identity \eqref{eq:centered-duality-overview} and treats
the mean separately, completing the proof of the divergence inequality.

\item \emph{Even-order transforms.}  \Cref{sec:even-Riesz} iterates the
second-order bound and proves a matching lower bound, establishing
\cref{cor:even-Riesz}.
\end{enumerate}

\subsection{A variational Calder\'on--Zygmund decomposition}
\label{sec:CZ}

We construct a Calder\'on--Zygmund decomposition of a non-negative function
$f$ at a height $\lambda$.  Such a decomposition splits $f$ into a
part bounded by $\lambda$ and a remainder that
lives on a set of measure of order $\E f/\lambda$.  A weak-type estimate
for an operator such as $R_2$ then follows from two ingredients: an $L^2$
bound for the bounded part, and control of the remainder by the measure of
the set that carries it.  In the Euclidean setting the splitting is
geometric, by dyadic cubes, and in the Gaussian setting by the admissible
balls of Mauceri and Meda \cite{MauceriMeda2007}; both constructions
produce constants that depend on $d$.

We obtain the splitting instead from an obstacle problem for $\cN$, in a similar style to the obstable problem approach in \cite{OuyangSpectorStockdale2026}.
We minimize the Dirichlet energy with source $f-\lambda$ over non-negative
functions in $W^{1,2}(\gamma_d)$ and denote a minimizer by $u$.
The associated Lagrange multiplier determines the
bounded part $\mu$, while $\cN u$ is the remainder.  Because the remainder lies in
the range of $\cN$, its second-order Riesz transform is the Hessian
$D^2u$, which vanishes on the zero set $\{u=0\}$; the height $\lambda$
keeps the set $\{u>0\}$ small.
Only the $L^2$ theory of $\cN$ enters, which is why none of the constants
depends on $d$.

\begin{proposition}[Variational Calder\'on--Zygmund decomposition]\label{prop:obstacle}
Let $f\in L^\infty(\gamma_d)$ satisfy $f\ge0$, and let $\lambda>\E f$.  Then there exist
$u\in\Dom(\cN)$ and $\mu\in L^\infty(\gamma_d)$ such that
\begin{align}
 &u\ge0\, ,\qquad 0\le\mu\le\lambda\, ,\notag\\
 &f = \cN u + \mu\, ,\qquad \E\mu=\E f\, ,\notag\\
 &(\lambda-\mu)u=0\quad\text{a.e.}\label{eq:obstacle-complementarity}
\end{align}
With $\Omega\deq\{u>0\}$, one has
\begin{equation}\label{eq:obstacle-support}
 \mu=\lambda\quad\text{a.e. on }\Omega\, ,
 \qquad
 \gamma_d(\Omega)\le\frac{\E f}{\lambda}\, ,
 \qquad
 D^2u=0\quad\text{a.e. on }\Omega^{\mathrm c}\, .
\end{equation}
\end{proposition}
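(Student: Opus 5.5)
We obtain $u$ by minimizing the obstacle functional
\[
 \mathcal E(v)\deq\tfrac12\,\E\abs{Dv}^2-\E[(f-\lambda)v]
\]
over the closed convex cone $K\deq\{v\in W^{1,2}(\gamma_d):v\ge0\}$, and then set $\mu\deq f-\cN u$.

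\textbf{Existence of a minimizer.} Coercivity is the first point to check. Write $v=\E v+(v-\E v)$. By the Poincar\'e inequality \eqref{eq:Poincare},
\[
 \abs{\E[(f-\lambda)(v-\E v)]}\le\norm f_2\norm{Dv}_2 ,
\]
while the mean part contributes $(\lambda-\E f)\E v\ge0$, with $\E v\ge0$ on $K$. Since $\lambda>\E f$, this gives
\[
 \mathcal E(v)\ge\tfrac12\norm{Dv}_2^2-\norm f_2\norm{Dv}_2+(\lambda-\E f)\,\E v .
\]
Hence minimizing sequences are bounded in $W^{1,2}$. Strict convexity of the gradient term plus weak lower semicontinuity gives a minimizer $u\in K$. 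The mean is pinned down by the linear term, so $u$ is unique.

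\textbf{Variational inequality and the identity $f=\cN u+\mu$.} Testing with $u+t\varphi$ for $\varphi\ge0$ polynomial-like (or $W^{1,2}$) and $t>0$, and with $u\pm tu$, gives
\[
 \E\ip{Du}{D\varphi}\ge\E[(f-\lambda)\varphi]\quad\text{for }\varphi\in K,
 \qquad
 \E\abs{Du}^2=\E[(f-\lambda)u].
\]
So the functional $\varphi\mapsto\E\ip{Du}{D\varphi}-\E[(f-\lambda)\varphi]$ is a nonnegative distribution $\nu$.

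The crucial step, and the main obstacle, is showing that $\nu$ is a function with $0\le\nu\le\lambda$. Then $\mu\deq\lambda-\nu$ satisfies $0\le\mu\le\lambda$, and by \cref{lem:N-weak}(i) we get $u\in\Dom(\cN)$ with $\cN u=f-\mu$.

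The bound $\nu\le\lambda$ is the Lewy--Stampacchia inequality $0\le\nu\le(\lambda-f)^+\le\lambda$. I would prove it by penalization. Solve
\[
 \cN u_\varepsilon-\beta_\varepsilon(u_\varepsilon)=f-\lambda,
 \qquad \beta_\varepsilon(s)=s^-/\varepsilon ,
\]
via monotone operator theory in $W^{1,2}$. Testing against $(\beta_\varepsilon(u_\varepsilon)-(\lambda-f)^+)^+$-type truncations, using that the Dirichlet form is Markovian ($\E\ip{D\psi(w)}{Dw}\ge0$ for monotone $\psi$), gives
\[
 0\le\beta_\varepsilon(u_\varepsilon)\le(\lambda-f)^+ .
\]
Then pass $\varepsilon\to0$. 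Alternatively, one can use the resolvent/semigroup positivity of \cref{lem:OU}. Either way, the resulting bound is uniform in $L^\infty$, and only $L^2$ theory of $\cN$ is used.

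\textbf{Complementarity.} With $\nu=\lambda-\mu$, the energy identity reads $\E[\nu u]=0$. Since $\nu,u\ge0$, this gives $(\lambda-\mu)u=0$ a.e., which is \eqref{eq:obstacle-complementarity}. Thus $\mu=\lambda$ a.e. on $\Omega$.

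\textbf{Mean of $\mu$.} Integrating $f=\cN u+\mu$ and using $\E\cN u=0$ (take $\varphi=1$) gives $\E\mu=\E f$.

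\textbf{Measure of $\Omega$.} Since $\mu\ge0$,
\[
 \lambda\gamma_d(\Omega)=\E[\mu\1_\Omega]\le\E\mu=\E f .
\]

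\textbf{Vanishing of the Hessian on $\Omega^{\mathrm c}$.} We need $D^2u$ to exist. Since $\cN u=f-\mu\in L^\infty\subset L^2$ and $u\in\Dom(\cN)$, the chaos description gives
\[
 \sum_n n^2\norm{u_n}_2^2<\infty ,
\]
and the $L^2$ identity $\norm{D^2u}_2^2=\sum_n n(n-1)\norm{u_n}_2^2$ (from \cref{lem:N-weak}(ii) applied twice) gives $u\in W^{2,2}$.

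On $\{u=0\}$ we have $Du=0$ a.e., by the standard Sobolev fact that the gradient vanishes a.e. on level sets. Applying the same fact to each $D_iu\in W^{1,2}$ on $\{D_iu=0\}\supset\{u=0\}$ a.e. gives $D^2u=0$ a.e. on $\Omega^{\mathrm c}$. These Sobolev facts are local, so they hold with respect to $\gamma_d$ because it is equivalent to Lebesgue measure.
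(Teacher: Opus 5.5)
The step you flag as crucial fails as written. You cannot test the penalized equation $\cN u_\varepsilon-\beta_\varepsilon(u_\varepsilon)=f-\lambda$ with $(\beta_\varepsilon(u_\varepsilon)-(\lambda-f)^+)^+$. Since $f$ is only bounded measurable, this function is in general not in $W^{1,2}(\gamma_d)$.

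Worse, the conclusion $\beta_\varepsilon(u_\varepsilon)\le(\lambda-f)^+$ is false at fixed $\varepsilon$, and it does not follow from Markovianity, which is the only tool you invoke. Take the two-point generator $\cN w=(b(w_1-w_2),\,b(w_2-w_1))$, which is symmetric and Markovian for the uniform measure. Let $f=(\lambda+\eta,0)$ with $0<\eta(1+b\varepsilon)<b\varepsilon\lambda$. The penalized solution then has both coordinates negative, with $u_{\varepsilon,1}=\varepsilon\,[\eta(1+b\varepsilon)-b\varepsilon\lambda]/(1+2b\varepsilon)<0$. Hence $\beta_\varepsilon(u_{\varepsilon,1})>0=(\lambda-f_1)^+$. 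The Lewy--Stampacchia bound $\nu\le(\lambda-f)^+$ holds for the limiting obstacle problem, not for this penalization at fixed $\varepsilon$.

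You only need $\nu\le\lambda$, and for that you should compare with a constant. Since $f-\lambda\ge-\lambda$ (this is where $f\ge0$ enters), the constant $-\varepsilon\lambda$ is a subsolution of the penalized equation. Testing with the admissible function $(\beta_\varepsilon(u_\varepsilon)-\lambda)^+=\varepsilon^{-1}(-\varepsilon\lambda-u_\varepsilon)^+$ gives $u_\varepsilon\ge-\varepsilon\lambda$, that is, $0\le\beta_\varepsilon(u_\varepsilon)\le\lambda$. This is exactly the paper's barrier step.

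You also have to carry out ``pass $\varepsilon\to0$'', which needs three things.
\begin{enumerate}
\item Obtain $u_\varepsilon$ by minimizing $J_\varepsilon(v)=\tfrac12\E\abs{Dv}^2+\tfrac1{2\varepsilon}\E(v^-)^2-\E[(f-\lambda)v]$. Off-the-shelf monotone operator theory does not apply, because $\E\abs{Dv}^2+\varepsilon^{-1}\E(v^-)^2$ vanishes on positive constants. Coercivity must use $\lambda>\E f$ for positive means, and the penalty with Jensen for negative means.
\item Prove a uniform $W^{1,2}$ bound on $u_\varepsilon$.
\item Identify a weak limit $\bar u$ with your obstacle minimizer $u$. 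One way: $\bar u\ge0$, and $\mathcal E(\bar u)\le\liminf_\varepsilon J_\varepsilon(u_\varepsilon)\le\mathcal E(v)$ for all $v\in K$; then use uniqueness.
\end{enumerate}
After that, $\beta_\varepsilon(u_\varepsilon)\rightharpoonup\nu$ weakly in $L^2$ gives $0\le\nu\le\lambda$.

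With these repairs your route is valid. It differs from the paper's mainly in how complementarity is obtained. The paper never minimizes over the cone: it defines $u$ directly as the limit of $u_\varepsilon$. It then proves $(\lambda-\mu)u=0$ through a product limit that needs strong $L^2$ convergence (\cref{lem:compact}). You instead get complementarity from the energy identity $\E[\nu u]=0$, so weak compactness in $W^{1,2}$ suffices.
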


\begin{proof}
We replace the constraint $u\ge0$ by a penalty with a small parameter
$\varepsilon$, minimize the penalized energy, prove a pointwise lower bound
for the minimizers, and pass to the limit $\varepsilon\downarrow0$.

Put $g\deq f-\lambda$ and $a\deq-\E g=\lambda-\E f>0$.  For $\varepsilon\in(0,1)$
define the penalty and its derivative,
\[
 B_\varepsilon(s)\deq\frac1{2\varepsilon}\,(s^-)^2\, ,
 \qquad
 B_\varepsilon'(s)\deq-\frac{s^-}{\varepsilon}\, ,
 \qquad s^-\deq\max\{-s,0\}\, .
\]
The penalty $B_\varepsilon$ is convex and continuously differentiable.
Consider on $W^{1,2}(\gamma_d)$ the
penalized energy
\begin{equation*}
 J_\varepsilon(v)
 \deq\frac12\,\E|Dv|^2+\E B_\varepsilon(v)-\E(gv)\, .
\end{equation*}

\emph{Existence of a minimizer.}
Set $b\deq\norm{f-\E f}_2=\norm{g-\E g}_2$.  By the Gaussian Poincar\'e
inequality \eqref{eq:Poincare},
\begin{equation}\label{eq:penalized-source-bound}
 \E(gv)=\E\bigl[(g-\E g)(v-\E v)\bigr]+\E g\,\E v
 \le b\,\norm{Dv}_2-a\,\E v\, .
\end{equation}
If $\E v\ge0$, then
\[
 J_\varepsilon(v)
 \ge \frac12\,\norm{Dv}_2^2-b\,\norm{Dv}_2+a\,\E v\, .
\]
If $\E v<0$, Jensen's inequality gives
$\E(v^-)^2\ge(\E v^-)^2\ge(\E v)^2$, and therefore
\[
 J_\varepsilon(v)
 \ge \frac12\,\norm{Dv}_2^2-b\,\norm{Dv}_2+\frac{(\E v)^2}{2\varepsilon}+a\,\E v\, .
\]
These two estimates prove coercivity for fixed $\varepsilon$.  The functional is convex and weakly
lower semicontinuous, so the direct method gives a minimizer $u_\varepsilon$.  Its Euler--Lagrange equation in the weak form is
\begin{equation}\label{eq:penalized-equation}
 \E\ip{Du_\varepsilon}{D\varphi}
 +\E[B_\varepsilon'(u_\varepsilon)\varphi]
 =\E[g\varphi]
 \qquad \forall\,\varphi\in W^{1,2}(\gamma_d)\, .
\end{equation}

\emph{A pointwise lower bound for $u_\varepsilon$.}
Since $f\ge0$, one has $B_\varepsilon'(-\varepsilon\lambda)=-\lambda\le g$.
Use $\eta\deq(-\varepsilon\lambda-u_\varepsilon)^+\in W^{1,2}$ as a test function
in \eqref{eq:penalized-equation}.  Since $\eta\ge0$, we have
$\E[B_\varepsilon'(-\varepsilon\lambda)\eta]\le\E[g\eta]$, and
$Du_\varepsilon\cdot D\eta=-|D\eta|^2$ almost everywhere.  Hence
\[
 \E|D\eta|^2
 +\E\bigl[(B_\varepsilon'(-\varepsilon\lambda)-B_\varepsilon'(u_\varepsilon))\,\eta\bigr]
 \le0\, .
\]
On $\{\eta>0\}$ one has $u_\varepsilon<-\varepsilon\lambda<0$, so
$B_\varepsilon'(-\varepsilon\lambda)-B_\varepsilon'(u_\varepsilon)=\eta/\varepsilon$
there, and the inequality reads $\E|D\eta|^2+\varepsilon^{-1}\,\E\eta^2\le0$.  Hence $\eta=0$ and
\begin{equation}\label{eq:lower-barrier}
 u_\varepsilon\ge-\varepsilon\lambda\quad\text{a.e.}
\end{equation}

The bounded part of the decomposition will be the limit of
\[
 \mu_\varepsilon\deq\lambda+B_\varepsilon'(u_\varepsilon)
 =\lambda-\frac{u_\varepsilon^-}{\varepsilon}\, .
\]
Since $u_\varepsilon^-\ge0$, and $u_\varepsilon^-\le\varepsilon\lambda$ by
\eqref{eq:lower-barrier},
\begin{equation}\label{eq:mu-epsilon-bounds}
 0\le\mu_\varepsilon\le\lambda\, .
\end{equation}
Equation \eqref{eq:penalized-equation} becomes
\begin{equation}\label{eq:penalized-N}
 \E\ip{Du_\varepsilon}{D\varphi}
 =\E[(f-\mu_\varepsilon)\varphi]\, .
\end{equation}
Thus $u_\varepsilon\in\Dom(\cN)$ and
$\cN u_\varepsilon=f-\mu_\varepsilon$.  Taking $\varphi=1$ gives
\begin{equation}\label{eq:mu-epsilon-mass}
 \E\mu_\varepsilon=\E f\, .
\end{equation}

\emph{Uniform bounds.}
Since
$J_\varepsilon(u_\varepsilon)\le J_\varepsilon(0)=0$, dropping the non-negative
penalty term and applying \eqref{eq:penalized-source-bound} with
$v=u_\varepsilon$ gives
\[
 0\ge J_\varepsilon(u_\varepsilon)
 \ge\frac12\,\norm{Du_\varepsilon}_2^2
      -b\,\norm{Du_\varepsilon}_2+a\,\E u_\varepsilon\, .
\]
By \eqref{eq:lower-barrier}, $\E u_\varepsilon\ge-\varepsilon\lambda\ge-\lambda$, so
$\norm{Du_\varepsilon}_2$ is bounded uniformly in $\varepsilon\in(0,1)$.  Since
$\frac12x^2-bx\ge-b^2/2$ for all $x$, the preceding display gives
$\E u_\varepsilon\le b^2/(2a)$.  Together with the Poincar\'e inequality, this
proves a uniform $W^{1,2}$ bound on $u_\varepsilon$.

\emph{Passage to the limit.}
By \cref{lem:compact} in \cref{app:bochner-compactness}, after passing to a
sequence $\varepsilon\downarrow0$,
\[
 u_\varepsilon\to u\quad\text{strongly in }L^2\, ,
 \qquad
 u_\varepsilon\rightharpoonup u\quad\text{weakly in }W^{1,2}\, .
\]
By \eqref{eq:mu-epsilon-bounds}, after a further subsequence,
$\mu_\varepsilon\rightharpoonup\mu$ weakly in $L^2$.  The set
$\{z\in L^2:0\le z\le\lambda\text{ a.e.}\}$ is convex and norm-closed, hence weakly closed;
therefore $0\le\mu\le\lambda$.  Passing to the limit in
\eqref{eq:penalized-N} gives
\[
 \E\ip{Du}{D\varphi}=\E[(f-\mu)\varphi]
 \, .
\]
Thus $u\in\Dom(\cN)$ and $\cN u=f-\mu$.  Moreover,
$\norm{u_\varepsilon^-}_2\le\varepsilon\lambda$ by \eqref{eq:lower-barrier}; strong $L^2$
convergence therefore gives $u^-=0$, hence $u\ge0$.  Weak convergence also
passes \eqref{eq:mu-epsilon-mass} to the limit, so $\E\mu=\E f$.

\emph{The identity $(\lambda-\mu)u=0$.}
It remains to prove \eqref{eq:obstacle-complementarity}, that is,
$\mu=\lambda$ wherever $u>0$.  By the
definition of $\mu_\varepsilon$ and \eqref{eq:mu-epsilon-bounds},
\[
 (\lambda-\mu_\varepsilon)u_\varepsilon
 =-\varepsilon(\lambda-\mu_\varepsilon)^2\, ,
 \qquad
 \abs{(\lambda-\mu_\varepsilon)u_\varepsilon}\le\varepsilon\lambda^2\, .
\]
For every $\varphi\in L^\infty(\gamma_d)$,
\begin{align*}
 \bigl\lvert \E[\varphi \cdot (\lambda-\mu_\varepsilon)(u_\varepsilon-u)] \bigr\rvert
 &\le\lambda\,\norm{\varphi}_\infty\,\norm{u_\varepsilon-u}_1\longrightarrow0\, ,\\
 \E[\varphi \cdot (\mu_\varepsilon-\mu)u]&\longrightarrow0\, ,
\end{align*}
by strong $L^2$ convergence in the first line and weak $L^2$ convergence in the second, since
$\varphi u\in L^2$.  Hence
$\E[\varphi(\lambda-\mu)u]=\lim_{\varepsilon\downarrow 0}\E[\varphi(\lambda-\mu_\varepsilon)u_\varepsilon]=0$
for every bounded $\varphi$, which proves \eqref{eq:obstacle-complementarity}.

On $\Omega\deq\{u>0\}$, \eqref{eq:obstacle-complementarity} gives $\mu=\lambda$.
Since $\mu\ge0$, $\lambda\gamma_d(\Omega)=\E[\mu\1_\Omega]\le\E\mu=\E f$.
Since $u\ge0$, the complement is the zero set $\Omega^{\mathrm c}=\{u=0\}$, and
\cref{lem:bochner,lem:zero-set} give $D^2u=0$ almost everywhere there.  This proves \eqref{eq:obstacle-support}.
\end{proof}

\subsection{Weak-type \texorpdfstring{$(1,1)$}{(1,1)} and \texorpdfstring{$L^q$}{Lq} bounds for
  \texorpdfstring{$R_2$}{R2}}
\label{sec:weak-endpoint}

We prove a weak-type $(1,1)$ estimate for the full Hilbert--Schmidt norm of
$R_2f$, with a constant independent of $d$, and deduce its $L^q$ bounds for
$1<q\le2$.

\subsubsection*{The weak-type \texorpdfstring{$(1,1)$}{(1,1)} bound}

\begin{lemma}[$L^2$ contraction]\label{lem:R2-L2}
For every $f\in L^2(\gamma_d)$,
\[
 \norm{R_2f}_{L^2(\gamma_d;\HS_d)}
 \le\norm{f-\E f}_{L^2(\gamma_d)}
 \le\norm{f}_{L^2(\gamma_d)}\, .
\]
\end{lemma}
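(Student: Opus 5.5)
The plan is to compute everything chaos by chaos, using the Hermite description of $D$ and $\cN$ from \cref{lem:chaos-Sobolev} and \cref{lem:N-weak}. First reduce to polynomial $f$. Polynomials are dense in $L^2(\gamma_d)$, and the argument below shows that $f\mapsto D^2\cN^{-1}(f-\E f)$ is bounded from this dense class. For general $f$, the relevant point is that $u\deq\cN^{-1}(f-\E f)$ lies in $\Dom(\cN)$, and its Hessian is defined in $W^{2,2}$. I would pass to the limit using closedness of $D$ (\cref{lem:chaos-Sobolev}(iii)), applied to $Du$ componentwise. This identifies $R_2f$ as the $L^2$ limit of the polynomial approximants.

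For a polynomial $u$ with chaos expansion $u=\sum_n u_n$, apply \eqref{eq:Du-chaos} to each component $D_iu$. By \cref{lem:chaos-Sobolev}(i), $J_{n-1}D_iu=D_iu_n$, so the chaos components of $D_iu$ are $D_iu_{n+1}$ in $\mathcal C_n$. Summing over $i$ gives
\[
 \norm{D^2u}_{L^2(\HS_d)}^2=\sum_{i}\sum_{n\ge1}n\,\norm{D_iu_{n+1}}_2^2
 =\sum_{n\ge1}n\,\norm{Du_{n+1}}_2^2=\sum_{m\ge2}(m-1)m\,\norm{u_m}_2^2 ,
\]
where the last equality uses \eqref{eq:Du-chaos} once more on the single chaos $u_{m}$. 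Equivalently, one can use the commutation $D_i\cN=(\cN+1)D_i$ to write $\E\abs{D^2u}^2=\E\ip{Du}{\cN Du}=\E\,u\,(\cN-1)\cN u$.

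Now take $u=\cN^{-1}(f-\E f)$, so that $u_m=f_m/m$ for $m\ge1$. Then
\[
 \norm{R_2f}_2^2=\sum_{m\ge2}\frac{m-1}{m}\,\norm{f_m}_2^2\le\sum_{m\ge1}\norm{f_m}_2^2=\norm{f-\E f}_2^2 ,
\]
and the final inequality $\norm{f-\E f}_2\le\norm f_2$ is orthogonality of $\mathcal C_0$ to the other chaoses.

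The only step needing care is the justification of the double application of \eqref{eq:Du-chaos}, namely that $D_iu\in W^{1,2}$, together with the density argument for non-polynomial $f$. Both are routine given \cref{lem:chaos-Sobolev}: the series $\sum m^2\norm{u_m}^2$ converges because $\cN u=f-\E f\in L^2$, and this convergence gives the needed membership and the limit passage.
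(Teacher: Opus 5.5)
Your proposal is correct and is essentially the paper's argument: the paper sets $v\deq\cN^{-1}(f-\E f)$ and applies the Bochner identity $\norm{D^2v}_2^2=\norm{\cN v}_2^2-\norm{Dv}_2^2$ (\cref{lem:bochner}), whose chaos form is exactly your $\sum_{m\ge2}m(m-1)\norm{u_m}_2^2$, and the paper proves that identity with the same chaos-truncation and closedness limit you describe. As your last paragraph indicates, the converse half of \cref{lem:chaos-Sobolev}(i), applied to $D_iu$, already gives $D_iu\in W^{1,2}$ from $\sum_m m^2\norm{u_m}_2^2<\infty$, so the reduction to polynomial $f$ is not needed.
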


\begin{proof}
Let $v\deq\cN^{-1}(f-\E f)$.  Then $\cN v=f-\E f$.  By the Bochner identity (\cref{lem:bochner}),
\[
 \norm{R_2f}_2^2=\norm{D^2v}_2^2
 =\norm{\cN v}_2^2-\norm{Dv}_2^2
 \le\norm{f-\E f}_2^2\le\norm f_2^2\, . \qedhere
\]
\end{proof}

\begin{theorem}[Dimension-free weak-type $(1,1)$ bound for $R_2$]\label{thm:R2-weak11}
For every $d\ge1$, every $f\in L^1(\gamma_d)\cap L^2(\gamma_d)$, and every $t>0$,
\begin{equation}\label{eq:R2-weak11}
 \gamma_d\bigl\{|R_2f|_{\HS}>t\bigr\}
 \le\frac{2\,\norm f_{L^1(\gamma_d)}}{t}\, .
\end{equation}
The constant is independent of $d$.
The operator extends uniquely from $L^1(\gamma_d)\cap L^2(\gamma_d)$ to
$L^1(\gamma_d)$ by convergence in measure, and the same estimate holds for
the extension.
\end{theorem}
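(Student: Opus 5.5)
First reduce to $f\ge0$ and bounded. Since $R_2$ is linear, splitting $f=f^+-f^-$ and using $\{|R_2f|>t\}\subset\{|R_2f^+|>t/2\}\cup\{|R_2f^-|>t/2\}$ would double the constant. So instead I aim for the bound $\gamma_d\{|R_2f|_{\HS}>t\}\le \norm f_1/t$ for nonnegative $f$; applied with threshold $t/2$ to each part, this gives exactly $(\norm{f^+}_1+\norm{f^-}_1)\cdot 2/t=2\norm f_1/t$. For bounded $f$ I would aim for constant $1$ for nonnegative inputs. If that is too optimistic, I would absorb the loss by a slightly different bookkeeping. General $f\in L^1\cap L^2$ then follows by truncation $f_k\deq \min(f,k)$: $f_k\to f$ in $L^2$, so $R_2f_k\to R_2f$ in $L^2$ by \cref{lem:R2-L2}, hence in measure.

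For $f\ge0$ bounded and $\lambda>\E f$, apply \cref{prop:obstacle} to write $f=\cN u+\mu$. Since $R_2$ kills constants and $\cN^{-1}\cN u=u-\E u$, we get $R_2(\cN u)=D^2u$. Hence
\[
 R_2f=D^2u+R_2\mu\, .
\]
By \eqref{eq:obstacle-support}, $D^2u=0$ off $\Omega$, so
\[
 \{|R_2f|>t\}\subset\Omega\cup\{|R_2\mu|>t\}\, .
\]
This gives $\gamma_d(\Omega)\le\E f/\lambda$. Chebyshev's inequality with \cref{lem:R2-L2}, using $0\le\mu\le\lambda$ and $\E\mu=\E f$, gives
\[
 \gamma_d\{|R_2\mu|>t\}\le t^{-2}\norm{\mu-\E\mu}_2^2\le t^{-2}\E\mu^2\le \lambda\E f/t^2\, .
\]
Choosing $\lambda=t$ yields $2\E f/t$ for nonnegative $f$. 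This needs $t>\E f$; when $t\le\E f$ the bound $1\le \E f/t$ is trivial. For signed $f$ this naive route gives $4\norm f_1/t$, not $2$.

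To get the constant $2$ I would sharpen one piece. Note that $\norm{\mu-\E\mu}_2^2=\E\mu^2-(\E f)^2\le\lambda\E f-(\E f)^2$. Alternatively, avoid the splitting loss by applying the decomposition to $|f|$ with signs handled through two obstacle problems. My first attempt would be to take $\lambda=t/2$ for each of $f^\pm$ at level $t/2$. This gives $\Omega^\pm$ with total measure $2(\E f^++\E f^-)/t$, plus a good part $\mu^+-\mu^-$ with $|\mu^+-\mu^-|\le t/2$ and $\norm{\mu^+-\mu^-}_2^2\le (t/2)\norm f_1$. Chebyshev at level $t$ then gives $\norm f_1/(2t)$, for a total of $2.5\norm f_1/t$. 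Optimizing the level $\lambda=ct$ gives $(1/c+c)\norm f_1/t$, minimized at $c=1$, which yields $2\norm f_1/t$. So the plan is: decompose $f^+$ and $f^-$ both at height $\lambda=t$. Then the bad set has measure at most $\norm f_1/t$. The good part $\mu^+-\mu^-$ satisfies $|\mu^+-\mu^-|\le t$ pointwise, and $\E|\mu^+-\mu^-|^2\le \E(\mu^++\mu^-)\,t=t\norm f_1$, since $|a-b|^2\le(a+b)\max(a,b)$ for $a,b\in[0,t]$. Chebyshev then contributes $\norm f_1/t$, for a total of $2\norm f_1/t$.

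The extension to $L^1$ is standard. The weak-type bound makes $R_2$ uniformly continuous from $L^1\cap L^2$ (with the $L^1$ metric) into $L^0$ with convergence in measure, via linearity. The density of $L^1\cap L^2$ in $L^1$ then gives a unique extension, and the estimate passes to the limit. The main obstacle is the case $t\le\max(\E f^\pm)$, where \cref{prop:obstacle} requires $\lambda>\E f^\pm$. There I would note the bound is trivial when $t\le\norm f_1/2$, and otherwise handle the component with $\E f^\pm\ge t$ separately: that would force $\norm f_1\ge t$, again trivial. For the boundary case $\lambda=\E f$, perturb $\lambda$ slightly and let it tend to $t$.
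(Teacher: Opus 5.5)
Your final plan is correct and is essentially the paper's proof: decompose $f^+$ and $f^-$ separately via \cref{prop:obstacle} at the common height $\lambda=t$, and take the union of the two positivity sets as the exceptional set. Then bound $\mu^+-\mu^-$ by Chebyshev, \cref{lem:R2-L2}, and $(\mu^+-\mu^-)^2\le t(\mu^++\mu^-)$, and finish by approximation and extension by continuity in measure. The earlier detours (a constant-$1$ bound for nonnegative inputs, height $t/2$, perturbing $\lambda$) are unnecessary, since the estimate is trivial for $t\le\norm f_1$ and otherwise $t>\E f^\pm$ holds automatically.
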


Weak-type $(1,1)$ bounds for second-order Gaussian Riesz transforms were
known in fixed dimension for each entry $D_iD_j\cN^{-1}$ separately, with
constants depending on $d$
\cite{GarciaCuervaMauceriSjogrenTorrea1999,CasarinoCiattiSjogren2021}.
\Cref{thm:R2-weak11} improves this in two respects: it controls the full
Hilbert--Schmidt norm of the Hessian, and its constant is universal.

\begin{proof}
First, let $f\in L^\infty$.  If $t\le\norm f_1$, the claimed bound is trivial.  Otherwise, apply \cref{prop:obstacle} separately to $f^+$ and $f^-$ with $\lambda=t$.  This gives
\[
 f^\pm=\cN u_\pm+\mu_\pm\, ,\qquad
 0\le\mu_\pm\le t\, ,\qquad \E\mu_\pm=\E f^\pm\, .
\]
Put $\Omega\deq\{u_->0\}\cup\{u_+>0\}$.  By \eqref{eq:obstacle-support}, $\gamma_d(\Omega)\le\norm f_1/t$, and both $D^2u_+$ and $D^2u_-$ vanish almost everywhere on $\Omega^{\mathrm c}$.  Since $\cN^{-1}\cN u_\pm=u_\pm-\E u_\pm$ by \eqref{eq:N-inverse-N}, we have $R_2f=R_2(\mu_+-\mu_-)+D^2(u_+-u_-)$, so outside $\Omega$ only the first term remains.  Applying Chebyshev's inequality to the difference of the bounded parts, gives
\begin{align*}
 \gamma_d\{|R_2f|_{\HS}>t\}
 &\le\gamma_d(\Omega)
       +\frac1{t^2}\,\norm{R_2(\mu_+-\mu_-)}_2^2\\
 &\le\frac{\norm f_1}{t}
       +\frac1{t^2}\,\E[(\mu_+-\mu_-)^2]
  \le\frac{2\,\norm f_1}{t}\, .
\end{align*}
The second line uses the $L^2$ contraction of \cref{lem:R2-L2} and the pointwise bound $(\mu_+-\mu_-)^2\le t\,(\mu_-+\mu_+)$, with expectation $t\,\norm f_1$.

Finally, for $f\in L^1\cap L^2$, choose bounded $f_n\to f$ in both $L^1$ and $L^2$.
By \cref{lem:R2-L2}, $R_2f_n\to R_2f$ in $L^2$, hence in measure.  Pass to
an almost-everywhere convergent subsequence.  For every $0<t'<t$,
\[
 \{|R_2f|_{\HS}>t\}
 \subset\liminf_{n\to\infty}
 { \{|R_2f_n|_{\HS}>t'\}}
 \quad\text{up to a null set}\, .
\]
Fatou's lemma therefore gives
$\gamma_d\{|R_2f|_{\HS}>t\}\le2\,\norm f_1/t'$.  Letting $t'\uparrow t$
proves \eqref{eq:R2-weak11}.
For general $f\in L^1(\gamma_d)$, take $f_n\in L^1(\gamma_d)\cap L^2(\gamma_d)$ with $f_n\to f$ in $L^1$.  Applying \eqref{eq:R2-weak11} to $f_m-f_n$ shows that $(R_2f_n)_{n\ge 0}$ is Cauchy in measure.  Its limit is independent of the approximating sequence and satisfies the same weak-type bound by the preceding limiting argument.
\end{proof}

\subsubsection*{Interpolation to \texorpdfstring{$L^q$}{Lq}}

We record a standard interpolation estimate with an explicit constant.

\begin{lemma}[Interpolation of weak-type $(1,1)$ and $L^2$ bounds]\label{lem:marcinkiewicz}
Let $\nu$ be a finite measure, and let $T$ be a linear operator, initially defined on
$L^1(\nu)\cap L^2(\nu)$,
from scalar functions to functions with values in a finite-dimensional
Hilbert space $\cH$.  Suppose
\[
 \nu\{|Tf|_\cH>t\}\le A\,\norm f_{L^1(\nu)}/t\, ,
 \qquad
 \norm{Tf}_{L^2(\nu;\cH)}\le B\,\norm f_{L^2(\nu)}\, .
\]
Then, for every $1<q\le2$,
\begin{equation*}
    \norm{Tf}_{L^q(\nu;\cH)}\leq\frac{8\,(A+B)}{q-1}\,\norm f_{L^q(\nu)}\, ,
\end{equation*}
initially for $f\in L^q(\nu)\cap L^2(\nu)$ and then, by density, on all of
$L^q(\nu)$.
\end{lemma}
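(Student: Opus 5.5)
The plan is the standard Marcinkiewicz argument, with constants tracked explicitly. Since $\nu$ is finite and $T$ is linear, I work first with $f\in L^q(\nu)\cap L^2(\nu)$ and write
\[
\norm{Tf}_{L^q(\cH)}^q=q\int_0^\infty t^{q-1}\,\nu\{|Tf|_\cH>t\}\,\dd t .
\]
At each height $t>0$, split $f=f_1^t+f_2^t$, where
\[
f_1^t\deq f\1_{\{|f|>ct\}}\, ,\qquad f_2^t\deq f\1_{\{|f|\le ct\}}\, ,
\]
and $c>0$ is a parameter to be chosen at the end. Both pieces lie in $L^1\cap L^2$, because $\nu$ is finite and $f\in L^2$. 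By linearity and the triangle inequality in $\cH$,
\[
\nu\{|Tf|_\cH>t\}\le\nu\{|Tf_1^t|_\cH>t/2\}+\nu\{|Tf_2^t|_\cH>t/2\}.
\]

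\emph{The large part.} Apply the weak-type bound, so this term is at most $(2A/t)\int_{\{|f|>ct\}}|f|\,\dd\nu$. Integrate against $qt^{q-1}$ and use Fubini: the inner integral is $\int_0^{|f|/c}t^{q-2}\,\dd t=(|f|/c)^{q-1}/(q-1)$. The contribution is therefore
\[
\frac{2Aq\,c^{1-q}}{q-1}\,\norm f_q^q .
\]

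\emph{The small part.} Apply Chebyshev's inequality together with the $L^2$ bound, which gives at most $(4B^2/t^2)\int_{\{|f|\le ct\}}|f|^2\,\dd\nu$. Now the inner integral is $\int_{|f|/c}^\infty t^{q-3}\,\dd t=(|f|/c)^{q-2}/(2-q)$. This is the main obstacle: the factor $1/(2-q)$ blows up as $q\to2$, whereas the claimed constant stays bounded there.

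I would handle this by splitting the range of $q$.
\begin{itemize}
\item For $1<q\le3/2$, the factor $1/(2-q)\le2$ is harmless. The total is then
\[
\norm{Tf}_q^q\le\Bigl(\frac{2Aq\,c^{1-q}}{q-1}+\frac{4B^2q\,c^{2-q}}{2-q}\Bigr)\norm f_q^q .
\]
\item For $3/2\le q\le2$, avoid the direct estimate. Instead, Riesz--Thorin interpolate, vector-valued as in the finite-dimensional Hilbert case, between the $L^{3/2}$ bound just obtained and the $L^2$ bound $B$. The resulting constant is at most $\max(\text{the }L^{3/2}\text{ constant},B)$, which is bounded by a multiple of $A+B$ that is certainly at most $8(A+B)/(q-1)$, since $q-1\le1$.
\end{itemize}
Alternatively, the homogeneity trick $T\mapsto T/B$ with $c$ chosen of order $B/A$ might avoid the split entirely, but the split is the safest route.

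\emph{Optimizing $c$.} With the normalization $c\deq B^{-1}$, applied to a rescaled $f$, the bracket becomes roughly
\[
\frac{2qA\,B^{q-1}}{q-1}+8qB^{q}\le\frac{C(A+B)^q}{q-1}.
\]
Taking $q$-th roots gives $(C/(q-1))^{1/q}(A+B)$. Since $1/q\le1$ and $C/(q-1)\ge1$, this is at most $C(A+B)/(q-1)$. I would then verify numerically that $C\le8$ in both $q$-ranges.

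Finally, extend to all of $L^q(\nu)$ by density of $L^q\cap L^2$, defining $T$ by continuity on $L^q$.
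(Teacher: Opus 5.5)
Your argument is correct in substance, but it takes a different, self-contained route. The paper does no computation. It cites \cite[Exercise~1.3.2]{Gra14Fourier} for the scalar inequality with constant $8$. It then reduces the $\cH$-valued case to the scalar one by applying the scalar result to $f\mapsto\ip{Tf}{h}_\cH$ for a fixed $h$ with $\abs{h}_\cH\le1$, and choosing $h=Tf/\abs{Tf}_\cH$.

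You instead run the layer-cake argument directly on $\abs{Tf}_\cH$. That argument needs only the triangle inequality in $\cH$, so no scalarization is required there. You also correctly see that the single-level argument carries an intrinsic factor $1/(2-q)$, and you remove it by splitting at $q=3/2$ and interpolating with the $L^2$ bound. Your route shows where every constant comes from. The paper's route buys brevity, and its scalarization trick is exactly what makes your interpolation step clean.

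Three points need tightening.

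(i) Your aside about choosing $c$ cannot replace the split. With $a=2qA$ and $b=4qB^2$, the minimum over $c$ of your bracket is $a^{2-q}b^{q-1}/\bigl((q-1)(2-q)\bigr)$, so the factor $1/(2-q)$ survives any choice of $c$.

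(ii) The constants need sharper bookkeeping.
On $(1,3/2]$, using $2q\le3$ and $4q/(2-q)\le12$ gives $9$, not $8$. To get $8$, normalize $A+B=1$ and use $\max_{0\le b\le1}(1-b)b^{q-1}\le1/q$. This bounds the first term by $2/(q-1)$ and the second by $12\le6/(q-1)$, which gives exactly $8$.
On $[3/2,2]$, the claim $\max(K_{3/2},B)\le8(A+B)/(q-1)$ requires $K_{3/2}\le8(A+B)$. Your lower-range result evaluated at $q=3/2$ only gives $16(A+B)$. Compute directly instead: with $c=1/B$ your bracket at $q=3/2$ is at most $18(A+B)^{3/2}$, so $K_{3/2}<7(A+B)$.

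(iii) Riesz--Thorin is a statement about complex-linear operators, whereas $T$ is real-linear and $\cH$-valued. Apply it to the real scalar operators $f\mapsto\ip{Tf}{h}_\cH$; a real-linear operator on $L^r$ keeps its norm under complexification, by Gaussian averaging. Then take $h=Tf/\abs{Tf}_\cH$, as the paper does.
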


\begin{proof}
    The interpolation argument is standard; see~\cite[Exercise 1.3.2]{Gra14Fourier}, which gives the constant $8$. That exercise is for a scalar-valued map $T$, but for each $\cH$-valued function $h$ with $\abs h_{\cH} \le 1$, we can apply it to the map $T_h : f \mapsto \langle Tf, h \rangle_{\cH}$, and then for non-zero $f$, take $h = Tf/\abs{Tf}_{\cH}$.
\end{proof}

\begin{corollary}[$L^q$ bound of order $(q-1)^{-1}$ for $R_2$]\label{cor:R2-Lq}
For every $1<q\le2$ and every $f\in L^q(\gamma_d)$,
\begin{equation}\label{eq:R2-Lq}
 \norm{R_2f}_{L^q(\gamma_d;\HS_d)}
 \le\frac{24}{q-1}\,\norm f_{L^q(\gamma_d)}\, .
\end{equation}
\end{corollary}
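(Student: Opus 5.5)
This follows directly from \cref{lem:marcinkiewicz} applied to $T\deq R_2$, with $\nu\deq\gamma_d$ and $\cH\deq\HS_d$. $R_2$ is linear and defined on $L^1(\gamma_d)\cap L^2(\gamma_d)\subset L^2(\gamma_d)$, and its values are symmetric $d\times d$ matrices in the finite-dimensional Hilbert space $\HS_d$. The two hypotheses of the interpolation lemma are exactly the results just proved:
\begin{itemize}[label=\textendash]
\item \cref{thm:R2-weak11} gives the weak-type $(1,1)$ bound with $A=2$, uniformly in $d$;
\item \cref{lem:R2-L2} gives the $L^2$ contraction, so $B=1$.
\end{itemize}
Therefore, for $1<q\le2$ and $f\in L^q(\gamma_d)\cap L^2(\gamma_d)$,
\[
 \norm{R_2f}_{L^q(\gamma_d;\HS_d)}\le\frac{8(2+1)}{q-1}\,\norm f_{L^q(\gamma_d)}=\frac{24}{q-1}\,\norm f_{L^q(\gamma_d)}\, .
\]

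It remains to pass to all of $f\in L^q(\gamma_d)$. Since $\gamma_d$ is a probability measure, $L^2\subset L^q$ densely, so $R_2$ extends uniquely by continuity from $L^q\cap L^2$ to a bounded operator on $L^q$ with the same constant. This extension is consistent with the $L^1$ extension by convergence in measure from \cref{thm:R2-weak11}. To see this, take $f_n\to f$ in $L^q$, hence in $L^1$. The weak-type bound then shows $R_2f_n\to R_2f$ (the $L^1$ extension) in measure, while $L^q$ convergence identifies the same limit almost everywhere along a subsequence.

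The only point needing care is that \cref{lem:marcinkiewicz} uses the vector-valued reduction to scalar functionals; that reduction is already given in the lemma's proof. I expect no real obstacle here: the substance lies entirely in \cref{thm:R2-weak11}, and this corollary is just the bookkeeping $8(A+B)=24$.
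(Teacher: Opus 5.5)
Your proposal is correct and matches the paper's proof: both apply \cref{lem:marcinkiewicz} to $T=R_2$ with $A=2$ from \cref{thm:R2-weak11} and $B=1$ from \cref{lem:R2-L2}, which gives $8(A+B)=24$, and then extend by density to all of $L^q(\gamma_d)$. Your remark that this $L^q$ extension agrees with the convergence-in-measure extension of \cref{thm:R2-weak11} is a harmless extra check that the paper leaves implicit.
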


\begin{proof}
Apply \cref{lem:marcinkiewicz} to $T=R_2$ with $A=2$ and $B=1$, using
\cref{thm:R2-weak11,lem:R2-L2}. The lemma also gives the unique bounded
extension of $R_2$ to $L^q(\gamma_d)$.
\end{proof}

The order $(q-1)^{-1}$ in \eqref{eq:R2-Lq} cannot be improved: the lower
bound in \cref{cor:even-Riesz} with $r=1$ shows that the norm of $R_2$ on
$L^q(\gamma_d)$ is at least $c/(q-1)$.

\subsection{The shifted transform and its Hilbert-valued extension}
\label{sec:shift}

We prove the $L^q$ bound for the shifted second-order transform
$(\cN+1)^{-1}D^2$ and extend it to Hilbert-valued inputs, with constants
independent of the base and target dimensions.

\begin{lemma}[Shifted-transform identity]\label{lem:shifted-identity}
For every $F\in C_{\rm c}^\infty(\R^d)$,
\begin{equation}\label{eq:shifted-identity}
 (\cN+1)^{-1}D^2F
 =\bigl[\Id+(\cN+1)^{-1}\bigr]R_2F\, .
\end{equation}
Consequently, for $1<q\le2$,
\begin{equation}\label{eq:shifted-Lq-scalar}
 \norm{(\cN+1)^{-1}D^2F}_{L^q(\gamma_d;\HS_d)}
 \le\frac{48}{q-1}\,\norm F_{L^q(\gamma_d)}\, .
\end{equation}
\end{lemma}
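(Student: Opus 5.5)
The identity \eqref{eq:shifted-identity} is linear in $F$, so the plan is to verify it chaos by chaos and then transfer the bound of \cref{cor:R2-Lq}.

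\textbf{Step 1: the identity on one chaos.} First take $F$ a polynomial with $F=F_n\in\mathcal C_n$. Iterating the commutation relation of \cref{lem:N-weak}(ii), in the form $D\phi(\cN)=\phi(\cN+1)D$, gives
\[
D^2\phi(\cN)=\phi(\cN+2)D^2 .
\]
We then compute both sides of \eqref{eq:shifted-identity}.
\begin{itemize}
\item If $n\geq2$, then $D^2F_n\in\mathcal C_{n-2}\otimes\HS_d$. The left side is $(n-1)^{-1}D^2F_n$.
\item On the right, $R_2F_n=n^{-1}D^2F_n$ also lies in chaos $n-2$, so
\[
[\Id+(\cN+1)^{-1}]R_2F_n=\Bigl(1+\frac1{n-1}\Bigr)\frac1n\,D^2F_n=\frac1{n-1}\,D^2F_n .
\]
The two sides agree.
\item For $n\leq1$, $D^2F_n=0$ and $R_2F_n=0$, so both sides vanish.
\end{itemize}
Summing over chaoses gives the identity on all polynomials.

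\textbf{Step 2: extension to $F\in C_{\rm c}^\infty$.} This is the only step that is not pure algebra. Take polynomials $F_k\to F$ in $W^{2,2}(\gamma_d)$, using the density stated in \cref{sec:notation}.
\begin{itemize}
\item The left side is continuous in this topology, since $(\cN+1)^{-1}$ is a contraction on $L^2$ by \cref{lem:OU}(v).
\item On the right, $R_2$ is an $L^2$ contraction by \cref{lem:R2-L2}, and $\Id+(\cN+1)^{-1}$ is bounded on $L^2$.
\end{itemize}
Both sides therefore pass to the limit. Alternatively, one can check directly that the chaos projections of $D^2F$ equal $D^2$ of the chaos projections of $F$, using \cref{lem:chaos-Sobolev}(i) twice, and then sum the chaos-wise identity in $L^2$.

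\textbf{Step 3: the $L^q$ bound.} Apply the identity for $F\in C_{\rm c}^\infty\subset L^q\cap L^2$ and use \cref{cor:R2-Lq}, which gives $\norm{R_2F}_q\leq 24(q-1)^{-1}\norm F_q$. By \cref{lem:OU}(v), applied with $\cH=\HS_d$ (acting componentwise), $(\cN+1)^{-1}$ is a contraction on $L^q(\gamma_d;\HS_d)$. Hence $\Id+(\cN+1)^{-1}$ has norm at most $2$, and
\[
\norm{(\cN+1)^{-1}D^2F}_q\leq 2\cdot\frac{24}{q-1}\,\norm F_q=\frac{48}{q-1}\,\norm F_q ,
\]
which is \eqref{eq:shifted-Lq-scalar}.

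We must make sure that the Mehler-integral definition of $(\cN+1)^{-1}$ on $L^q$ agrees with the spectral one on $L^2\cap L^q$. This is recorded in \cref{lem:OU}(v), so the main obstacle is only the routine density bookkeeping in Step 2.
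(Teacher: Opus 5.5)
Your proposal is correct and follows the paper's proof: you verify the identity chaos by chaos via the factors $1/(n-1)$ and $(1+1/(n-1))/n$, extend to $C_{\rm c}^\infty$ by an $L^2$ limit using the contractivity of $R_2$ and of $(\cN+1)^{-1}$, and get $2\cdot 24/(q-1)$ from the contraction of the resolvent on $L^q(\gamma_d;\HS_d)$. For the limit step, the paper takes the chaos truncations $F^{(N)}$ and applies the Bochner identity to $F-F^{(N)}$ to get $D^2F^{(N)}\to D^2F$ in $L^2$, which is essentially your ``alternative''; this avoids needing density of polynomials in $W^{2,2}$, which the paper states explicitly only for $W^{1,p}$.
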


\begin{proof}
First let $F_n$ lie in the $n$-th Wiener chaos, with $n\ge2$.
Since $D^2F_n$ lies in the $(n-2)$-nd Wiener chaos,
\[
 (\cN+1)^{-1}D^2F_n=\frac1{n-1}\,D^2F_n\, ,
 \qquad
 R_2F_n=\frac1n\,D^2F_n\, .
\]
Applying $\Id+(\cN+1)^{-1}$ to the latter multiplies it by
$1+1/(n-1)=n/(n-1)$, proving the identity on finite chaos sums.  Both sides vanish on the
zeroth and first chaoses.

For a general $F\in C_{\rm c}^\infty(\R^d)$, let $F^{(N)}$ be its chaos truncations.  Since
$F\in\Dom(\cN)$, \cref{lem:bochner}, applied to $F-F^{(N)}$, gives
$D^2F^{(N)}\to D^2F$ in $L^2(\gamma_d;\HS_d)$.  The $L^2$ contraction in
\cref{lem:R2-L2} gives $R_2F^{(N)}\to R_2F$ in $L^2$, and the resolvent
$(\cN+1)^{-1}$ is an $L^2$ contraction.  Passing to the limit proves
\eqref{eq:shifted-identity} in $L^2$ for $F$.

By \cref{lem:OU}(v), $(\cN+1)^{-1}$ is a contraction on
$L^q(\gamma_d;\HS_d)$.  Combining
\eqref{eq:shifted-identity} with \cref{cor:R2-Lq} proves \eqref{eq:shifted-Lq-scalar}, since $\Id+(\cN+1)^{-1}$ has norm at most $2$.
\end{proof}

\begin{lemma}[Dimension-free Hilbert-valued extension]\label{lem:Hilbert-extension}
Let $\cH$ be a finite-dimensional real Hilbert space and $1<q\le2$.  For every
$F\in C_{\rm c}^\infty(\R^d;\cH)$,
\begin{equation}\label{eq:shifted-Lq-H}
 \norm{(\cN+1)^{-1}D^2F}_{L^q(\gamma_d;\cH^{d\times d})}
 \le\frac{48}{q-1}\,\norm F_{L^q(\gamma_d;\cH)}\, ,
\end{equation}
\end{lemma}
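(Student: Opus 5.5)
The plan is to reduce the $\cH$-valued estimate to the scalar bound \eqref{eq:shifted-Lq-scalar} by Gaussian randomization. This is the approach advertised in the overview of \cref{sec:shift}. Fix an orthonormal basis $(h_k)_{k=1}^m$ of $\cH$ and write $F=\sum_kF_kh_k$ with scalar $F_k\in C_{\rm c}^\infty(\R^d)$. Since $(\cN+1)^{-1}D^2$ acts componentwise, we have $T F=\sum_k(TF_k)h_k$, where $T\deq(\cN+1)^{-1}D^2$. Let $G=(G_1,\dots,G_m)$ be a standard Gaussian vector independent of $x$, and put $F_G\deq\sum_kG_kF_k=\ip{G}{F}_\cH$. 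This is a scalar $C_{\rm c}^\infty$ function for each fixed $G$. By linearity, $TF_G=\sum_kG_k\,TF_k$.

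\emph{Step 1: lower randomization identity.} For a fixed point $x$, the matrix $TF_G(x)$ is a centered Gaussian vector in $\HS_d$ with covariance determined by the $\cH$-valued array $TF(x)$. Its entries are $\sum_kG_k(TF_k)_{ij}(x)$. The key claim is
\[
 \E_G\abs{TF_G(x)}_{\HS}^q\ \ge\ m_q^q\,\norm{TF(x)}_{\cH^{d\times d}}^q\, .
\]
I would argue this as follows. The random vector $Z=\sum_kG_ka_k$ in a Hilbert space, with $a_k\in\HS_d$, satisfies $\E\abs Z^2=\sum_k\abs{a_k}^2=\norm{TF(x)}^2_{\cH^{d\times d}}$. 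Since $q\le2$, one cannot simply use Jensen in the direction needed. Instead I would project onto a single direction. Choose the unit vector $e$ in $\HS_d$ maximizing the variance of $\ip Ze$; this gives only the operator norm, not the Hilbert--Schmidt norm, so a comparison constant would be lost.

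A cleaner route avoids this loss by a symmetric randomization. Randomize both sides: apply \eqref{eq:shifted-Lq-scalar} to $F_G$, raise it to the $q$th power, and take $\E_G$. This gives
\[
 \E_G\E\abs{TF_G}_{\HS}^q\le\Bigl(\frac{48}{q-1}\Bigr)^q\,\E\,\E_G\abs{\ip G{F}_\cH}^q=\Bigl(\frac{48}{q-1}\Bigr)^q m_q^q\,\E\norm F_\cH^q\, .
\]
Here I use that $\ip GF$ is exactly $\norm F_\cH$ times a standard Gaussian. On the left, I need $\E_G\abs Z_{\HS}^q\ge m_q^q(\E_G\abs Z^2)^{q/2}$ for a centered Gaussian vector $Z$ in a Hilbert space, with $q\le2$.

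\emph{Step 2 (main obstacle).} This reverse inequality is the hard point, since in general the ratio $\E\abs Z^q/(\E\abs Z^2)^{q/2}$ lies between $m_q^q$ (the rank-one case) and $1$ (high rank). I would prove that the rank-one case is extremal. Write $\abs Z^2=\sum_l\sigma_l g_l^2$ with $\sum_l\sigma_l=1$. The map $s\mapsto\E(\sum_l\sigma_lg_l^2)^{q/2}$ is concave in $\sigma$ on the simplex, because $t\mapsto t^{q/2}$ is concave and the expression is linear in $\sigma$ inside. A concave function attains its minimum at extreme points, which are the rank-one covariances where the value is $\E\abs{g}^q=m_q^q$. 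Hence
\[
\E_G\abs{TF_G(x)}_{\HS}^q\ge m_q^q\norm{TF(x)}^q_{\cH^{d\times d}}\, .
\]
Integrating in $x$ and dividing by $m_q^q$ yields \eqref{eq:shifted-Lq-H} with the same constant $48/(q-1)$, independent of $d$ and $\dim\cH$. Fubini is justified because all functions involved are in $L^2$ and the sums are finite.
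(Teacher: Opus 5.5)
Your proof is correct. It has the same structure as the paper's: randomize $F_G=\sum_kG_kF_k$, apply the scalar bound \eqref{eq:shifted-Lq-scalar} for each fixed $G$, and use $\E_G\abs{F_G(x)}^q=m_q^q\norm{F(x)}_\cH^q$ on the right. On the left you use a lower Gaussian moment comparison, and the two factors $m_q^q$ cancel.

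The one real difference is how you obtain the lower comparison $\E_G\abs{Z}_{\HS}^q\ge m_q^q(\E_G\abs Z_{\HS}^2)^{q/2}$ for $Z=TF_G(x)$.

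\begin{itemize}
\item The paper, via \cref{lem:gaussian-randomization}, cites the Lata\l a--Oleszkiewicz moment comparison. That result holds for Gaussian vectors in an arbitrary normed space and for any pair of exponents.
\item You diagonalize the covariance and write $\abs Z^2\stackrel{\mathrm{law}}{=}\sum_l\sigma_lg_l^2$, normalized by homogeneity so that $\sum_l\sigma_l=1$. You then note that $\sigma\mapsto\E(\sum_l\sigma_lg_l^2)^{q/2}$ is concave on the simplex because $q\le2$. It is therefore bounded below by its values at the vertices, each of which equals $m_q^q$.
\end{itemize}

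Your argument is a self-contained, elementary proof of exactly the special case needed here: a Hilbert norm and $q\le2$. The same argument with convexity of $t\mapsto t^{r/2}$ for $r\ge2$ also gives the upper bound $\E\abs Z^r\le m_r^r(\E\abs Z^2)^{r/2}$. The paper uses that upper bound for the mean term in \cref{cor:Hilbert-main}. So for Hilbert targets the citation can be avoided entirely; the paper's route only buys generality to non-Hilbertian norms, which is not needed.

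In a final write-up, drop the abandoned attempt to project onto a single direction, since it plays no role in the argument.
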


\begin{proof}
    We use the standard Marcinkiewicz--Zygmund extension argument.
    Choose an orthonormal basis $(e_a)_{a\ge 0}$ of $\cH$ and write $F=\sum_{a\ge 0} F_ae_a$.  Let
    $(g_a)_{a\ge 0}$ be independent standard Gaussians, indexed by the same basis and
    independent of $x$, and consider the Gaussian randomization $F_g\deq\sum_{a\ge 0} g_aF_a$.  Denote $T\deq(\cN+1)^{-1}D^2$, which is linear and does not act on $g$.
By the lower bound in \cref{lem:gaussian-randomization}, applied pointwise
in the Hilbert space $\HS_d$,
\[
 |TF(x)|_{\cH^{d\times d}}^q
 \le m_q^{-q}\,\E_g|T F_g(x)|_{\HS_d}^q\, .
\]
For the scalar randomization, \cref{lem:Gaussian-linear-forms}(i) gives
$\E_g\abs{F_g(x)}^q=m_q^q\,\norm{F(x)}_\cH^q$.
Integrating in $x$ and applying \eqref{eq:shifted-Lq-scalar}, we therefore obtain
\begin{align*}
 \norm{TF}_{L^q(\cH^{d\times d})}^q
 &\le m_q^{-q}\,\E_g\norm{TF_g}_{L^q(\HS_d)}^q
 \le \Bigl(\frac{48}{m_q\,(q-1)}\Bigr)^q\,\E_g\norm{F_g}_{L^q}^q
 =\Bigl(\frac{48}{q-1}\Bigr)^q\,\norm F_{L^q(\cH)}^q\, .
\end{align*}
Thus the factors $m_q$ cancel, and \eqref{eq:shifted-Lq-H} holds with the
same constant as the scalar estimate, independently of $\dim \cH$.
\end{proof}

\subsection{Duality and proof of the divergence inequality}
\label{sec:duality}

We prove the divergence inequality for Hilbert-valued fields, completing
the upper bounds in \cref{thm:main,cor:Hilbert-main}.  The following identity
is an integration by parts; we record the operator domains that justify it.

\begin{lemma}[Centered duality]\label{lem:centered-duality}
Let $\cH$ be a finite-dimensional real Hilbert space and let
$V\deq (V_i)_{i=1}^d\in W^{1,2}(\gamma_d;\cH^d)$ satisfy $\E V_i=0$ for every $i$.  Then, for
every $F\in C_{\rm c}^\infty(\R^d;\cH)$,
\begin{equation*}
 \ip{\delta V}{F}
 =\E\sum_{i,j=1}^d
 \ip{(\cN+1)^{-1}D_jD_iF}{D_jV_i}_\cH\, .
\end{equation*}
\end{lemma}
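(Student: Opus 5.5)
The plan is to verify the identity chaos by chaos, using the commutation relations of \cref{lem:N-weak}(ii), and then pass to general $F$ and $V$ by $L^2$ approximation. Working componentwise in an orthonormal basis of $\cH$, it suffices to treat scalar $F$ and scalar components $V_i$. By \cref{lem:chaos-Sobolev}(ii), $\ip{\delta V}{F}=\E\sum_i (D_iF)V_i$. The starting point is to write $D_iF$, which has its mean $\E D_iF$ plus a centered part, in the form
\[
 D_iF-\E D_iF=(\cN+1)^{-1}(\cN+1)(D_iF-\E D_iF)\, .
\]
Since $\E V_i=0$, the mean contributes nothing. For the centered part we want the identity
\[
 \E\bigl[(D_iF-\E D_iF)V_i\bigr]=\E\sum_j\ip{D_j(\cN+1)^{-1}D_iF}{D_jV_i}\, ,
\]
that is, $\E[wV_i]=\E\ip{D\cN^{-1}w}{DV_i}$ for centered $w=D_iF-\E D_iF$. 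This holds by \cref{lem:N-weak}(i) applied to $u=\cN^{-1}w$ and $h=w$, tested against $\varphi=V_i\in W^{1,2}$.

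The next step is to identify $D_j\cN^{-1}(D_iF-\E D_iF)$ with $(\cN+1)^{-1}D_jD_iF$. Using $D\phi(\cN)u=\phi(\cN+1)Du$ from \cref{lem:N-weak}(ii), this reduces to a chaos-by-chaos check. If $F_n\in\mathcal C_n$ with $n\geq2$, then $D_iF_n\in\mathcal C_{n-1}$ and $\cN^{-1}D_iF_n=D_iF_n/(n-1)$. Differentiating gives $D_jD_iF_n/(n-1)$, and this equals $(\cN+1)^{-1}D_jD_iF_n$ because $D_jD_iF_n\in\mathcal C_{n-2}$. For $n\leq1$, $D_iF_n$ is constant, so both sides vanish. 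Equivalently, one can apply the bounded-multiplier form of \cref{lem:N-weak}(ii) with $\phi(k)=1/k$ for $k\geq1$ and $\phi(0)=0$; then $\phi(\cN+1)=(\cN+1)^{-1}$ exactly kills the zeroth chaos of $D_jD_iF$, which is consistent with the formula.

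The main technical obstacle is justification: we need $D_iF\in W^{1,2}$, $\cN^{-1}w\in W^{1,2}$, and the interchange of $D_j$ with the spectral multiplier for a non-polynomial $F\in C_{\rm c}^\infty$. I would handle this with the chaos truncations $F^{(N)}$ exactly as in the proof of \cref{lem:shifted-identity}. The identity holds for each polynomial $F^{(N)}$ by the computation above. As $N\to\infty$, $DF^{(N)}\to DF$ in $L^2$, and by Bochner (\cref{lem:bochner}) $D^2F^{(N)}\to D^2F$ in $L^2(\HS_d)$. The resolvent is an $L^2$ contraction, and $V_i$, $DV_i\in L^2$. So both sides of the identity converge, and we obtain the claim for $F$.
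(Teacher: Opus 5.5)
Your argument is correct, but it runs the integration by parts in the opposite direction from the paper. The paper inverts $\cN$ on the field. It sets $u_i=\cN^{-1}V_i$ (allowed because $\E V_i=0$) and uses \cref{lem:N-weak} to get $Du_i=(\cN+1)^{-1}DV_i\in\Dom(\delta)$ with $\delta Du_i=V_i$. It then pairs this with $D_iF$ through the $D$--$\delta$ adjointness, and finally moves $(\cN+1)^{-1}$ onto $D_jD_iF$ by self-adjointness.

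You instead invert $\cN$ on the centered test quantity $w=D_iF-\E D_iF$, using $\E V_i=0$ only to discard the mean of $D_iF$. You then apply the weak formulation of \cref{lem:N-weak}(i) with $\varphi=V_i$, and commute $D_j$ past $\cN^{-1}$ on the smooth side.

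What each route buys: yours never applies $\delta$ to the non-smooth field $Du_i$ and needs no self-adjointness step, since every spectral manipulation happens on $F$. The paper's route is shorter and displays $\delta V=T^*(DV)$ directly. Your ``main technical obstacle'' is in fact already settled by \cref{lem:N-weak}(ii) as stated. It applies to $u=D_iF\in W^{1,2}$ with the bounded multiplier $\phi(k)=1/k$ for $k\geq1$, $\phi(0)=0$. Also, $\cN^{-1}w\in\Dom(\cN)\subset W^{1,2}$ is immediate from the spectral definition. So the chaos-truncation argument is redundant, though it is correct.

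Two slips should be fixed.

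First, your first display, with $D_j(\cN+1)^{-1}D_iF$, is false as written. By the commutation relation that term equals $(\cN+2)^{-1}D_jD_iF$, not $(\cN+1)^{-1}D_jD_iF$. For example, with $d=1$, $V_1(x)=x$ and $F$ replaced by $x^2-1$, its left side is $2$ and its right side is $1$. The correct identity is the one you state immediately afterwards, $\E[wV_i]=\E\ip{D\cN^{-1}w}{DV_i}$, and that is what the rest of your argument actually uses.

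Second, $\phi(\cN+1)=(\cN+1)^{-1}$ does not kill the zeroth chaos of $D_jD_iF$; it acts as the identity there, on the contribution from the chaos-two part of $F$. What is killed is the mean of $D_iF$, by $\phi(\cN)$, and this is harmless because its derivative vanishes anyway.
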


\begin{proof}
For each $i$, centering gives $u_i\deq\cN^{-1}V_i\in\Dom(\cN)$
with $\cN u_i=V_i$.  By \cref{lem:N-weak}(i)--(ii),
$Du_i=(\cN+1)^{-1}DV_i$ lies in $\Dom(\delta)$ and
$\delta Du_i=V_i$.  Since $D_iF\in W^{1,2}$, the adjoint relation and
self-adjointness of $(\cN+1)^{-1}$ on $L^2$ give
\begin{align*}
 \ip{\delta V}{F}
 &=\sum_{i=1}^d\E\ip{D_iF}{V_i}_\cH \\
 &=\sum_{i,j=1}^d\E\ip{D_jD_iF}{(\cN+1)^{-1}D_jV_i}_\cH
 =\sum_{i,j=1}^d\E\ip{(\cN+1)^{-1}D_jD_iF}{D_jV_i}_\cH\, .
\end{align*}
All terms are finite because $F$ is smooth and compactly supported and $V\in W^{1,2}$.
\end{proof}

\begin{proof}[Proof of \cref{cor:Hilbert-main} and the upper bound in \cref{thm:main}]
Write $V_0\deq V-\E V$.  Since $p\ge2$, one has
$V_0\in W^{1,2}(\gamma_d;\cH^d)$.  Recall that $p'=p/(p-1)\in(1,2]$.  For
$F\in C_{\rm c}^\infty(\R^d;\cH)$, \cref{lem:centered-duality,lem:Hilbert-extension} and
H\"older's inequality give
\begin{align*}
 \abs{\ip{\delta V_0}{F}}
 &\le
 \norm{(\cN+1)^{-1}D^2F}_{L^{p'}(\cH^{d\times d})}
 \,\norm{DV}_{L^p(\cH^{d\times d})}\\
 &\le \frac{48}{p'-1}\,\norm F_{L^{p'}(\cH)}\,\norm{DV}_{L^p(\cH^{d\times d})}\, .
\end{align*}
The test class $C_{\rm c}^\infty(\R^d;\cH)$ is dense in $L^{p'}(\gamma_d;\cH)$.  Hence the distribution
$\delta V_0$ extends to a bounded linear functional on that space.  Since $\cH$ is
finite-dimensional, $L^p(\gamma_d;\cH)$ is the dual of $L^{p'}(\gamma_d;\cH)$, so this
functional is given by an element of $L^p(\gamma_d;\cH)$, and since
$(p'-1)^{-1}=p-1\le p$,
\begin{equation}\label{eq:centered-final}
 \norm{\delta V_0}_{L^p(\cH)}\le 48\,(p-1)\,\norm{DV}_{L^p(\cH^{d\times d})}\, .
\end{equation}

For the constant field $\E V$, one has
$\delta(\E V)=\sum_iX_i\,\E V_i$ with $X\sim\gamma_d$.
\Cref{lem:gaussian-randomization} gives
\[
 \norm{\delta(\E V)}_{L^p(\cH)}
 \leq m_p\,\biggl(\sum_{i=1}^d\norm{\E V_i}_\cH^2\biggr)^{1/2}
 \leq\sqrt p\,\norm{\E V}_{\cH^d}\, .
\]
Combining this with \eqref{eq:centered-final} proves
\eqref{eq:Hilbert-main}; the scalar target $\R$ gives \eqref{eq:main}.
\end{proof}

\begin{remark}
    In~\cite[Lemma C.3]{Chen+26PicardHMC}, we showed that $(\cN+1)^{-1}D^2$ can be written as a bounded operator composed with two first-order Riesz transforms. By applying the bound for the first-order Riesz transform twice, we obtained \cref{thm:main} with the suboptimal constant $Cp^2$ rather than $Cp$ in front of the second term. The improvement here comes from directly treating the second-order Riesz transform via the Calder\'on--Zygmund decomposition.
\end{remark}

\subsection{Even-order Riesz transforms}
\label{sec:even-Riesz}

We prove the bounds for the higher even-order transforms and their
adjoints, including the matching lower bound in \cref{cor:even-Riesz}.

\begin{proof}[Proof of \cref{cor:even-Riesz}]
\emph{Upper bounds.}
For $a\geq0$, put
\[
 S_a\deq D^2(\cN+a)^{-1}\, ,
\]
where $S_0=D^2\cN^{-1}=R_2$.  The scalar estimate in
\cref{cor:R2-Lq}, followed by the same Gaussian randomization as in
\cref{lem:Hilbert-extension}, gives
\begin{equation}
 \norm{S_0F}_{L^q(\gamma_d;\cH^{d\times d})}
 \leq\frac{24}{q-1}\,\norm F_{L^q(\gamma_d;\cH)}
 \label{eq:R2-Hilbert}
\end{equation}
If $a>0$, then $S_a=S_0M_a$ with $M_a=\cN(\cN+a)^{-1}$ as in
\eqref{eq:Ma}, and
\cref{lem:OU}(v) gives $\norm{M_a}_{L^q(\cH)\to L^q(\cH)}\leq2$.
Consequently, every $S_a$ satisfies
\[
 \norm{S_aF}_{L^q(\gamma_d;\cH^{d\times d})}
 \leq\frac{48}{q-1}\,\norm F_{L^q(\gamma_d;\cH)}\, .
\]

To see how the shifts combine, let $f_n$ belong to the $n$-th Wiener chaos
with $n\ge2r$.  After $k$ pairs of derivatives, the chaos order is $n-2k$,
so
\[
 S_{2k}(D^{2k}f_n)=\frac1n\,D^{2k+2}f_n\, ,
 \qquad k=0,\dots,r-1\, .
\]
Each step therefore contributes a factor $1/n$.  This proves, first on
finite chaos sums,
\begin{equation*}
 D^{2r}\cN^{-r}
 =S_{2r-2}\circ S_{2r-4}\circ\cdots\circ S_2\circ S_0\, .
\end{equation*}
Both sides vanish on chaoses of order less than $2r$.  Iterating
\eqref{eq:R2-Hilbert} for $S_0$ and the bound $48/(q-1)$ for each subsequent factor, then using density proves
\[
 \norm{D^{2r}\cN^{-r}f}_{L^q((\R^d)^{\otimes 2r})}
 \leq\frac{24\cdot48^{r-1}}{(q-1)^r}\,\norm f_{L^q}\, .
\]
The same argument, beginning with an arbitrary finite-dimensional $\cH$, proves
the asserted Hilbert-valued extension.  Let $p\ge2$ be the exponent with
$p'=q$, so that $q-1=(p-1)^{-1}$.  Taking Banach space adjoints proves
\eqref{eq:even-Riesz-adjoint}, first on the stated test tensors and then, by
density, as a bounded operator on the full tensor-valued $L^p$ space.

\emph{Lower bound.}  On smooth finite
chaos tensors, the adjoint of the full transform is
\[
 (D^{2r}\cN^{-r})^*=\cN^{-r}\delta^{2r}\, .
\]
In dimension one, apply this operator to the constant tensor $1$.  By
\eqref{eq:Hermite-D-delta} and \eqref{eq:chaos-shifts},
\[
 \delta^{2r}1=H_{2r}\, ,
 \qquad
 \cN^{-r}H_{2r}=(2r)^{-r}H_{2r}\, .
\]
Since $H_{2r}(x)=x^{2r}$ plus terms of lower degree,
\cref{lem:Gaussian-moments} and the reverse triangle inequality give
\[
 \norm{H_{2r}}_{L^p(\gamma_1)}\geq c_rp^r\, ,
 \qquad p\geq2\, .
\]
Indeed, the leading term dominates for all sufficiently large $p$, and the
remaining compact interval is absorbed by decreasing $c_r$.  Therefore
\[
 \norm{D^{2r}\cN^{-r}}_{L^q\to L^q}
 =\norm{\cN^{-r}\delta^{2r}}_{L^p\to L^p}
 \geq c_rp^r
 \geq\frac{c_r}{(q-1)^r}\, .
\]
The one-dimensional example embeds in every $d\geq1$, completing the proof.
\end{proof}

\section{Proof of the Bernstein--Markov inequality and its consequences}
\label{sec:proof-Bernstein-Markov}

This section proves \cref{thm:Bernstein-Markov} and its consequence for
polynomial vector fields, \cref{cor:polynomial-divergence}.
After introducing the complex Gaussian measures and the Fock space in
\cref{sec:Fock}, we proceed in four steps.
\begin{enumerate}
\item \emph{Moment recursion.}  \Cref{sec:moment-recursion} uses Gaussian
integration by parts in the Fock space to prove a recursion for moments of
fractional powers of a homogeneous holomorphic polynomial.

\item \emph{Homogeneous comparison.}  \Cref{sec:homogeneous-comparison}
expresses an elliptic complex Gaussian moment as a series in these moments.
The recursion gives monotonicity and hence a comparison with the real
Gaussian moment.

\item \emph{General polynomials and complex rotations.}
\Cref{sec:complex-rotations} homogenizes the even and odd parts of a
polynomial separately, by multiplying their lower degree components with
powers of a normalized sum of squares of auxiliary Gaussian variables, and
obtains a bound for the complex rotations of the polynomial.

\item \emph{Gradient estimate.}  \Cref{sec:cauchy} applies Cauchy's formula
to the complex rotation, bounding the Gaussian directional derivative by
$C\sqrt n$ times the polynomial norm.  Conditioning on the base point
converts this derivative into $m_p$ times the gradient norm, giving the
factor $\sqrt{n/p}$.  We then deduce the estimate for polynomial vector fields.
\end{enumerate}

\subsection{Complex Gaussian measures and the Fock space}
\label{sec:Fock}

The proof of \cref{thm:Bernstein-Markov} evaluates polynomials at complex
Gaussian arguments.  We identify $\mathbb C^M$ with $\R^{2M}$, write
$z=x+iy$, and use the complex-bilinear dot product $z\cdot w\deq\sum_jz_jw_j$.
In this subsection and in the proof of \cref{lem:Fock-recursion}, we use $\nabla$ to
denote the full real gradient on $\R^{2M}$ and $\nabla^a$ the tensor of
real derivatives of order $a$, to distinguish them from the Gaussian gradient $D$ on $\R^d$.
We will also use two quadratic forms:
\[
 \abs z^2\deq z\cdot\bar z=\sum_{j=1}^M\abs{z_j}^2\, ,
 \qquad
 Q(z)\deq z\cdot z=\sum_{j=1}^Mz_j^2\, .
\]
The holomorphic coordinate derivatives are
$\partial_{z_j}\deq\frac12(\partial_{x_j}-i\partial_{y_j})$.
Let $E$ denote the holomorphic Euler operator and $\Delta_z$ the
holomorphic Laplacian:
\[
 E\deq\sum_{j=1}^Mz_j\partial_{z_j}\, ,
 \qquad
 \Delta_z\deq\sum_{j=1}^M\partial_{z_j}^2\, .
\]
We also write $Q$ for the operator of multiplication by the function
$Q(z)$, so that $Q^kh=Q(z)^kh(z)$; \cref{lem:Fock-IBP} shows that
$Q$ and $\Delta_z$ are adjoint on holomorphic polynomials with respect to
the Fock pairing defined below.
By Euler's theorem, a
holomorphic function $h$ on an open cone satisfies $Eh=\beta h$ if and only
if $h(\lambda z)=\lambda^\beta h(z)$ for all $\lambda>0$; this applies in
particular to local branches of $H^q$ when $H$ is a homogeneous polynomial of
degree $n$, with $\beta=nq$, and to $Q^kH^q$ with $\beta=nq+2k$, where $q$ could be a fractional number.

\paragraph{The standard complex Gaussian.}
The standard complex Gaussian measure on $\mathbb C^M$ is
\[
 \gamma_{\mathbb C}^M(\dd z)\deq\pi^{-M}e^{-\abs z^2}\,\dd z\, ,
\]
where $\dd z$ is Lebesgue measure on $\R^{2M}$.  It is the law of
$Z\deq(X+iY)/\sqrt2$ with $X,Y$ independent standard Gaussian vectors in
$\R^M$, and it is characterized among centered Gaussian laws on $\R^{2M}$ by
\[
 \E[Z_jZ_k]=0\, ,
 \qquad
 \E[Z_j\overline{Z_k}]=\delta_{jk}\, .
\]
It is invariant under the unitary group of $\mathbb C^M$, in particular
under the phase rotations $z\mapsto e^{i\theta}z$.  The monomials are
orthogonal,
\begin{equation}\label{eq:Fock-monomials}
 \int_{\mathbb C^M}z^\alpha\,\overline{z^\beta}\,\gamma_{\mathbb C}^M(\dd z)
 =\alpha!\,\delta_{\alpha\beta}\, ,
\end{equation}
as one sees by expressing each variable in polar coordinates.

\paragraph{The Fock space.}
The Fock space, also called the Segal--Bargmann space, is the closed
subspace of $L^2(\gamma_{\mathbb C}^M)$ consisting of entire functions
\cite{Bargmann1961,Zhu2012}; by \eqref{eq:Fock-monomials}, the normalized
monomials $z^\alpha/\sqrt{\alpha!}$ form an orthonormal basis of it.  We use
the pairing
\begin{equation*}
 \ip fg\deq\pi^{-M}\int_{\mathbb C^M}f\,\overline g\,e^{-\abs z^2}\,\dd z\, ,
\end{equation*}
which is linear in the first variable.  The only structural property of the
Fock space that we need is that multiplication by $z_j$ and differentiation
$\partial_{z_j}$ are mutually adjoint.  This is Gaussian integration by parts in
complex form.

\begin{lemma}[Adjoints in the Fock space]\label{lem:Fock-IBP}
Let $f$ and $g$ be holomorphic on $\mathbb C^M$, with $f$, $g$, $\nabla f$,
and $\nabla g$ of at most polynomial growth.  Then
\begin{equation}\label{eq:Fock-IBP}
 \ip{\partial_{z_j}f}{g}=\ip{f}{z_jg}\, ,
 \qquad
 \ip{z_jf}{g}=\ip{f}{\partial_{z_j}g}\, ,
\end{equation}
and consequently $\ip{Qf}{g}=\ip f{\Delta_z g}$.  Moreover, on
holomorphic functions,
\begin{equation}\label{eq:Fock-commutator-prelim}
 [\Delta_z,Q]
 =\Delta_zQ-Q\Delta_z
 =4E+2M\,\Id\, ,
\end{equation}
where $\Id$ is the identity operator; thus the last term maps $f$ to $2Mf$.
\end{lemma}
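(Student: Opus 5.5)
The plan is to reduce everything to one-dimensional Gaussian integration by parts, first for $z_j$ and then for $\bar z_j$. Write $z_j=x_j+iy_j$, and let $\rho(z)\deq\pi^{-M}e^{-\abs z^2}$. For a single coordinate, Wirtinger calculus gives $\partial_{z_j}\rho=-\bar z_j\rho$. It also gives $\partial_{z_j}\bar g=\overline{\partial_{\bar z_j}g}=0$, since $g$ is holomorphic.

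\emph{Adjoint of $\partial_{z_j}$.} Integrate by parts on $\R^{2M}$ in the real variables $x_j$ and $y_j$; this is what $\partial_{z_j}=\frac12(\partial_{x_j}-i\partial_{y_j})$ amounts to. Boundary terms vanish because $f\bar g\rho$ decays: $f$, $g$, $\nabla f$, $\nabla g$ have polynomial growth and $\rho$ is Gaussian. We obtain
\[
 \int(\partial_{z_j}f)\,\bar g\,\rho
 =-\int f\,\partial_{z_j}(\bar g\rho)
 =-\int f\,\bar g\,\partial_{z_j}\rho
 =\int f\,\bar z_j\,\bar g\,\rho
 =\ip f{z_jg}\, .
\]
The middle step uses $\partial_{z_j}\bar g=0$. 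This proves the first identity in \eqref{eq:Fock-IBP}.

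\emph{Adjoint of $z_j$ and of $Q$.} The second identity follows from the first by conjugate symmetry of the pairing: $\ip{z_jf}g=\overline{\ip g{z_jf}}=\overline{\ip{\partial_{z_j}g}f}=\ip f{\partial_{z_j}g}$. This uses the first identity with the roles of $f$ and $g$ swapped. Iterating twice, $\ip{z_j^2f}g=\ip{f}{\partial_{z_j}^2g}$. Summing over $j$ gives $\ip{Qf}g=\ip f{\Delta_zg}$. At each step the intermediate functions $z_jf$ and $\partial_{z_j}g$ are holomorphic with polynomially growing gradients. For $\partial_{z_j}g$ this needs $\nabla^2g$ of polynomial growth. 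I would note that Cauchy estimates on unit polydiscs give this automatically for entire functions of polynomial growth, so the hypotheses are self-improving.

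\emph{The commutator.} This is a pure Leibniz computation on holomorphic $h$. We have $\partial_{z_j}(z_j^2h)=2z_jh+z_j^2\partial_{z_j}h$. Applying $\partial_{z_j}$ again gives
\[
 \partial_{z_j}^2(z_j^2h)=2h+4z_j\partial_{z_j}h+z_j^2\partial_{z_j}^2h\, .
\]
For $k\neq j$, $\partial_{z_k}^2$ commutes with multiplication by $z_j^2$. Summing over $j$ and $k$ gives $\Delta_z(Qh)=Q\Delta_zh+4Eh+2Mh$, which is \eqref{eq:Fock-commutator-prelim}.

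There is no real obstacle. The only point needing care is justifying the integration by parts, namely the vanishing of boundary terms and the integrability of all products. The Gaussian factor and the polynomial growth hypotheses handle this; one can truncate with a cutoff $\chi(z/R)$ and let $R\to\infty$ by dominated convergence.
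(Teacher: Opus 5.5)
Your proposal is correct and takes the same route as the paper. Both use integration by parts against the Gaussian weight via $\partial_{z_j}\bar g=0$ and $\partial_{z_j}e^{-\abs z^2}=-\bar z_j e^{-\abs z^2}$, then conjugate symmetry for the second identity, iteration for $Q^*=\Delta_z$, and a Leibniz computation for the commutator. You also note that iterating the second identity needs polynomial growth of $\nabla^2 g$, and that Cauchy estimates supply it (such entire functions are in fact polynomials); the paper leaves this small point implicit.
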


\begin{proof}
Since $g$ is holomorphic, $\partial_{z_j}\overline g=0$, and
$\partial_{z_j}e^{-z\cdot\bar z}=-\overline{z_j}\,e^{-\abs z^2}$.  Hence
\[
 \partial_{z_j}\bigl(f\,\overline g\,e^{-\abs z^2}\bigr)
 =(\partial_{z_j}f)\,\overline g\,e^{-\abs z^2}
  -f\,\overline{z_jg}\,e^{-\abs z^2}\, .
\]
The operator $\partial_{z_j}$ is a combination of real partial derivatives, so
the integral of the left-hand side over $\R^{2M}$ vanishes; this is
justified by a cutoff argument using the polynomial growth.
This proves the first identity in \eqref{eq:Fock-IBP},
and the second follows by conjugation and exchanging the roles of $f$ and
$g$.  Applying the second identity twice gives $\ip{Qf}g=\ip f{\Delta_z g}$.
For \eqref{eq:Fock-commutator-prelim}, compute
\[
 \partial_{z_j}^2(Qf)
 =\partial_{z_j}\bigl(2z_jf+Q\partial_{z_j}f\bigr)
 =2f+4z_j\partial_{z_j}f+Q\partial_{z_j}^2f
\]
and sum over $j$.
\end{proof}

\paragraph{Elliptic complex Gaussians.}
For $s\in\R$ and independent standard Gaussians $X,Y$ in $\R^d$, put
\begin{equation*}
 Z_s\deq X\cosh s+iY\sinh s\, .
\end{equation*}
Thus $Z_0=X$ is real, while for $s\neq0$ the vector $Z_s$ is a complex
Gaussian vector with covariance and pseudo-covariance
\begin{equation}\label{eq:Zs-covariances}
 \E\bigl[Z_s\overline{Z_s}^{\mathsf T}\bigr]=(\cosh2s)\,I\, ,
 \qquad
 \E\bigl[Z_sZ_s^{\mathsf T}\bigr]=(\cosh^2s-\sinh^2s)\,I=I\, .
\end{equation}
The pseudo-covariance is the quantity that enters the Hermite generating
function \eqref{eq:Hermite-generating-d}, and it is the same as for $X$.
Hermite polynomials therefore remain orthogonal at the argument $Z_s$, with
norms rescaled by the covariance.

\begin{lemma}[Hermite polynomials at elliptic complex Gaussian arguments]
\label{lem:complex-chaos}
For all $\alpha,\beta\in\mathbb N^d$ and $s\in\R$,
\begin{equation}\label{eq:complex-Hermite-orthogonality}
 \E\bigl[H_\alpha(Z_s)\overline{H_\beta(Z_s)}\bigr]
 =\alpha!\,(\cosh2s)^{\abs\alpha}\,\delta_{\alpha\beta}\, .
\end{equation}
Consequently, if $F=\sum_{m\ge 0} F_m$ is the chaos expansion of a polynomial $F$
on $\R^d$ with real or complex coefficients, then
\begin{equation}\label{eq:complex-chaos-identity}
 \E\abs{F(Z_s)}^2
 =\sum_{m\ge 0} (\cosh2s)^m\,\norm{F_m}_{L^2(\gamma_d;\mathbb C)}^2\, .
\end{equation}
\end{lemma}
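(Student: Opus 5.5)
The plan is to use the generating function \eqref{eq:Hermite-generating-d}, exactly as the paper derived the real orthogonality $\E[H_\alpha H_\beta]=\alpha!\,\delta_{\alpha\beta}$. For $t,s'\in\mathbb C^d$, we compute
\[
 \E\Bigl[\exp\Bigl\{t\cdot Z_s-\tfrac{t\cdot t}{2}\Bigr\}\,
 \overline{\exp\Bigl\{u\cdot Z_s-\tfrac{u\cdot u}{2}\Bigr\}}\Bigr]
\]
for $t,u\in\mathbb C^d$. The conjugate of the second factor is $\exp\{\bar u\cdot\overline{Z_s}-\bar u\cdot\bar u/2\}$. The exponent $W\deq t\cdot Z_s+\bar u\cdot\overline{Z_s}$ is a centered complex-linear functional of the real Gaussian vector $(X,Y)$. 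Hence $\E e^{W}=e^{\E[W^2]/2}$; this holds for complex linear combinations by analytic continuation of the real moment generating function.

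By \eqref{eq:Zs-covariances}, and since $\E[\overline{Z_s}\,\overline{Z_s}^{\mathsf T}]=I$ as well,
\[
 \E[W^2]=t\cdot t+\bar u\cdot\bar u+2(\cosh 2s)\,t\cdot\bar u\, .
\]
The $t\cdot t$ and $\bar u\cdot\bar u$ terms cancel the normalizations, so the expectation equals $\exp\{(\cosh2s)\,t\cdot\bar u\}=\sum_\alpha (\cosh 2s)^{\abs\alpha}\,t^\alpha\bar u^\alpha/\alpha!$. On the other hand, expanding both generating functions gives
\[
 \sum_{\alpha,\beta}\frac{t^\alpha\bar u^\beta}{\alpha!\beta!}\,
 \E\bigl[H_\alpha(Z_s)\overline{H_\beta(Z_s)}\bigr].
\]
Comparing coefficients of $t^\alpha\bar u^\beta$ yields \eqref{eq:complex-Hermite-orthogonality}. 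The coefficients are unique because both sides are real-analytic in $(t,u)$, and they are polynomial in $(t,\bar u)$ jointly, so they are determined.

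The main point needing care is exchanging expectation and the double series. I will justify it by dominated convergence. The series $\sum_\alpha \abs{t}^{\alpha}\abs{H_\alpha(z)}/\alpha!$ is bounded by $C\exp\{c\abs z\}$ locally uniformly in $t$; this follows, for instance, from $\abs{H_k(z)}\le \E\abs{z+iG}^k$, since $H_k(z)=\E(z+iG)^k$. Such an exponential bound is integrable against the Gaussian law of $Z_s$. Alternatively, I could avoid generating functions and compute directly from the product structure, reducing to $d=1$ and using $H_k(z)=\E(z+iG)^k$. The generating-function route is cleaner, though.

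For \eqref{eq:complex-chaos-identity}, write each chaos component $F_m=\sum_{\abs\alpha=m}c_\alpha H_\alpha$ with complex coefficients. Note that $F(Z_s)$ means the polynomial evaluated at $Z_s$, and a polynomial's chaos expansion is a finite Hermite expansion, so $F(Z_s)=\sum_\alpha c_\alpha H_\alpha(Z_s)$ is a finite sum. Expanding $\E\abs{F(Z_s)}^2$ and applying \eqref{eq:complex-Hermite-orthogonality} gives
\[
 \sum_\alpha \abs{c_\alpha}^2\alpha!\,(\cosh 2s)^{\abs\alpha}.
\]
Grouping by $\abs\alpha=m$ and using real orthogonality, $\norm{F_m}_2^2=\sum_{\abs\alpha=m}\abs{c_\alpha}^2\alpha!$, which gives the identity.
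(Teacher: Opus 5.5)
Your proof is correct, and it reaches \eqref{eq:complex-Hermite-orthogonality} by a different, though closely related, route. The paper identifies $H_\alpha(Z_s)$ with the Wick monomial ${:}Z_s^\alpha{:}$, which is possible because the pseudo-covariance $\E[Z_sZ_s^{\mathsf T}]$ is the identity. It then quotes Janson's Wick product formula, which writes $\E[{:}Z_s^\alpha{:}\,{:}\overline{Z_s}^{\beta}{:}]$ as a sum over bijections of products of the covariances $\E[Z_{s,j}\overline{Z_{s,k}}]=(\cosh 2s)\delta_{jk}$.

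You instead rederive exactly this special case from the generating function \eqref{eq:Hermite-generating-d}, in parallel with the paper's own proof of real orthogonality in \cref{sec:Hermite}. Your computation $\E[W^2]=t\cdot t+\bar u\cdot\bar u+2(\cosh 2s)\,t\cdot\bar u$ is right, and the normalizations cancel as you claim. The identity $H_k(z)=\E(z+iG)^k$ gives $\sum_k r^k\abs{H_k(z)}/k!\le e^{r\abs z}\,\E e^{r\abs G}$, an exponential bound integrable against the law of $Z_s$, so exchanging the expectation with the double series is legitimate.

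Your route is self-contained and needs no Wick calculus for complex elements of a Gaussian space; the paper's route is shorter. One phrasing fix: the two sides are not polynomials in $(t,\bar u)$. They are absolutely convergent power series in $(t,v)$, with $v\deq\bar u$ ranging over all of $\mathbb C^d$, and uniqueness of power-series coefficients is what justifies comparing them. Your deduction of \eqref{eq:complex-chaos-identity} is the same as the paper's.
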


\begin{proof}
    The formula~\eqref{eq:complex-Hermite-orthogonality} follows from the standard Wick product formula
    \cite[Theorem~3.9]{Janson1997}, after noting that the Wick monomial ${:}Z_s^\alpha{:}$ equals $H_\alpha(Z_s)$.

    Write $F=\sum_{\alpha\in\mathbb N^d} c_\alpha H_\alpha$, a finite sum.
Orthogonality gives
\[
 \E\abs{F(Z_s)}^2
 =\sum_{\alpha\in\mathbb N^d} \abs{c_\alpha}^2\alpha!\,(\cosh2s)^{\abs\alpha}
 =\sum_{m\geq0}(\cosh2s)^m
       \sum_{\abs\alpha=m}\abs{c_\alpha}^2\alpha!\, .
\]
Since $F_m=\sum_{\abs\alpha=m}c_\alpha H_\alpha$ and
$\norm{F_m}_{L^2(\gamma_d;\mathbb C)}^2
 =\sum_{\abs\alpha=m}\abs{c_\alpha}^2\alpha!$, this is
\eqref{eq:complex-chaos-identity}.
\end{proof}

\subsection{A moment recursion in the Fock space}
\label{sec:moment-recursion}

We prove a recursion for the moments
$\int\abs H^{2q}\abs Q^{2k}\,\dd\gamma_{\mathbb C}^M$ of a homogeneous
holomorphic polynomial $H:\mathbb C^M\to\mathbb C$, where the exponent
$q\geq1$ is real; in the application to \cref{thm:Bernstein-Markov} we
take $q\deq p/2$, so the assumption $p\geq2$ is exactly $q\geq1$. The calculation rests on the adjoint
relation $Q^*=\Delta_z$ of \cref{lem:Fock-IBP}, and is closely related to the Newman--Shapiro identity~\cite[Theorem 3]{NewSha1966}.  For non-integer $q$, the
function $H^q$ need not be single-valued or holomorphic across the zeros
of $H$, so we must justify the adjoint operations using local branches.

\begin{lemma}[Fock space moment recursion]
\label{lem:Fock-recursion}
Let $H:\mathbb C^M\to\mathbb C$ be a non-zero homogeneous holomorphic
polynomial of degree $n\geq1$, let $q\geq1$, and set
\[
 \alpha\deq nq+\frac M2\, ,
 \qquad
 N_k\deq\int_{\mathbb C^M}\abs{H(z)}^{2q}\abs{Q(z)}^{2k}\,
       \gamma_{\mathbb C}^M(\dd z)\, .
\]
Then, for every integer $k\geq0$,
\begin{equation}
 N_{k+1}\geq4(k+1)(\alpha+k)N_k\, .
 \label{eq:Fock-recursion}
\end{equation}
\end{lemma}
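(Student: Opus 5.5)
The plan is to derive a three-term relation among $N_{k-1},N_k,N_{k+1}$ from the Fock-space adjoint $Q^*=\Delta_z$ and the commutator \eqref{eq:Fock-commutator-prelim}, then induct on $k$. Write $\beta\deq nq+2k$ and work formally with the locally defined function $g\deq H^qQ^k$. This is holomorphic on local branches off the zero set $\{H=0\}$ and homogeneous of degree $\beta$, so $Eg=\beta g$ and $N_k=\norm g^2$ with the pairing $\ip\cdot\cdot$.

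\emph{Step 1 (identity for $N_{k+1}$).} Since $|Qg|^2=|H|^{2q}|Q|^{2k+2}$, we have $N_{k+1}=\ip{Qg}{Qg}$. Move one factor $Q$ across as $\Delta_z$ and apply \eqref{eq:Fock-commutator-prelim}:
\[
 N_{k+1}=\ip{\Delta_z(Qg)}{g}
 =\ip{Q\Delta_z g}{g}+\ip{(4E+2M)g}{g}\, .
\]
Moving $Q$ across once more gives $\ip{Q\Delta_z g}{g}=\norm{\Delta_z g}^2$. Since $Eg=\beta g$,
\[
 N_{k+1}=\norm{\Delta_z g}^2+(4\beta+2M)N_k=\norm{\Delta_z g}^2+4(\alpha+2k)N_k\, .
\]

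\emph{Step 2 (lower bound for $\norm{\Delta_z g}^2$).} For $k=0$ we only use $\norm{\Delta_zg}^2\ge0$, which already gives $N_1\ge4\alpha N_0$. For $k\ge1$, set $h\deq H^qQ^{k-1}$, so that $\norm h^2=N_{k-1}$ and $g=Qh$. Then
\[
 \ip{\Delta_z g}{h}=\ip{g}{Qh}=N_k\, ,
\]
and Cauchy--Schwarz yields $\norm{\Delta_zg}^2\ge N_k^2/N_{k-1}$. Here $N_{k-1}>0$, since $H\not\equiv0$.

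\emph{Step 3 (induction).} Assume $N_k\ge4k(\alpha+k-1)N_{k-1}$. Then $N_k^2/N_{k-1}\ge4k(\alpha+k-1)N_k$, and Step 1 gives
\[
 N_{k+1}\ge4\bigl[k(\alpha+k-1)+\alpha+2k\bigr]N_k=4(k+1)(\alpha+k)N_k\, ,
\]
which is \eqref{eq:Fock-recursion}.

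\emph{Main obstacle.} The hard part is justifying the adjoint identities of \cref{lem:Fock-IBP} for the multivalued, non-holomorphic $g$ and $h$. Every pairing used has a single-valued integrand, because branch ambiguities cancel between a function and the conjugate it is paired with. For example, $\ip{\Delta_z(Qg)}{g}$ and $\ip{\Delta_z g}{h}$ involve only $|H|^{2q-2}$, $|H|^{2q-4}|\nabla H|^2\overline{H}H$-type combinations. The plan is to perform each integration by parts on $\{H\ne0\}$, multiplied by cutoffs $\chi_\varepsilon(|H|)$ that vanish near the zero set together with a large-radius cutoff. The large-radius boundary terms vanish by polynomial growth against $e^{-|z|^2}$. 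Near the zero set, the boundary terms are bounded by $\int|H|^{2q-1}|\nabla H|\,|\nabla\chi_\varepsilon|$, and the bulk integrands are bounded by $|H|^{2q-2}$. Because $q\ge1$, these are locally bounded, and $\{H=0\}$ has real codimension $2$, so the cutoff terms tend to $0$ as $\varepsilon\downarrow0$; if needed, one can instead regularize with $(|H|^2+\varepsilon)^{q}$. This is where the hypothesis $q\ge1$ (that is, $p\ge2$) enters.
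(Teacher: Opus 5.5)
Your Steps 1--3 are the paper's algebraic argument: the identity $N_{k+1}=\norm{\Delta_z g}^2+4(\alpha+2k)N_k$, Cauchy--Schwarz against $H^qQ^{k-1}$, and induction from $N_1\geq4\alpha N_0$. The gap is in the justification, which you yourself flag as the main obstacle: the bounds you state there are not the ones that matter. Since $\Delta_z(H^q)=qH^{q-1}\Delta_zH+q(q-1)H^{q-2}\sum_j(\partial_{z_j}H)^2$, the quantity $\norm{\Delta_z g}^2$ has an integrand of size $\abs H^{2q-4}\abs{\nabla H}^4$, not $\abs H^{2q-2}$. Step 1 produces this quantity through $\ip{Q\Delta_z g}{g}=\norm{\Delta_z g}^2$, and Step 2 needs it to be finite. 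In that integration by parts, the cutoff error contains $\int\abs{\nabla\chi_\varepsilon}\,\abs{\nabla g}\,\abs{\Delta_z g}\,\dd\gamma_{\mathbb C}^M$. This is of order $\varepsilon^{-1}\int_{\{\varepsilon<\abs H<2\varepsilon\}}\abs H^{2q-3}\abs{\nabla H}^4\,\dd\gamma_{\mathbb C}^M\approx\varepsilon^{2q-2}$ near a regular point of $\{H=0\}$. It is not controlled by the term $\int\abs H^{2q-1}\abs{\nabla H}\abs{\nabla\chi_\varepsilon}$ you list, which is only of order $\varepsilon^{2q}$. Transversal integrability of $\abs H^{2q-4}$ and decay of this error both hold exactly when $q>1$. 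That is where the hypothesis really enters; $q=1$ is trivial because $g$ is then a polynomial. Your listed estimates need only $q>0$: local integrability of $\abs H^{2q-2}$ and decay $\varepsilon^{2q}$. So they would equally ``justify'' the recursion for $0<q<1$, where it is false. For $M=1$ and $H(z)=z$ one gets $N_1-4\alpha N_0=q(q-1)\Gamma(q+1)<0$, cf.\ \cref{sec:BM-tightness}; in that range the boundary terms at the zero set do not vanish.

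Two further points. First, ``real codimension $2$'' is only a heuristic at regular points of the zero set, and for a general homogeneous $H$ the zero set can be singular along a cone. The paper handles this in three steps. It first assumes $\nabla H\neq0$ on $\{H=0\}\setminus\{0\}$ and uses $w=H(z)$ as a local coordinate, so that second derivatives of $g_k$ are $O(\abs w^{q-2})$. It then controls the origin by homogeneity. Finally, it approximates a general $H$ by such polynomials, using that $N_k$ depends continuously on the coefficients. Your cutoff route can also be completed without genericity, for instance via local integrability of $\abs H^{2q-4}\abs{\nabla H}^2$ and the bound $\int_{K\cap\{\varepsilon<\abs H<2\varepsilon\}}\abs{\nabla H}^2\,\dd z=O(\varepsilon^2)$ on compacts $K$. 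Both follow from plurisubharmonicity of $\abs H^{2(q-1)}$ and of $\log(\abs H^2+\varepsilon^2)$, but they must be argued. Second, your fallback of regularizing with $(\abs H^2+\varepsilon)^q$ does not work. It destroys holomorphy, and with it both $Q^*=\Delta_z$ and $Eg=\beta g$, on which Steps 1 and 2 rest.
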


\begin{proof}
We first give the algebraic argument, then justify the adjoint operations
for fractional powers.

On $\Omega\deq\mathbb C^M\setminus\{H=0\}$, choose local branches $h\deq H^q$ and put
\[
 g_k\deq Q^kh\, .
\]
On overlaps, two branches differ by a constant of modulus one.  Their
derivatives acquire the same factor, so the pointwise norms and the
integrands of all pairings below are independent of the branch.
All norms and pairings of these local branches are defined by these
single-valued integrands.
Homogeneity gives
\begin{equation}
 \norm{g_k}_{L^2(\gamma_{\mathbb C}^M)}^2=N_k>0\, ,
 \qquad
 Eg_k=(nq+2k)g_k\, .
 \label{eq:gk-norm-Euler}
\end{equation}

\emph{The algebraic argument.}
For integer $q$, the functions $g_k$ are polynomials, so the adjoint relation
$Q^*=\Delta_z$ follows from \cref{lem:Fock-IBP}.  For non-integer $q>1$, we
first make the calculation under the same adjoint relations; the
justification and approximation follow below.  The commutator identity
\eqref{eq:Fock-commutator-prelim} and \eqref{eq:gk-norm-Euler} give
\begin{align}
 N_{k+1}
 &=\ip{Qg_k}{Qg_k}=\ip{g_k}{\Delta_z(Qg_k)} \notag\\
 &=\ip{g_k}{Q\Delta_z g_k}
   +4(\alpha+2k)N_k \notag\\
 &=\norm{\Delta_z g_k}_2^2
   +4(\alpha+2k)N_k\, .
 \label{eq:Fock-norm-identity}
\end{align}
For $k\geq1$, the same adjoint relation and $g_k=Qg_{k-1}$ give
\[
 N_k=\ip{g_{k-1}}{\Delta_z g_k}\, .
\]
Hence Cauchy--Schwarz yields
\begin{equation}
 \norm{\Delta_z g_k}_2^2
 \geq\frac{N_k^2}{N_{k-1}}\, .
 \label{eq:Fock-Cauchy-Schwarz}
\end{equation}

The base case $N_1\geq4\alpha N_0$ follows from
\eqref{eq:Fock-norm-identity}.  If, inductively,
\[
 N_k\geq4k(\alpha+k-1)N_{k-1}\, ,
\]
then \eqref{eq:Fock-Cauchy-Schwarz} and
\eqref{eq:Fock-norm-identity} imply
\begin{align*}
 N_{k+1}
 &\geq4\bigl\{k(\alpha+k-1)+\alpha+2k\bigr\}N_k
 =4(k+1)(\alpha+k)N_k\, .
\end{align*}
This proves the recursion once the adjoint operations are justified.

\emph{Justification for fractional powers.}
Suppose $q>1$ and $M\geq2$, and first assume that the derivatives
$\partial_{z_1}H,\ldots,\partial_{z_M}H$ do not vanish simultaneously at
any point $z\ne0$ with $H(z)=0$.  Near each such point, we can use $w=H(z)$
as one of the local coordinates, and second derivatives of $g_k$ are
$O(\abs w^{q-2})$.
Thus $q>1$ ensures local square-integrability and vanishing boundary terms
in integration by parts.  Homogeneity controls the origin and Gaussian
decay controls infinity, so the adjoint relations above hold.  For general
$H$, approximate its coefficients by homogeneous polynomials of the same
degree satisfying this derivative condition and pass
\eqref{eq:Fock-recursion} to the limit by dominated convergence.

Finally, if $M=1$, then $H(z)=cz^n$ and polar integration gives
$N_k=\abs c^{2q}\Gamma(nq+2k+1)$.  Consequently,
\begin{align*}
 \frac{N_{k+1}}{N_k}
 &=(nq+2k+1)(nq+2k+2)\\
 &=4(k+1)(\alpha+k)+nq(nq-1)
 \geq4(k+1)(\alpha+k)\, ,
\end{align*}
which proves the recursion in this case as well.
\end{proof}

\subsection{The homogeneous complex Gaussian comparison}
\label{sec:homogeneous-comparison}

The moment recursion gives a comparison between the $p$-th moment of a
homogeneous polynomial at an elliptic complex Gaussian and its moment at a
real Gaussian.

\begin{proposition}[Homogeneous complex Gaussian comparison]
\label{prop:homogeneous-complex-comparison}
Let $H:\mathbb C^M\to\mathbb C$ be a homogeneous holomorphic polynomial of
degree $n\geq0$, and let
$X,Y$ be independent standard real Gaussian vectors in $\R^M$.  For every
$p\geq2$ and $s\in\R$, with $Z_s=X\cosh s+iY\sinh s$,
\begin{equation}
 \norm{H(Z_s)}_{L^p(\gamma_M\otimes\gamma_M;\mathbb C)}
 \leq(\cosh 2s)^{n/2}\,\norm{H(X)}_{L^p(\gamma_M;\mathbb C)}\, .
 \label{eq:homogeneous-complex-comparison}
\end{equation}
\end{proposition}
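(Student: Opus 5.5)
The plan is to rewrite the law of $Z_s$ as a density with respect to the standard complex Gaussian $\gamma_{\mathbb C}^M$, expand that density in powers of $\abs{Q}^2$, and use \cref{lem:Fock-recursion} with $q\deq p/2\geq1$ to show that a suitably normalized moment is monotone in $s$. The case $n=0$ is trivial, and $s=0$ is an equality, so we assume $n\geq1$ and $s\neq0$. By evenness of the law of $Z_s$ in $s$ up to conjugation, which does not change $\abs{H}$ in law, we may also take $s>0$.

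\emph{Step 1: density of $Z_s$.} The real and imaginary parts of $Z_s$ are independent, with variances $\cosh^2s$ and $\sinh^2s$ per coordinate. Writing $x^2=(\abs z^2+\operatorname{Re}Q)/2$ and $y^2=(\abs z^2-\operatorname{Re}Q)/2$ coordinatewise, the density of $Z_s$ on $\mathbb C^M$ is proportional to
\[
 \exp\bigl\{-(\cosh2s\,\abs z^2-\operatorname{Re}Q(z))/\sinh^22s\bigr\}\, .
\]
Set $t\deq1/\cosh2s\in(0,1)$ and substitute $z=\sigma w$ with $\sigma^2=\sinh^22s/\cosh2s$. Then
\[
 \E\abs{H(Z_s)}^p=\sigma^{np}(1-t^2)^{M/2}\int\abs{H(w)}^p e^{t\operatorname{Re}Q(w)}\,\gamma_{\mathbb C}^M(\dd w)\, ,
\]
where the normalizing constant is obtained by taking $H=1$.

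\emph{Step 2: phase averaging.} The measure $\gamma_{\mathbb C}^M$ and $\abs{H}^p$ are invariant under $w\mapsto e^{i\theta}w$, while $Q(e^{i\theta}w)=e^{2i\theta}Q(w)$. Averaging over $\theta$ therefore replaces $e^{t\operatorname{Re}Q}$ by
\[
 I_0(t\abs Q)=\sum_{k\ge0}\frac{t^{2k}\abs Q^{2k}}{4^k(k!)^2}\, .
\]
By monotone convergence this gives
\[
 \E\abs{H(Z_s)}^p=\sigma^{np}(1-u)^{M/2}\sum_k a_ku^k\, ,\qquad u\deq t^2,\quad a_k\deq\frac{N_k}{4^k(k!)^2}\, ,
\]
with $N_k$ as in \cref{lem:Fock-recursion}. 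Since $\sigma^2/\cosh2s=\tanh^22s=1-u$, we get
\[
 f(u)\deq\frac{\E\abs{H(Z_s)}^p}{(\cosh2s)^{np/2}}=(1-u)^{\alpha}\sum_ka_ku^k\, ,\qquad \alpha=nq+\tfrac M2\, .
\]

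\emph{Step 3: monotonicity.} Differentiating termwise on $[0,1)$ gives
\[
 f'(u)=(1-u)^{\alpha-1}\sum_k u^k\bigl[(k+1)a_{k+1}-(\alpha+k)a_k\bigr]\, .
\]
Each bracket is non-negative exactly when $N_{k+1}\geq4(k+1)(\alpha+k)N_k$, which is \eqref{eq:Fock-recursion}. Hence $f$ is nondecreasing in $u$. As $s\downarrow0$ we have $u\uparrow1$ and $Z_s\to X$. Dominated convergence applies because $\abs{H(Z_s)}^p\leq C(\abs X+\abs Y)^{np}$ for $\abs s\le1$, so $f(u)\to\E\abs{H(X)}^p$. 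Thus $f(u)\leq\E\abs{H(X)}^p$ for every $u$, which is \eqref{eq:homogeneous-complex-comparison}.

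The main obstacle is Step 1: getting the Gaussian density, the rescaling $\sigma$, and the normalizing constant exactly right. The exponent of $(1-u)$ must come out equal to the $\alpha$ of \cref{lem:Fock-recursion}, or the match with the recursion fails. Once that identity holds, the monotonicity in Step 3 is a direct rewriting of \eqref{eq:Fock-recursion}. Interchanging the sum, the derivative, and the integral is routine, because the $u^k$-series converges on $[0,1)$ by finiteness of the moment at each $s>0$.
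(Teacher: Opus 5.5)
Your proposal is correct and follows the paper's proof essentially step for step. The paper works with the normalized family $W_t=\sqrt{(1+t)/2}\,X+i\sqrt{(1-t)/2}\,Y$ and obtains the same density rewrite, the same Bessel series $(1-t^2)^\alpha\sum_k t^{2k}N_k/(4^k(k!)^2)$, and the same monotonicity from \eqref{eq:Fock-recursion}; this is exactly your $f(u)$, since $Z_s/\sqrt{\cosh 2s}\stackrel{\mathrm{law}}{=}W_{1/\cosh 2s}$, so parametrizing directly by $s$ and dividing by $(\cosh 2s)^{np/2}$ is only a change of bookkeeping.
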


\begin{remark}
    When $p$ is restricted to be an even integer, then the inequality follows directly from \cref{lem:complex-chaos}, without requiring homogeneity of $H$. Thus, the difficult part is to extend the bound to all real $p\ge 2$.
\end{remark}

\begin{proof}
The assertion is immediate for $H=0$ or $n=0$, so suppose that $H\neq0$ and
$n\geq1$.  Use the moments $N_k$ of \cref{lem:Fock-recursion} with
$q\deq p/2$ and, for $0\leq t\leq1$, define
\[
 W_t\deq\sqrt{\frac{1+t}{2}}\,X
      +i\sqrt{\frac{1-t}{2}}\,Y\, ,
 \qquad
 F(t)\deq\E\abs{H(W_t)}^p\, .
\]
Thus, $W_0$ has law $\gamma_{\mathbb C}^M$ and $W_1=X$.  We will show that
$F$ is non-decreasing, so the real Gaussian gives the largest moment in
this family.

For $0\leq t<1$, write a point of $\mathbb C^M$ as $z=x+iy$, with
$x,y\in\R^M$.  The density of the law of $W_t$, evaluated at $z$ with
respect to Lebesgue measure $\dd x\,\dd y$, is
\[
 \pi^{-M}(1-t^2)^{-M/2}
 \exp\Bigl\{-\frac{\abs x^2}{1+t}-\frac{\abs y^2}{1-t}\Bigr\}\, .
\]
Since
\[
 \frac{\abs x^2}{1+t}+\frac{\abs y^2}{1-t}
 =\frac{\abs z^2-t\operatorname{Re}Q(z)}{1-t^2}\, ,
\]
the change of variables $z=\sqrt{1-t^2}\,w$ and homogeneity give, for
$0\leq t<1$,
\begin{equation}
 F(t)=(1-t^2)^\alpha
 \int_{\mathbb C^M}\abs{H(w)}^p\,
 e^{t\operatorname{Re}Q(w)}\,\gamma_{\mathbb C}^M(\dd w)\, ,
 \qquad
 \alpha=\frac{np+M}{2}\, .
 \label{eq:elliptic-density-identity}
\end{equation}
The exponent $\alpha=nq+M/2$ is the same as in the moment recursion:
homogeneity contributes $np/2$, while the density and Jacobian together
contribute $M/2$.

Global phase invariance gives
$\abs{H(e^{i\theta}w)}=\abs{H(w)}$ and
$Q(e^{i\theta}w)=e^{2i\theta}Q(w)$.  Averaging $e^{r\cos\theta}$ over the
circle produces the modified Bessel function of order zero,
\begin{equation}\label{eq:Bessel-I0}
 \frac1{2\pi}\int_0^{2\pi}e^{r\cos\theta}\,\dd\theta
 =\sum_{k\geq0}\frac{r^{2k}}{4^k\,(k!)^2}\, ,
 \qquad r\in\R\, ,
\end{equation}
because odd powers of $\cos\theta$ average to zero and
$\frac1{2\pi}\int_0^{2\pi}\cos^{2k}\theta\,\dd\theta=\binom{2k}{k}4^{-k}$.
Averaging \eqref{eq:elliptic-density-identity} over $\theta$, applying
\eqref{eq:Bessel-I0} with $r=t\abs{Q(w)}$, and integrating term by term,
which Tonelli's theorem allows since all terms are non-negative, yields
\begin{equation}
 F(t)=(1-t^2)^\alpha
 \sum_{k=0}^\infty\frac{t^{2k}}{4^k\,(k!)^2}\,N_k\, .
 \label{eq:Bessel-expansion}
\end{equation}

Set
\[
 b_k\deq\frac{N_k}{4^k\,(k!)^2}\, ,
 \qquad
 A(u)\deq\sum_{k=0}^\infty b_ku^k\, ,
\]
so that \eqref{eq:Bessel-expansion} reads $F(t)=(1-t^2)^\alpha A(t^2)$.
The coefficients $b_k$ are non-negative and $F(t)<\infty$ for $0\leq t<1$,
so the power series $A$ converges on $[0,1)$.  The recursion
\eqref{eq:Fock-recursion} divided by $4^{k+1}((k+1)!)^2$ reads
$(k+1)b_{k+1}\geq(\alpha+k)b_k$, which says that every coefficient of
\[
 (1-u)A'(u)-\alpha A(u)
 =\sum_{k=0}^\infty\bigl\{(k+1)b_{k+1}-(\alpha+k)b_k\bigr\}u^k
\]
is non-negative.  Hence, for $0\leq u<1$,
\[
 \frac{\dd}{\dd u}\bigl\{(1-u)^\alpha A(u)\bigr\}
 =(1-u)^{\alpha-1}\bigl\{(1-u)A'(u)-\alpha A(u)\bigr\}
 \geq0\, ,
\]
and $F$ is non-decreasing on $[0,1)$.  As $t\uparrow1$, $W_t\to X$ in
every $L^r$ with $r<\infty$, and $H$ is a polynomial, so
$F(t)\to F(1)=\E\abs{H(X)}^p$.  Therefore
\begin{equation}
 F(t)\leq F(1)=\E\abs{H(X)}^p\, ,
 \qquad 0\leq t\leq1\, .
 \label{eq:elliptic-monotonicity}
\end{equation}

For $s\neq0$, let $r\deq\cosh 2s$ and $t\deq r^{-1}$.  Then
\[
 \frac{X\cosh s+iY\sinh s}{\sqrt r}
 \stackrel{\mathrm{law}}=W_t\, .
\]
Homogeneity and \eqref{eq:elliptic-monotonicity} prove
\eqref{eq:homogeneous-complex-comparison}; the case $s=0$ is immediate.
\end{proof}

\subsection{Homogenization and complex rotations}
\label{sec:complex-rotations}

We extend the homogeneous comparison to arbitrary polynomials by
homogenization using auxiliary Gaussian variables.  This gives an $L^p$
bound for their complex rotations.

\begin{lemma}[Complex Gaussian rotation]
\label{lem:complex-Gaussian-rotation}
Let $P:\R^d\to\R$ be a polynomial of degree at most $n$, let $p\geq2$, and
let $X,Y$ be independent standard Gaussian vectors in $\R^d$.  For
$z\in\mathbb C$, set
\[
 U_zP(X,Y)\deq P\bigl(X\cos z+Y\sin z\bigr)\, .
\]

This rotation satisfies
\begin{equation*}
 \norm{U_zP}_{L^p(\gamma_d\otimes\gamma_d;\mathbb C)}
 \leq2\,\bigl(\cosh(2\abs{\operatorname{Im}z})\bigr)^{n/2}\,
       \norm P_{L^p(\gamma_d)}\, .
\end{equation*}
The factor $2$ may be omitted if $p$ is an even integer, or if $P$ is even or odd.
\end{lemma}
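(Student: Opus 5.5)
The plan has three stages: reduce the rotation to the elliptic complex Gaussians $Z_s$, then homogenize the even and odd parts of $P$ so that \cref{prop:homogeneous-complex-comparison} applies, and finally recombine with the triangle inequality, which is where the factor $2$ comes from.

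\emph{Reduction to $Z_s$.} Write $z=a+ib$ with $a,b\in\R$. Expanding $\cos(a+ib)$ and $\sin(a+ib)$ gives
\[
 X\cos z+Y\sin z=\cosh b\,(X\cos a+Y\sin a)+i\sinh b\,(-X\sin a+Y\cos a)\, .
\]
The pair $X'\deq X\cos a+Y\sin a$, $Y'\deq -X\sin a+Y\cos a$ is again a pair of independent standard Gaussian vectors, by rotation invariance of $\gamma_d\otimes\gamma_d$. Hence $U_zP(X,Y)$ has the law of $P(Z_b)$. Since $\cosh 2b$ is even in $b$, it suffices to prove, for every $s\in\R$,
\[
 \norm{P(Z_s)}_p\le 2(\cosh 2s)^{n/2}\norm P_p\, .
\]

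\emph{Even integer $p$.} Here I would argue directly from \cref{lem:complex-chaos}. Note that $\abs{P(Z_s)}^p=\abs{P^{p/2}(Z_s)}^2$, where $P^{p/2}$ is a real polynomial of degree at most $np/2$, and $\cosh 2s\ge1$. Then \eqref{eq:complex-chaos-identity} gives
\[
 \E\abs{P(Z_s)}^p\le(\cosh 2s)^{np/2}\,\E\abs{P(X)}^p\, ,
\]
which is the claim without the factor $2$.

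\emph{General $p\ge2$: homogenization.} Decompose $P=P_{\rm e}+P_{\rm o}$ into its even and odd parts. Suppose $P_{\rm e}=\sum_{j\le m}P_{2j}$, where $P_{2j}$ is the homogeneous component of monomial degree $2j$ and $2m\le n$. Take an auxiliary standard Gaussian $G\in\R^K$, put $S_K(w)\deq Q(w)/K$ for $w\in\mathbb C^K$, and define the homogeneous polynomial of degree $2m$ on $\mathbb C^{d+K}$
\[
 H_K(x,w)\deq\sum_{j\le m}P_{2j}(x)\,S_K(w)^{m-j}\, .
\]
Apply \cref{prop:homogeneous-complex-comparison} with $M=d+K$ and the complex Gaussian $(Z_s,G_s)$, where $G_s$ is built from auxiliary $G,G'$ in the same way as $Z_s$. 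This gives
\[
 \norm{H_K(Z_s,G_s)}_p\le(\cosh 2s)^{m}\norm{H_K(X,G)}_p\, .
\]
Next let $K\to\infty$. Each coordinate of $G_s$ satisfies $\E[(G_s)_j^2]=\cosh^2s-\sinh^2s=1$, since the pseudo-covariance is $1$; the same holds for the real $G$. By the law of large numbers with uniformly bounded moments of all orders, $S_K(G_s)\to1$ and $S_K(G)\to1$ in every $L^r$. Consequently $H_K(Z_s,G_s)\to P_{\rm e}(Z_s)$ and $H_K(X,G)\to P_{\rm e}(X)$ in $L^p$, by H\"older's inequality, because $P_{2j}$ has finite moments of all orders at $Z_s$. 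This yields
\[
 \norm{P_{\rm e}(Z_s)}_p\le(\cosh 2s)^{n/2}\norm{P_{\rm e}}_p\, .
\]
For $P_{\rm o}$ with top degree $2m+1$, I would set $H_K(x,w)\deq\sum_j P_{2j+1}(x)S_K(w)^{m-j}$ and argue identically. If $P$ itself is even or odd, this already gives the bound without the factor $2$.

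\emph{Recombining.} By symmetry of $\gamma_d$, $P_{\rm e}(x)=\tfrac12(P(x)+P(-x))$ satisfies $\norm{P_{\rm e}}_p\le\norm P_p$, and likewise $\norm{P_{\rm o}}_p\le\norm P_p$. The triangle inequality then gives the constant $2$.

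I expect the main obstacle to be the $K\to\infty$ limit. It needs uniform moment control of $S_K(G_s)$, which follows from the explicit chi-square-type structure of $S_K$, together with a careful check that the pseudo-covariance normalization makes the limit equal to $1$ rather than $\cosh 2s$.
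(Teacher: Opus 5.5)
Your proposal is correct and follows the paper's proof essentially step for step: the real rotation $(X',Y')$ reducing to $Z_s$, homogenization of each parity class by powers of the normalized sum of squares (where pseudo-covariance $1$ makes the law-of-large-numbers limit equal to $1$ at the complex argument), the parity projections and triangle inequality giving the factor $2$, and \cref{lem:complex-chaos} applied to $P^{p/2}$ for even $p$. The only cosmetic difference is ordering: you split into $P_{\rm e}$ and $P_{\rm o}$ before homogenizing, whereas the paper first proves the bound for an even or odd $P$ and then applies it to the two parity projections.
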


\begin{proof}
\emph{Reduction to imaginary rotations.}
Write $z\deq a+is$ and define
\[
 X'\deq X\cos a+Y\sin a\, ,
 \qquad
 Y'\deq-X\sin a+Y\cos a\, .
\]
Since $\gamma_d\otimes\gamma_d$ is invariant under orthogonal maps,
$(X',Y')$ has the same law as $(X,Y)$, and
\[
 X\cos z+Y\sin z=X'\cosh s+iY'\sinh s\, .
\]
It therefore suffices to estimate $P(Z_s)$, where
$Z_s=X\cosh s+iY\sinh s$ and $s\in\R$.

\emph{Even and  odd polynomials.}
The assertion is immediate for constant $P$.  Suppose first that $P$ is
non-constant and even or odd.
Write
\[
 P=\sum_{\substack{0\leq j\leq N\\j\equiv N\!\!\!\pmod2}}P_j\, ,
\]
where $P_j$ is homogeneous of degree $j$ and $N\deq\deg P$.  To supply the
missing degree, we multiply $P_j$ by a power of the normalized sum of
squares of auxiliary Gaussian variables; this sum is homogeneous of degree
$2$ and converges to $1$ by the law of large numbers. This trick goes back to Borell~\cite{Bor1978Tail}. More precisely, let
$G=(G_1,\dots,G_L)$ be an auxiliary standard Gaussian vector, independent of
$X,Y$, and put
\[
 S_L(G)\deq\frac1L\sum_{\ell=1}^LG_\ell^2\, ,
 \qquad
 \widetilde P_L(x,G)
 \deq\sum_{\substack{0\leq j\leq N\\j\equiv N\!\!\!\pmod2}}
   P_j(x)\,S_L(G)^{(N-j)/2}\, .
\]
The parity assumption makes every exponent $(N-j)/2$ a non-negative
integer.  Thus, $\widetilde P_L$ is a polynomial homogeneous of total degree
$N$ in $(x,G)$.  If $G'$ is another independent standard Gaussian vector in $\R^L$,
\cref{prop:homogeneous-complex-comparison} gives
\begin{align}
 \norm{\widetilde P_L(
   X\cosh s+iY\sinh s\, ,
   G\cosh s+iG'\sinh s)}_p
 &\leq(\cosh 2s)^{N/2}\,
   \norm{\widetilde P_L(X,G)}_p\, .
 \label{eq:homogenized-comparison}
\end{align}
The laws of large numbers in every finite $L^r$ give
\begin{equation}
 \begin{aligned}
     S_L(G) &\longrightarrow1\, ,\\
 \frac1L\sum_{\ell=1}^L
 (G_\ell\cosh s+iG'_\ell\sinh s)^2
 &\longrightarrow
 \E(G_1\cosh s+iG'_1\sinh s)^2=1\, .
 \end{aligned}
 \label{eq:pseudocovariance-limit}
\end{equation}
Here, $S_L$ uses the holomorphic quadratic form $L^{-1}\sum_{\ell=1}^L z_\ell^2$,
whose mean remains $1$ under the complex rotation by
\eqref{eq:Zs-covariances}.  Put $m_j\deq(N-j)/2$ for the indices in the
definition of $\widetilde P_L$.  Then,
\[
 \widetilde P_L(X,G)-P(X)
 =\sum_j P_j(X)\,\bigl(S_L(G)^{m_j}-1\bigr)\, .
\]
Since $X$ and $G$ are independent, the $L^p$ norm of each summand factors
as $\norm{P_j(X)}_p\,\norm{S_L(G)^{m_j}-1}_p$.
For each fixed integer $m\geq1$, the identity
$a^m-1=(a-1)\sum_{\ell=0}^{m-1}a^\ell$ and H\"older's inequality give
\[
 \norm{S_L(G)^m-1}_p
 \leq\norm{S_L(G)-1}_{2p}
       \sum_{\ell=0}^{m-1}\norm{S_L(G)^\ell}_{2p}
 \longrightarrow0\, .
\]
Indeed, convergence in every finite $L^r$ makes the first factor tend to
zero and keeps the sum bounded; the case $m=0$ contributes zero.
Since only finitely many integral powers occur, the triangle inequality
therefore gives $\widetilde P_L(X,G)\to P(X)$ in $L^p$.
The same argument applies to the polynomial inside the left-hand norm in
\eqref{eq:homogenized-comparison}, replacing $X$ by $Z_s$ and $S_L(G)$ by
\[
 A_L\deq\frac1L\sum_{\ell=1}^L
       (G_\ell\cosh s+iG'_\ell\sinh s)^2\, .
\]
Here $A_L$ is independent of $Z_s$ and tends to $1$ in every finite $L^r$
by \eqref{eq:pseudocovariance-limit}.  The same factorization of powers
and H\"older estimate hold for complex values, so this polynomial
converges to $P(Z_s)$ in $L^p$.
Passing to the limit gives
\begin{equation}
 \norm{P(X\cosh s+iY\sinh s)}_p
 \leq(\cosh 2s)^{N/2}\,\norm{P(X)}_p\, .
 \label{eq:single-parity-comparison}
\end{equation}

\emph{General polynomials.}
Write the projections onto polynomials of even and odd parity,
\[
 P_{\mathrm e}(x)\deq\frac{P(x)+P(-x)}2\, ,
 \qquad
 P_{\mathrm o}(x)\deq\frac{P(x)-P(-x)}2\, .
\]
Since $\gamma_d$ is symmetric, both parity projections are contractions on
$L^p(\gamma_d)$, and both polynomials have degree at most $n$.  Applying
\eqref{eq:single-parity-comparison} separately and using the triangle
inequality gives the factor $2$.

\emph{Even exponents.}
If $p=2q$ with $q\in\mathbb N$, then $P^q$ is a polynomial of degree at
most $nq$.  Applying \cref{lem:complex-chaos} to this polynomial gives
\begin{align*}
 \E\abs{P(Z_s)}^{2q}
 &=\E\abs{P^q(Z_s)}^2\leq(\cosh 2s)^{nq}\,\E\abs{P^q(X)}^2
 =(\cosh 2s)^{nq}\,\E\abs{P(X)}^{2q}\, .
\end{align*}
Taking $2q$-th roots proves the estimate with factor $1$ for even $p$.
\end{proof}

\subsection{Cauchy's formula}
\label{sec:cauchy}

Applying Cauchy's formula to the complex rotations gives the gradient
estimate in \cref{thm:Bernstein-Markov}.  Combining it with \cref{thm:main}
then yields \cref{cor:polynomial-divergence}.

\begin{proof}[Proof of \cref{thm:Bernstein-Markov}]
The map $z\mapsto U_zP$ is a finite sum of entire scalar coefficients
multiplied by polynomials in $(X,Y)$.  It is therefore an entire function with
values in $L^p(\gamma_d\otimes\gamma_d;\mathbb C)$, and
\begin{equation*}
 \frac{\dd}{\dd z}\, U_zP\Big|_{z=0}=Y\cdot DP(X)\, .
\end{equation*}
Conditional on $X$, this is a centered Gaussian with variance
$\abs{DP(X)}^2$.  Thus \cref{lem:Gaussian-linear-forms}(ii) gives
\begin{equation}
 \norm{Y\cdot DP(X)}_p=m_p\,\norm{DP}_p\, .
 \label{eq:conditional-Gaussian-gradient}
\end{equation}
The Banach-valued Cauchy formula of \cref{app:standard-tools} on the
circle $\abs z=\rho$, followed by
Minkowski's inequality and \cref{lem:complex-Gaussian-rotation}, gives
\begin{align*}
 \norm{Y\cdot DP(X)}_p
 &\leq\frac1\rho\sup_{\abs z=\rho}\norm{U_zP}_p \\
 &\leq\frac2\rho\,(\cosh 2\rho)^{n/2}\,\norm P_p
 \leq\frac2\rho\, e^{n\rho^2}\,\norm P_p\, .
\end{align*}
Here we used $\cosh t\leq e^{t^2/2}$.  The factor
$\rho^{-1}e^{n\rho^2}$ is minimized at $\rho=(2n)^{-1/2}$, giving
\begin{equation}
 \norm{Y\cdot DP(X)}_p
 \leq2\sqrt{2en}\,\norm P_p\, .
 \label{eq:directional-polynomial-bound}
\end{equation}
Equations \eqref{eq:directional-polynomial-bound} and
\eqref{eq:conditional-Gaussian-gradient} prove
\eqref{eq:Bernstein-Markov}.  The factor $2$ is absent throughout when the
corresponding refinement in \cref{lem:complex-Gaussian-rotation} applies.

To prove \eqref{eq:Bernstein-Markov-order}, insert the lower bound
$m_p\geq\sqrt{p/e}$ of \cref{lem:Gaussian-moments} into
\eqref{eq:Bernstein-Markov}; this gives the stated constant $2e\sqrt2$.
\end{proof}

\begin{proof}[Proof of \cref{cor:polynomial-divergence}]
Put $P_0\deq P-\E P$, and let $G$ be an auxiliary standard Gaussian vector in
$\R^d$.  Since $p\geq2$, \cref{lem:Gaussian-linear-forms}(iii) gives, pointwise in
$x$,
\[
 \abs{DP(x)}_{\HS}
 =\bigl(\E_G\abs{DP(x)^{\mathsf T}G}^2\bigr)^{1/2}
 \leq\bigl(\E_G\abs{DP(x)^{\mathsf T}G}^p\bigr)^{1/p}\, .
\]
For each fixed $G$, the scalar polynomial
$P_G(x)\deq\ip{G}{P_0(x)}$ has degree at most $n$.  Hence
\begin{align*}
 \norm{DP}_{L^p(\gamma_d;\HS_d)}^p
 &\leq\E_G\norm{DP_G}_{L^p(\gamma_d;\R^d)}^p
 \leq\Bigl(\frac{2\sqrt{2en}}{m_p}\Bigr)^p\,
       \E_G\norm{P_G}_{L^p(\gamma_d)}^p
 =\bigl(2\sqrt{2en}\bigr)^p\,
       \norm{P_0}_{L^p(\gamma_d;\R^d)}^p\, .
\end{align*}
The last identity is \cref{lem:Gaussian-linear-forms}(ii).  Taking $p$-th
roots and applying \cref{thm:main} proves \eqref{eq:polynomial-divergence}, since
$48\,\bigl(2\sqrt{2e}\bigr)=96\sqrt{2e}$.  The last bound follows from
$\abs{\E P}\leq\norm P_{L^p(\gamma_d;\R^d)}$,
$\norm{P-\E P}_{L^p(\gamma_d;\R^d)}\leq2\,\norm P_{L^p(\gamma_d;\R^d)}$,
and $\sqrt p\leq p\sqrt n$.
In the intermediate estimate the factor $2$ may be omitted if $p$ is even or if the components of
$P_0$ have one common parity.
\end{proof}

\section{Tightness of the two inequalities}
\label{sec:tightness}

\subsection{The divergence inequality}

\begin{proof}[Proof of the converse assertion in \cref{thm:main}]
Suppose that \eqref{eq:main-converse} holds, and let $e_1$ be the first
unit vector of $\R^d$.  For the mean term, take $V\equiv e_1$.  Then
$DV=0$, $\abs{\E V}=1$, and $\delta V=X_1$, so \eqref{eq:main-converse}
and \cref{lem:Gaussian-moments} give
\[
 A\geq\norm{\delta V}_{L^p}=m_p\geq\sqrt{\frac pe}\, .
\]

For the derivative term, take $V(x)=x_1e_1$.  Then $\E V=0$,
$\abs{DV}_{\HS}\equiv1$, and $\delta V=X_1^2-1$, so
$B\geq\norm{X_1^2-1}_p$.  Writing $X=X_1$, the reverse triangle inequality
and \cref{lem:Gaussian-moments} give
\[
 \norm{X^2-1}_p
 \geq\norm X_{2p}^2-1
 \geq\frac{2p}{e}-1\, .
\]
For $p\geq e$, this is at least $p/e$.  For $2\leq p\leq e$,
$\norm{X^2-1}_p\geq\norm{X^2-1}_2=\sqrt2\geq p/e$.
Thus $B\geq p/e$, and one may take $c=1/e$ in both lower bounds.
\end{proof}

\subsection{The Bernstein--Markov inequality}
\label{sec:BM-tightness}

\begin{proposition}[Sharpness]
\label{prop:Bernstein-Markov-sharpness}
For polynomials of degree at most $n\geq1$, the optimal coefficient in the
gradient estimate at $p=2$ is $\sqrt n$.
For every $N\geq1$ and $p\geq2$, the polynomial $P_N(x)\deq\prod_{j=1}^Nx_j$
satisfies the non-asymptotic lower bound
\begin{equation}
 \frac{\norm{DP_N}_{L^p(\gamma_N;\R^N)}}
      {\norm{P_N}_{L^p(\gamma_N)}}
 \geq\sqrt{\frac{N}{p-1}}\, .
 \label{eq:product-sharpness-finite}
\end{equation}
Moreover, for every fixed $p>1$,
\begin{equation}
 \lim_{N\to\infty}
 \frac{\norm{DP_N}_{L^p(\gamma_N;\R^N)}}
      {\sqrt N\,\norm{P_N}_{L^p(\gamma_N)}}
 =\frac1{\sqrt{p-1}}\, .
 \label{eq:tensor-product-sharpness}
\end{equation}
Thus, \eqref{eq:Bernstein-Markov-order} has the optimal joint order for
$p\geq2$.
\end{proposition}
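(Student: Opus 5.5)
The plan has three pieces: the $p=2$ claim from the chaos identity, the product bound \eqref{eq:product-sharpness-finite} from a one-dimensional moment ratio, and the limit \eqref{eq:tensor-product-sharpness} from a central limit argument.

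\emph{The case $p=2$.} By \eqref{eq:Du-chaos}, $\norm{DP}_2^2=\sum_{m\leq n}m\norm{P_m}_2^2\leq n\norm{P}_2^2$, with equality when $P$ lies in $\mathcal C_n$. For instance $P=H_n(x_1)$ gives equality. Hence the optimal coefficient at $p=2$ is exactly $\sqrt n$.

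\emph{The product polynomial, finite bound.} Write $P_N=\prod_j x_j$, so $D_jP_N=\prod_{k\neq j}x_k$ and
\[
 \abs{DP_N}^2=P_N^2\sum_{j=1}^N x_j^{-2}\, .
\]
Since $p/2\geq1$, Jensen's inequality applied to the average $\frac1N\sum_j x_j^{-2}$ fails in the wrong direction, so I would instead use power-mean convexity in the form
\[
 \Bigl(\sum_j a_j\Bigr)^{p/2}\geq N^{p/2-1}\sum_j a_j^{p/2}
\]
for $a_j\geq0$. Take $a_j=\prod_{k\neq j}x_k^2$. Then $\E\abs{DP_N}^p\geq N^{p/2-1}\sum_j\E\prod_{k\neq j}\abs{x_k}^p=N^{p/2}m_p^{p(N-1)}$. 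Also $\E\abs{P_N}^p=m_p^{pN}$. So the ratio is at least $\sqrt N/m_p$.

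It remains to show $m_p\leq\sqrt{p-1}$ for $p\geq2$. This is the standard hypercontractive moment bound $\norm X_p\leq\sqrt{p-1}\norm X_2$ for Gaussian linear forms. Equivalently, $\norm{H_1}_p\leq(p-1)^{1/2}\norm{H_1}_2$ by Nelson's hypercontractivity on the first chaos; I would cite this or verify it from the Gamma-function formula for $m_p$ (presumably part of \cref{lem:Gaussian-moments}). This gives \eqref{eq:product-sharpness-finite}.

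\emph{The limit.} Factor out $\abs{P_N}$ to get
\[
 \frac{\norm{DP_N}_p}{\sqrt N\norm{P_N}_p}
 =\biggl(\frac{\E\bigl[\abs{P_N}^p\,\bigl(\frac1N\sum_j x_j^{-2}\bigr)^{p/2}\bigr]}{\E\abs{P_N}^p}\biggr)^{1/p}.
\]
The natural step is to tilt the measure: under the product probability measure with density $\abs{x_j}^p/m_p^p$ in each coordinate, the $x_j$ are i.i.d. Then $\frac1N\sum_j x_j^{-2}$ converges by the law of large numbers to the tilted mean
\[
 \E\abs{x}^{p-2}/\E\abs x^p=\frac{1}{p-1},
\]
using Gaussian integration by parts $\E\abs x^p=(p-1)\E\abs x^{p-2}$. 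This holds for $p>1$, and the mean is finite since $p-2>-1$.

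The main obstacle is upgrading the law of large numbers to convergence of the $p/2$-th moment when $1<p<2$: there $x^{-2}$ under the tilted law has only $1/p'$-type integrability near zero. For $p\geq2$ I would use uniform integrability via the Marcinkiewicz--Zygmund or von Bahr--Esseen inequality for averages of i.i.d. variables with finite $r$-th moments for some $r>p/2$. I would check that $\E\abs x^{p-2r}<\infty$ requires $r<(p+1)/2$, so such $r$ exist. For $1<p<2$ the exponent $p/2<1$, and I would instead use concavity together with a truncation argument. Taking the $p$-th root gives $(p-1)^{-1/2}$, which is \eqref{eq:tensor-product-sharpness}, and the joint optimality of order $\sqrt{n/p}$ follows since $p-1\asymp p$ for $p\geq2$.
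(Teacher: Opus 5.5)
\paragraph{Gap in the finite bound.} Your proof of \eqref{eq:product-sharpness-finite} uses a power-mean inequality in the wrong direction. For $r\deq p/2\geq1$ and $a_j\geq0$, convexity gives $\bigl(\sum_ja_j\bigr)^r\leq N^{r-1}\sum_ja_j^r$. This is the same Jensen inequality over the index $j$ that you yourself noticed points the wrong way. The best general lower bound is $\bigl(\sum_ja_j\bigr)^r\geq\sum_ja_j^r$, and that would give only $N^{1/p}/m_p$.

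Already for $N=2$, $p=4$, your step asserts $\E(x_1^2+x_2^2)^2\geq2(\E x_1^4+\E x_2^4)=12$, while the left side equals $8$. The intermediate claim $\norm{DP_N}_p\geq(\sqrt N/m_p)\norm{P_N}_p$ is false in general. At $p=4$ one computes $\E\abs{DP_N}^4=3^{N-2}(N^2+2N)$ and $\E P_N^4=3^N$. So the fourth power of the ratio is $(N^2+2N)/9$, which is smaller than $N^2/m_4^4=N^2/3$ for every $N\geq2$. More conceptually, $m_p^2<p-1$ strictly for $p>2$, so your claim would contradict the limit \eqref{eq:tensor-product-sharpness} that you prove next. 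The detour through $m_p\leq\sqrt{p-1}$ is therefore a red herring: the constant $(p-1)^{-1/2}$ is the tilted mean of $x^{-2}$, not a bound on $m_p$.

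\paragraph{Repair.} Use the tilt you already introduce for the limit. Set $\nu_p(\dd x)\deq\abs x^pm_p^{-p}\,\gamma_1(\dd x)$, $Z_j\deq x_j^{-2}$, $A_N\deq\frac1N\sum_jZ_j$ and $a\deq(p-1)^{-1}$. Then
\[
\frac{\norm{DP_N}_p}{\sqrt N\,\norm{P_N}_p}=\bigl[\E_{\nu_p^{\otimes N}}A_N^{p/2}\bigr]^{1/p}.
\]
Apply Jensen's inequality to the expectation, not to the average over $j$. For $p\geq2$ this goes the right way and gives at least $(\E_{\nu_p}Z)^{1/2}=(p-1)^{-1/2}$ for every $N$, by \eqref{eq:moment-recurrence}. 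This is exactly the paper's proof.

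\paragraph{The other parts.} Your $p=2$ argument and your limit for $p\geq2$ are fine. For $1<p<2$ no truncation is needed, and your remark about weak integrability is inaccurate. In fact $\E_{\nu_p}Z^r<\infty$ for all $r<(p+1)/2$, which includes $r=1$, so $\E_{\nu_p}Z=a$ and the $L^1$ law of large numbers applies. Then $\E\abs{A_N-a}^{p/2}\leq(\E\abs{A_N-a})^{p/2}\to0$, together with $\abs{x^{p/2}-y^{p/2}}\leq\abs{x-y}^{p/2}$, passes to the limit.
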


\begin{proof}
If $P$ belongs to the $n$-th Wiener chaos, Gaussian integration
by parts gives
\[
 \norm{DP}_2^2=n\,\norm P_2^2\, ,
\]
which proves the first assertion.

For the remaining assertions, let $G_1,\dots,G_N$ be independent standard
Gaussians.  Since
the zero set of $P_N$ is null,
\[
 \norm{DP_N(G)}
 =\abs{P_N(G)}\,\Bigl(\sum_{j=1}^NG_j^{-2}\Bigr)^{1/2}
 \quad\text{almost surely}\, .
\]
Let $\nu_p$ be the probability measure on $\R$ defined by
\[
 \nu_p(\dd x)\deq\frac{\abs x^p}{m_p^p}\,\gamma_1(\dd x)\, ,
\]
and set $Z(x)\deq x^{-2}$.  A change of measure gives
\begin{equation}
 \frac{\norm{DP_N}_p}{\sqrt N\,\norm{P_N}_p}
 =\Bigl[
   \E_{\nu_p^{\otimes N}}
   \Bigl(\frac1N\sum_{j=1}^NZ_j\Bigr)^{p/2}
  \Bigr]^{1/p}\, .
 \label{eq:tilted-product-identity}
\end{equation}
The moment recurrence \eqref{eq:moment-recurrence} gives
\[
 \E_{\nu_p}Z
 =\frac{\E\abs g^{p-2}}{\E\abs g^p}
 =\frac1{p-1}\, ,
 \qquad
 \E_{\nu_p}Z^{p/2}=\frac1{m_p^p}<\infty\, .
\]
For $p\geq2$, Jensen's inequality gives
\[
 \E_{\nu_p^{\otimes N}}
 \Bigl(\frac1N\sum_{j=1}^NZ_j\Bigr)^{p/2}
 \geq\bigl(\E_{\nu_p}Z\bigr)^{p/2}
 =(p-1)^{-p/2}\, .
\]
Together with \eqref{eq:tilted-product-identity}, this proves
\eqref{eq:product-sharpness-finite} for every degree $N$ and exponent
$p\geq2$.

If $p\geq2$, the $L^{p/2}$ law of large numbers applies.  If $1<p<2$, the
$L^1$ law of large numbers and Jensen's inequality give, with
$A_N\deq N^{-1}\sum_{j=1}^NZ_j$ and $a\deq(p-1)^{-1}$,
\[
 \E\abs{A_N-a}^{p/2}
 \leq\bigl(\E\abs{A_N-a}\bigr)^{p/2}\longrightarrow0\, .
\]
Thus, $A_N$ converges in $L^{p/2}$ to $a$.  Passing to the limit in
\eqref{eq:tilted-product-identity} is immediate when $p\geq2$; when $p<2$ it
follows from
$\abs{x^{p/2}-y^{p/2}}\leq\abs{x-y}^{p/2}$ for $x,y\geq0$.  This proves
\eqref{eq:tensor-product-sharpness}.
\end{proof}

At $p=1$, the same product polynomials disprove
\cref{conj:Eskenazis-Ivanisvili}.  Indeed,
\eqref{eq:tilted-product-identity} remains valid with
$\nu_1(\dd x)=\abs x\,\gamma_1(\dd x)/m_1$, whereas
$\E_{\nu_1}Z=m_1^{-1}\int_{\R}\abs x^{-1}\,\dd\gamma_1=\infty$.
For each $L>0$, apply the law of large numbers and bounded convergence
to the square root of the average of $\min\{Z_j,L\}$.  Since
$Z_j\geq\min\{Z_j,L\}$, this gives
\[
 \liminf_{N\to\infty}
 \frac{\norm{DP_N}_{L^1(\gamma_N;\R^N)}}
      {\sqrt N\,\norm{P_N}_{L^1(\gamma_N)}}
 \geq\bigl(\E_{\nu_1}\min\{Z,L\}\bigr)^{1/2}\, .
\]
Letting $L\to\infty$ shows that the ratio diverges, so no dimension-free
constant exists at $p=1$.

The lower bound in \eqref{eq:tensor-product-sharpness} is the reason for the
denominator $p-1$ in the conjectural estimate for $1<p<2$.  The proof of
\cref{thm:Bernstein-Markov} cannot simply be continued into that range.  To
see this directly, take $M=1$, $H(z)=z$, and $q=p/2<1$ in
\cref{lem:Fock-recursion}.  Then,
\[
 N_0=\Gamma(q+1)\, ,
 \qquad
 N_1=\Gamma(q+3)\, ,
 \qquad
 \alpha=q+\frac12\, .
\]
The asserted recursion would require
\[
 (q+1)(q+2)\geq4q+2\, ,
\]
which is equivalent to $q(q-1)\geq0$ and is false.  Correspondingly, for
small $s$, if $X,Y$ are independent standard real Gaussians,
\[
 \log\frac{\norm{X\cosh s+iY\sinh s}_p}{\norm X_p}
 =\frac{p}{2(p-1)}\,s^2+o(s^2)\, ,
\]
whereas
$\log(\cosh(2s)^{1/2})=s^2+o(s^2)$.  The homogeneous comparison used in
\cref{prop:homogeneous-complex-comparison} therefore fails immediately below
$p=2$.  To justify the first expansion, differentiation at $s^2=0$ is
dominated when $p>1$, and the recurrence \eqref{eq:moment-recurrence}
gives
\[
 \E\abs{X\cosh s+iY\sinh s}^p
 =\E\abs X^p\,
  \Bigl\{1+\frac{p^2}{2(p-1)}\,s^2+o(s^2)\Bigr\}\, .
\]
Proving the expected $\sqrt{n/(p-1)}$ estimate will require a different
comparison principle.

\newpage
\appendix

\section{Analytic tools and supplementary proofs}
\label{app:analytic-tools}

This appendix collects the analytic material used in the proofs.  We first
record standard analytic tools and Gaussian moment computations, then derive
a Hilbert-valued Gaussian randomization estimate.  We finish with the
Bochner identity, compactness, and the zero-set lemma used in the
Calder\'on--Zygmund decomposition.

\subsection{Standard analytic tools}
\label{app:standard-tools}

\paragraph{Banach-valued Cauchy formula.}
Let $B$ be a complex Banach space and let $\Phi:\mathbb C\to B$ be entire,
that is, $\Phi(z)=\sum_{k\geq0}b_kz^k$ with $b_k\in B$ and
$\sum_{k\ge 0} \norm{b_k}_B\,\rho^k<\infty$ for every $\rho>0$.  Then, for every
$\rho>0$,
\[
 \Phi'(0)=\frac1{2\pi i}\oint_{\abs z=\rho}\frac{\Phi(z)}{z^2}\,\dd z\, ,
 \qquad\text{so that}\qquad
 \norm{\Phi'(0)}_B\leq\frac1\rho\sup_{\abs z=\rho}\norm{\Phi(z)}_B\, .
\]
In \cref{sec:proof-Bernstein-Markov} this is applied with
$B\deq L^p(\gamma_d\otimes\gamma_d;\mathbb C)$ and
$\Phi(z)\deq P(X\cos z+Y\sin z)$ for a polynomial $P$; expanding
$P(X\cos z+Y\sin z)$ in monomials shows that $\Phi$ is a finite sum of entire
scalar functions of $z$ multiplied by fixed elements of $B$, hence entire in
the above sense.

\paragraph{Gaussian concentration.}
If $f:\R^d\to\R$ is $L$-Lipschitz, then $f\in L^1(\gamma_d)$ and
$\gamma_d\{f-\E f\geq t\}\leq e^{-t^2/(2L^2)}$ for every $t\geq0$
\cite{Ledoux2001}.  This is used only in \cref{app:bounded-derivative}.

\subsection{Gaussian moments and linear forms}
\label{app:moments}

We use the formulas and bounds below to track the constants in
both main inequalities and in the tightness examples.

\begin{lemma}[Gaussian moments]\label{lem:Gaussian-moments}
For every $p>0$,
\begin{equation*}
 m_p^p\deq \E\abs g^p=\frac{2^{p/2}}{\sqrt\pi}\,\Gamma\Bigl(\frac{p+1}{2}\Bigr)\,,
\end{equation*}
and for $p>1$,
\begin{equation}\label{eq:moment-recurrence}
 \E\abs g^p=(p-1)\,\E\abs g^{p-2}\, .
\end{equation}
For every $p\geq2$,
\begin{equation}\label{eq:mp-two-sided}
 \sqrt{\frac pe}\leq m_p\leq\sqrt{p-1}\, ;
\end{equation}
in particular $c\sqrt p\leq m_p\leq C\sqrt p$ on $[2,\infty)$ with universal
constants $c,C>0$.
\end{lemma}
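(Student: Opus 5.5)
The formula for $m_p^p$ comes from a direct computation in polar form on the line. By symmetry,
\[
 \E\abs g^p=\frac{2}{\sqrt{2\pi}}\int_0^\infty x^pe^{-x^2/2}\,\dd x .
\]
The substitution $u=x^2/2$, so that $x=(2u)^{1/2}$ and $\dd x=(2u)^{-1/2}\dd u$, turns this into
\[
 \frac{2}{\sqrt{2\pi}}\,2^{(p-1)/2}\int_0^\infty u^{(p-1)/2}e^{-u}\,\dd u
 =\frac{2^{p/2}}{\sqrt\pi}\,\Gamma\Bigl(\frac{p+1}2\Bigr),
\]
which is valid for every $p>0$ (indeed for $p>-1$). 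The recurrence \eqref{eq:moment-recurrence} then follows from $\Gamma(z+1)=z\Gamma(z)$ with $z=(p-1)/2$: the prefactor $2^{p/2}/2^{(p-2)/2}=2$ times $(p-1)/2$ gives $p-1$. Equivalently, one can integrate by parts with $x^{p-1}\cdot xe^{-x^2/2}$. The recurrence requires $p-2>-1$, which is exactly the hypothesis $p>1$.

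For the upper bound in \eqref{eq:mp-two-sided}, I would first note that $\E\abs g^2=1$ and then combine Lyapunov/H\"older with the recurrence. By log-convexity of $r\mapsto\E\abs g^r$, for $p\geq2$,
\[
 \E\abs g^{p-2}\leq(\E\abs g^p)^{(p-2)/p}.
\]
Inserting this into \eqref{eq:moment-recurrence} gives $m_p^p\leq(p-1)\,m_p^{p-2}$, that is, $m_p^2\leq p-1$.

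For the lower bound $m_p\geq\sqrt{p/e}$ I would try a direct pointwise estimate rather than Stirling. By Markov-type reasoning, $\E\abs g^p\geq t^p\,\Pp(\abs g\geq t)$, but this loses constants, so instead I would use the recurrence iteratively together with log-convexity in the reverse direction. The cleanest route is the following. The function $\phi(p)\deq\log\E\abs g^p-\frac p2\log\frac pe$ should be shown to satisfy $\phi\geq0$ on $[2,\infty)$. Since $\Gamma$ is log-convex, one can use the standard bound $\Gamma(x+1)\geq(x/e)^x$ for $x>0$, which follows from $\Gamma(x+1)=\int_0^\infty u^xe^{-u}\,\dd u\geq\int_x^\infty u^xe^{-u}\,\dd u$ together with $u^x\ge x^x$ there, or more sharply from the inequality $e^u\geq u^x/(x/e)^x\cdot$(const). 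With $x=(p-1)/2$ this yields
\[
 m_p^p\geq\frac{2^{p/2}}{\sqrt\pi}\Bigl(\frac{p-1}{2e}\Bigr)^{(p-1)/2}.
\]
I would compare this with $(p/e)^{p/2}$. After taking logarithms, the needed inequality reduces to a one-variable inequality in $p$, namely
\[
 \tfrac12\log2-\tfrac12\log\pi+\tfrac12\log\tfrac{1}{p}\cdot\text{(terms)}\geq\cdots .
\]

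This last elementary verification is the main obstacle, since the crude bound $\Gamma(x+1)\geq(x/e)^x$ may be too lossy near $p=2$, where $m_2=1$ while $\sqrt{2/e}\approx0.86$. My fallback is to prove monotonicity directly. I would set $\psi(p)\deq\log m_p-\frac12\log\frac pe$, check $\psi(2)=\frac12\log\frac e2>0$, and show $\psi$ does not drop below $0$. The recurrence gives
\[
 \psi(p+2)-\psi(p)=\frac{1}{p+2}\log(p+1)-\Bigl(\frac1{p+2}-\frac1p\Bigr)\log m_p^p-\cdots,
\]
which is awkward. A better option is Stirling's lower bound with remainder, $\Gamma(x+1)\geq\sqrt{2\pi x}\,(x/e)^x$. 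With $x=(p-1)/2$ this gives
\[
 m_p^p\geq\sqrt2\,\bigl((p-1)/e\bigr)^{p/2}\bigl(1-1/p\bigr)^{0}\cdots .
\]
Checking this against $(p/e)^{p/2}$ amounts to
\[
 \sqrt2\,\sqrt{e}\,\Bigl(1-\frac1p\Bigr)^{p/2}\geq1,
\]
and this holds because $(1-1/p)^{p/2}\geq(1/2)^{1}$ at $p=2$ and the expression is increasing in $p$. The two-sided $c\sqrt p\leq m_p\leq C\sqrt p$ is then immediate, with $c=e^{-1/2}$ and $C=1$.
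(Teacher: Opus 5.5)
Your final argument is correct and essentially the paper's: you get the formula and recurrence from the Gamma function, the upper bound from the recurrence plus monotonicity of Gaussian $L^r$ norms, and the lower bound from Stirling's inequality. The only difference is bookkeeping. You apply Stirling as $\Gamma(x+1)\geq\sqrt{2\pi x}\,(x/e)^x$ at $x=(p-1)/2$ and finish with the monotonicity of $(1-1/p)^{p/2}$ and $\sqrt{e/2}>1$, whereas the paper applies it at $(p+1)/2$ and finishes with Bernoulli's inequality.

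In a write-up, drop the abandoned detours and fix the intermediate display. It should read $m_p^p\geq\sqrt{2e}\,\bigl((p-1)/e\bigr)^{p/2}$; the factor $\sqrt e$ is missing there, although your final check correctly includes it.
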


\begin{proof}
The Gaussian moment formula is standard; see
\cite[Exercise~2.5.1]{Ver18HighDimProb}.

For the upper bound in \eqref{eq:mp-two-sided}, the case $p=2$ is $m_2=1$.
For $p>2$, the recurrence and monotonicity of the Gaussian $L^p$ norms give
$m_p^p=(p-1)\,m_{p-2}^{p-2}\leq(p-1)\,m_p^{p-2}$, hence $m_p^2\leq p-1$.
For the lower bound, Stirling's inequality
\cite[Eq.~(5.6.1)]{NISTDLMF}, applied at $(p+1)/2$, gives
\[
 m_p\geq\Bigl(\frac2e\Bigr)^{1/(2p)}\,
          \sqrt{\frac{p+1}{e}}
     \geq\sqrt{\frac pe}\, .
\]
The last inequality follows from Bernoulli's inequality,
$(1+1/p)^p\geq2>e/2$ for $p\geq1$.
\end{proof}

By H\"older's inequality, $p\mapsto m_p$ is non-decreasing on $(0,\infty)$,
and $m_2=1$.

The following identities relate Gaussian linear forms to Euclidean norms.
In particular, the conditional identity converts the estimate for
$Y\cdot DP(X)$ into the gradient bound \eqref{eq:Bernstein-Markov}.

\begin{lemma}[Gaussian linear forms]\label{lem:Gaussian-linear-forms}
Let $G\sim\gamma_d$ and $0<p<\infty$.
\begin{enumerate}[label=\textup{(\roman*)}]
\item For $v\in\R^d$, the variable $\ip Gv$ is a centered Gaussian with
variance $\abs v^2$.  Consequently,
\[
 \norm{\ip Gv}_{L^p(\gamma_d)}=m_p\,\abs v\, .
\]
\item If $v:\R^d\to\R^d$ is measurable and $X\sim\gamma_d$ is independent
of $G$, then
\[
 \norm{\ip{G}{v(X)}}_{L^p(\gamma_d\otimes\gamma_d)}
 =m_p\,\norm v_{L^p(\gamma_d;\R^d)}\, .
\]
\item For a $d\times d$ matrix $A$, $\E\abs{A^{\mathsf T}G}^2=\abs A_{\HS}^2$.
Hence, for $p\geq2$,
$\abs A_{\HS}\leq(\E\abs{A^{\mathsf T}G}^p)^{1/p}$.
\end{enumerate}
\end{lemma}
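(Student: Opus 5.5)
The three parts are elementary Gaussian facts, so the plan is a short argument for each, in order, with rotation invariance of $\gamma_d$ and Fubini doing most of the work.

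For (i), rotation invariance of $\gamma_d$ is the key. The pairing $\ip Gv$ is a linear combination of independent centered Gaussians. It is therefore centered Gaussian, with variance $\sum_j v_j^2=\abs v^2$. If $v\neq0$, then $\ip Gv\stackrel{\mathrm{law}}=\abs v\,g$ with $g\sim\gamma_1$. Hence $\E\abs{\ip Gv}^p=\abs v^p\,m_p^p$, and taking $p$-th roots gives the claim. The case $v=0$ is trivial.

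For (ii), condition on $X$. Since $G$ is independent of $X$, Fubini (Tonelli, as the integrand is non-negative) gives
\[
 \E\abs{\ip{G}{v(X)}}^p=\E_X\bigl[\E_G\abs{\ip G{v(X)}}^p\bigr]\, .
\]
Applying (i) pointwise in $X$ with the fixed vector $v(X)$, the inner expectation equals $m_p^p\abs{v(X)}^p$. Integrating in $X$ yields $m_p^p\,\norm v_{L^p(\gamma_d;\R^d)}^p$. This holds as an identity in $[0,\infty]$, so no integrability assumption is needed.

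For (iii), expand $\abs{A^{\mathsf T}G}^2=\sum_j\bigl(\sum_iA_{ij}G_i\bigr)^2$. Then use $\E[G_iG_k]=\delta_{ik}$ to get $\sum_{i,j}A_{ij}^2=\abs A_{\HS}^2$. For $p\geq2$, monotonicity of $L^r$ norms on a probability space (Jensen or H\"older) gives
\[
 (\E\abs{A^{\mathsf T}G}^2)^{1/2}\leq(\E\abs{A^{\mathsf T}G}^p)^{1/p}\, ,
\]
which is the asserted bound.

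No step is a real obstacle. The only point needing care is measurability and Fubini in (ii), and the non-negativity of the integrand handles it.
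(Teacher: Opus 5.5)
Your proposal is correct and follows the paper's proof step for step. Part (i) uses stability of Gaussian laws under linear maps, and part (ii) conditions on $X$ with Tonelli. In part (iii), your coordinate expansion is the paper's trace identity $\E\,G^{\mathsf T}AA^{\mathsf T}G=\operatorname{tr}(AA^{\mathsf T})$ written out, followed by Jensen.
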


\begin{proof}
(i) is the stability of Gaussian laws under linear maps.  (ii) follows by
conditioning on $X$ and applying (i) with $v=v(X)$.  For (iii),
$\E\abs{A^{\mathsf T}G}^2=\E\,G^{\mathsf T}AA^{\mathsf T}G
=\operatorname{tr}(AA^{\mathsf T})=\abs A_{\HS}^2$, and the inequality follows from
Jensen's inequality.
\end{proof}

\subsection{Gaussian Hilbert randomization}
\label{app:randomization}

The estimate below passes scalar $L^q$ bounds to Hilbert-valued ones
without any loss in $\dim \cH$; it is used in \cref{lem:Hilbert-extension}.

\begin{lemma}[Gaussian Hilbert randomization]\label{lem:gaussian-randomization}
Let $\cH$ be a finite-dimensional Hilbert space, let $y_1,\dots,y_m\in \cH$, and let
$g_1,\dots,g_m$ be independent standard Gaussians.  For every $1\le q\le2$,
\begin{equation*}
 m_q\,\biggl(\sum_{a=1}^m\norm{y_a}_\cH^2\biggr)^{1/2}
 \le
 \biggl(\E_g\norm{\sum_{a=1}^m g_a y_a}_\cH^q\biggr)^{1/q}
 \le
 \biggl(\sum_{a=1}^m\norm{y_a}_\cH^2\biggr)^{1/2}\, ,
\end{equation*}
where $m_q\geq m_1=\sqrt{2/\pi}$.  Moreover, for every $r\ge2$,
\begin{equation*}
 \biggl(\E_g\norm{\sum_{a=1}^m g_a y_a}_\cH^r\biggr)^{1/r}
 \leq m_r\,\biggl(\sum_{a=1}^m\norm{y_a}_\cH^2\biggr)^{1/2}
 \leq\sqrt r\,\biggl(\sum_{a=1}^m\norm{y_a}_\cH^2\biggr)^{1/2}\, .
\end{equation*}
\end{lemma}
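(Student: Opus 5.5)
Write $S\deq\sum_{a=1}^m g_ay_a$, an $\cH$-valued centered Gaussian vector, and $\sigma^2\deq\sum_a\norm{y_a}_\cH^2$. By orthonormality of the $g_a$, $\E\norm S_\cH^2=\sigma^2$. Every bound in the statement compares $\norm S_\cH$ with its second moment, so the plan uses only monotonicity of $L^q$ norms plus one lower comparison obtained by testing against a random unit vector.

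\emph{Upper bounds.} For $1\le q\le2$, Jensen (or H\"older) gives $(\E\norm S^q)^{1/q}\le(\E\norm S^2)^{1/2}=\sigma$. For $r\ge2$, diagonalize the covariance operator $\Sigma\deq\sum_a y_a\otimes y_a$ of $S$ on $\cH$. This writes $S\stackrel{\mathrm{law}}=\sum_k\sqrt{\lambda_k}\,\xi_ke_k$ with $\xi_k$ i.i.d.\ standard Gaussians and $\sum_k\lambda_k=\sigma^2$. Then $\norm S^2=\sum_k\lambda_k\xi_k^2$. Since $r/2\ge1$, Minkowski's inequality in $L^{r/2}$ gives
\[
 \norm{\norm S^2}_{r/2}\le\sum_k\lambda_k\norm{\xi_k^2}_{r/2}=\sigma^2m_r^2 .
\]
This yields $(\E\norm S^r)^{1/r}\le m_r\sigma$, and $m_r\le\sqrt{r-1}\le\sqrt r$ by \cref{lem:Gaussian-moments}.

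\emph{Lower bound for $1\le q\le2$.} This is the only step needing an idea. Fix the realization of $S$ and let $\zeta$ be an independent standard Gaussian vector in $\cH\cong\R^{\dim\cH}$. Conditionally on $S$, the variable $\ip{S}{\zeta}$ is centered Gaussian with variance $\norm S^2$, so $\E_\zeta\abs{\ip S\zeta}^q=m_q^q\norm S^q$ by \cref{lem:Gaussian-linear-forms}(i). Swapping the order of conditioning, for fixed $\zeta$ the variable $\ip S\zeta=\sum_ag_a\ip{y_a}{\zeta}$ is centered Gaussian with variance $v(\zeta)^2\deq\sum_a\ip{y_a}\zeta^2$. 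Hence
\[
 m_q^q\,\E\norm S^q=\E_\zeta\E_g\abs{\ip S\zeta}^q=m_q^q\,\E_\zeta v(\zeta)^q .
\]
So the task reduces to showing $\E_\zeta v(\zeta)^q\ge m_q^q\sigma^q$. This reformulation is not obviously easier, so I instead plan a direct argument.

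By Hölder with exponents chosen to interpolate between $L^q$ and $L^4$ (log-convexity of $t\mapsto\log\norm{\,\norm S\,}_{1/t}$), write
\[
 \sigma^2=\E\norm S^2\le(\E\norm S^q)^{\theta}(\E\norm S^4)^{1-\theta}.
\]
Then bound $\E\norm S^4=\sum_{k,l}\lambda_k\lambda_l\E\xi_k^2\xi_l^2\le3\sigma^4$. This gives a lower bound with some constant, but not $m_q$.

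For the sharp constant $m_q$ I use the diagonal form directly. We have $\norm S=(\sum_k\lambda_k\xi_k^2)^{1/2}$, and the function $(\lambda_k)\mapsto\E(\sum_k\lambda_k\xi_k^2)^{q/2}$ is concave on the simplex $\{\sum_k\lambda_k=\sigma^2\}$ because $q/2\le1$. A concave function on a simplex attains its minimum at a vertex. At the vertex $\lambda=\sigma^2e_1$ its value is $\sigma^q\E\abs{\xi_1}^q=m_q^q\sigma^q$. This proves the lower bound.

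Finally, $m_q\ge m_1=\sqrt{2/\pi}$ by monotonicity of $q\mapsto m_q$, noted after \cref{lem:Gaussian-moments}. The concavity-on-the-simplex observation is the main step; the rest is routine.
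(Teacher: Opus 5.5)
Your argument is correct, but it takes a different route from the paper. The paper proves both halves at once by citing the sharp Gaussian moment comparison of Lata\l a and Oleszkiewicz, $\norm Y_{L^u}\le(m_u/m_v)\norm Y_{L^v}$ for $u\ge v>0$. That comparison is a consequence of their S-inequality and holds for centered Gaussian vectors in any separable Banach space; the paper then specializes it to $(u,v)=(2,q)$ and $(u,v)=(r,2)$.

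You instead use the Hilbert structure directly. After diagonalizing the covariance, $\norm S_\cH^2=\sum_k\lambda_k\xi_k^2$ with $\lambda_k\ge0$ and $\sum_k\lambda_k=\sigma^2$. The map $\lambda\mapsto\E\bigl(\sum_k\lambda_k\xi_k^2\bigr)^{s/2}$ is therefore concave on this simplex when $s\le2$ and convex when $s\ge2$. Its minimum (respectively maximum) is attained at a vertex, where it equals $m_s^s\sigma^s$. Your Minkowski step in $L^{r/2}$ is just the convex half of this observation.

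Your route is elementary and self-contained, and it gives exactly the stated constants. The cited theorem buys more generality (any pair of moments, any Banach norm). The paper never uses that generality, since every application compares with the second moment in a Hilbert space.

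The two detours in your lower-bound paragraph can be deleted. The random test vector $\zeta$ only restates the problem as $\E_\zeta\ip{\Sigma\zeta}{\zeta}^{q/2}\ge m_q^q\sigma^q$, and the interpolation with $L^4$ loses the sharp constant. The concavity argument alone proves the bound.
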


\begin{proof}
    Put $Y\deq\sum_{a=1}^m g_a y_a$ and
    $\sigma^2\deq\E_g\norm Y_\cH^2=\sum_{a=1}^m \norm{y_a}_\cH^2$.
The Gaussian moment comparison of Lata\l a and Oleszkiewicz
\cite[Corollary~3]{LatalaOleszkiewicz1999} states that
\[
 \norm Y_{L^u(g;\cH)}
 \leq\frac{m_u}{m_v}\,\norm Y_{L^v(g;\cH)}\, ,
 \qquad u\geq v>0\, .
\]
Taking $(u,v)=(2,q)$ and using $m_2=1$ gives
$\norm Y_{L^q(g;\cH)}\geq m_q\,\sigma$; the upper bound
$\norm Y_{L^q(g;\cH)}\leq\sigma$ follows from $q\leq2$.
Taking $(u,v)=(r,2)$ gives
$\norm Y_{L^r(g;\cH)}\leq m_r\,\sigma$, and
$m_r\leq\sqrt{r-1}\leq\sqrt r$ by \cref{lem:Gaussian-moments}.
\end{proof}

\subsection{Tools for the Calder\'on--Zygmund decomposition}
\label{app:bochner-compactness}

The three lemmas below are used in the Calder\'on--Zygmund decomposition
of \cref{sec:CZ}.  Compactness enters only to extract a limit of the
penalized minimizers at fixed $d$; the bounds in the resulting
decomposition are uniform in $d$.

\begin{lemma}[Bochner identity]\label{lem:bochner}
For every $u\in\Dom(\cN)$,
\begin{equation}\label{eq:bochner}
 \norm{\cN u}_{L^2(\gamma_d)}^2
 =\norm{D^2u}_{L^2(\gamma_d;\HS_d)}^2
  +\norm{Du}_{L^2(\gamma_d;\R^d)}^2\, .
\end{equation}
In particular, $u$ has a weak Hessian in $L^2(\gamma_d;\HS_d)$, and
$u\in W^{2,2}(\gamma_d)$.
\end{lemma}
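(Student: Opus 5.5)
The plan is to prove \eqref{eq:bochner} first on polynomials, where every step is an explicit chaos computation, and then pass to all of $\Dom(\cN)$ by approximation.

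\emph{Polynomials.} Let $u$ be a polynomial. The weak formulation in \cref{lem:N-weak}(i) gives
\[
 \norm{\cN u}_2^2=\E[\cN u\,\cN u]=\E\ip{Du}{D\cN u}\, .
\]
The commutation relation $D_i\cN=(\cN+1)D_i$ of \cref{lem:N-weak}(ii) then gives
\[
 \E\ip{Du}{D\cN u}=\sum_i\E[D_iu\,(\cN+1)D_iu]=\sum_i\bigl(\E[D_iu\,\cN D_iu]+\E(D_iu)^2\bigr)\, .
\]
Applying \cref{lem:N-weak}(i) once more to each $D_iu$, we get $\E[D_iu\,\cN D_iu]=\E\abs{DD_iu}^2$. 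Summing over $i$ gives $\norm{D^2u}_2^2$, which proves \eqref{eq:bochner} for polynomials.

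An equivalent route is through the chaos expansion. On the $n$-th chaos, \eqref{eq:Du-chaos} applied twice shows $\norm{D^2u_n}_2^2=n(n-1)\norm{u_n}_2^2$, since each component $D_iu_n$ lies in $\mathcal C_{n-1}$. Because $D^2$ maps different chaoses to orthogonal subspaces, we obtain
\[
 \norm{D^2u}_2^2=\sum_n n(n-1)\norm{u_n}_2^2=\sum_n n^2\norm{u_n}_2^2-\sum_n n\norm{u_n}_2^2\, ,
\]
and the right-hand side is exactly $\norm{\cN u}_2^2-\norm{Du}_2^2$.

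\emph{Extension to $\Dom(\cN)$.} For $u\in\Dom(\cN)$, take the chaos truncations $u^{(N)}=\sum_{n\le N}u_n$; these are polynomials. Since $\sum n^2\norm{u_n}_2^2<\infty$, the sequence $\cN u^{(N)}$ converges to $\cN u$ in $L^2$, and $Du^{(N)}\to Du$ in $L^2$ by \eqref{eq:Du-chaos} (note $\Dom(\cN)\subset W^{1,2}$). Applying the polynomial identity to $u^{(N)}-u^{(M)}$ shows that $D^2u^{(N)}$ is Cauchy in $L^2(\gamma_d;\HS_d)$, so it has a limit $A$.

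It remains to identify $A$ with the weak Hessian of $u$. Fix $\varphi\in C_c^\infty$; since $\gamma_d$ has a positive smooth density, this can be phrased against Lebesgue measure with $\varphi$ replaced by $\varphi e^{\abs x^2/2}$ times a constant. Integrating by parts twice in Lebesgue measure gives $\int D_iD_ju^{(N)}\,\psi\,dx=\int u^{(N)}\,D_iD_j\psi\,dx$ for $\psi\in C_c^\infty$. As $N\to\infty$, the left side tends to $\int A_{ij}\psi\,dx$ and the right side to $\int u\,D_iD_j\psi\,dx$, because $L^2(\gamma_d)$ convergence implies $L^1_{\mathrm{loc}}$ convergence. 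Hence $D^2u=A\in L^2(\gamma_d;\HS_d)$ weakly, and passing to the limit in the polynomial identity gives \eqref{eq:bochner}. Since $u$ and $Du$ are also in $L^2$, this shows $u\in W^{2,2}(\gamma_d)$.

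\emph{Main obstacle.} The only delicate point is this identification of the $L^2$ limit of $D^2u^{(N)}$ with the distributional Hessian of $u$, and the local-integrability argument above settles it. The algebraic identity on polynomials is routine.
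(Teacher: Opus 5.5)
Your proposal is correct and takes essentially the same route as the paper: you prove the identity on polynomials via $D_i\cN=(\cN+1)D_i$ and $\E[v\,\cN v]=\norm{Dv}_2^2$, then extend to $\Dom(\cN)$ by chaos truncations, applying the polynomial identity to differences so that $D^2u^{(N)}$ is Cauchy. Your explicit $L^1_{\mathrm{loc}}$ argument identifying the limit with the distributional Hessian spells out a step the paper asserts in a single sentence.
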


This is the $\Gamma_2$ identity of the Ornstein--Uhlenbeck semigroup, whose
curvature is $1$ \cite[Section~2.7]{BGL14}.  We include
the short verification for completeness.

\begin{proof}
For a polynomial $u$, \cref{lem:IBP} and
$D_i\cN=(\cN+1)D_i$ give
\begin{align*}
 \norm{\cN u}_2^2
 &=\E\ip{D\cN u}{Du}
 =\sum_{i=1}^d\E[D_iu\,(\cN+1)D_iu]\\
 &=\sum_{i=1}^d\bigl(\norm{DD_iu}_2^2+\norm{D_iu}_2^2\bigr)
 =\norm{D^2u}_{L^2(\HS_d)}^2+\norm{Du}_2^2\, ,
\end{align*}
where the third equality uses \cref{lem:IBP} once more in the form
$\E[v\,\cN v]=\norm{Dv}_2^2$.  For $u\in\Dom(\cN)$, apply this identity to the
finite chaos sums $u^{(N)}=\sum_{n\leq N}u_n$.  Since
$\cN u^{(N)}\to\cN u$ in $L^2$, the identity applied to differences shows
that $Du^{(N)}$ and $D^2u^{(N)}$ are Cauchy in their respective $L^2$
spaces.  Their limits are the weak first and second derivatives of $u$.
Passing to the limit proves \eqref{eq:bochner}.
\end{proof}

\begin{lemma}[Compactness used in the Calder\'on--Zygmund decomposition]\label{lem:compact}
For fixed $d$, every sequence bounded in $W^{1,2}(\gamma_d)$ has an
$L^2(\gamma_d)$-convergent subsequence.
\end{lemma}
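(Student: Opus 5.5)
The plan is to prove the Rellich-type compactness through the spectral (chaos) description of $W^{1,2}(\gamma_d)$ from \cref{lem:chaos-Sobolev}. There the $W^{1,2}$ seminorm weights the $n$-th chaos by $n$. Hence the embedding $W^{1,2}\hookrightarrow L^2$ is diagonal in the chaos decomposition, and its high-chaos tail is uniformly small. Each fixed chaos $\mathcal C_n$ is finite-dimensional, since $d$ is fixed. The embedding is therefore a norm limit of finite-rank maps, hence compact.

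Concretely, let $(u^{(k)})$ be bounded in $W^{1,2}(\gamma_d)$, say $\norm{u^{(k)}}_2^2+\norm{Du^{(k)}}_2^2\le K$. By \eqref{eq:Du-chaos}, for every $N\ge1$,
\[
 \Bigl\|\sum_{n>N}u^{(k)}_n\Bigr\|_2^2=\sum_{n>N}\norm{u^{(k)}_n}_2^2\le\frac1{N}\sum_{n>N}n\norm{u^{(k)}_n}_2^2\le\frac{K}{N}\, ,
\]
uniformly in $k$.

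Next fix $N$. The truncations $J_{\le N}u^{(k)}\deq\sum_{n\le N}u^{(k)}_n$ lie in $\bigoplus_{n\le N}\mathcal C_n$, which is the space of polynomials of degree at most $N$ in $d$ variables and so has finite dimension. They are bounded in $L^2$ by $\sqrt K$. Bolzano--Weierstrass therefore yields an $L^2$-convergent subsequence. A diagonal extraction over $N=1,2,\dots$ produces one subsequence $(u^{(k_j)})$ whose truncations converge in $L^2$ for every $N$.

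Finally, the subsequence is Cauchy in $L^2$. Given $\varepsilon>0$, choose $N$ with $K/N<\varepsilon^2/16$, so that each tail has norm below $\varepsilon/4$. Then choose $j_0$ so that the level-$N$ truncations are within $\varepsilon/2$ of each other for $j,l\ge j_0$. The triangle inequality gives $\norm{u^{(k_j)}-u^{(k_l)}}_2<\varepsilon$. Completeness of $L^2$ then supplies the limit.

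No step is a genuine obstacle. The only point needing care is using the chaos identity \eqref{eq:Du-chaos} rather than any Euclidean Rellich theorem, because the Gaussian weight makes the domain effectively unbounded. The finite dimensionality of each chaos is exactly where fixed $d$ enters, and this is consistent with the remark that compactness is used only at fixed $d$.
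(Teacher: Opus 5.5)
Your proof is correct and is essentially the paper's argument. Both use the uniform tail bound $\sum_{n>N}\norm{u_n}_2^2\le N^{-1}\norm{Du}_2^2$ from \eqref{eq:Du-chaos}, the finite-dimensionality of $\bigoplus_{n\le N}\mathcal C_n$ at fixed $d$, and a diagonal extraction; you just write out the final Cauchy estimate that the paper leaves implicit.
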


\begin{proof}
By \eqref{eq:Du-chaos}, for every integer $N\geq1$ and every
$u\in W^{1,2}(\gamma_d)$, $\sum_{n>N}\norm{u_n}_2^2\leq N^{-1}\,\norm{Du}_2^2$.
For fixed $d$ and $N$, the sum $\bigoplus_{n\leq N}\mathcal C_n$ is
finite-dimensional.  A diagonal subsequence argument, followed by the
uniform tail estimate above, proves the claim.
\end{proof}

\begin{lemma}[First and second derivatives vanish on a zero set]\label{lem:zero-set}
Let $u\in W^{2,2}_{\mathrm{loc}}(\R^d)$.  Then
\[
 Du=0\quad\text{a.e. on }\{u=0\}\, ,
 \qquad
 D^2u=0\quad\text{a.e. on }\{u=0\}\, .
\]
The same conclusion holds for $u\in\Dom(\cN)$, with respect to $\gamma_d$.
\end{lemma}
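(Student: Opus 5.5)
The plan is to apply the classical Stampacchia fact, that a Sobolev function has vanishing gradient almost everywhere on its level sets, twice: first to $u$, then to each first derivative $D_iu$.

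\emph{Step 1: $Du=0$ a.e. on $\{u=0\}$.} For $u\in W^{1,1}_{\mathrm{loc}}(\R^d)$ I would use the chain rule for truncations: $u^+$ and $u^-$ are in $W^{1,1}_{\mathrm{loc}}$ with $Du^+=\1_{\{u>0\}}Du$ and $Du^-=-\1_{\{u<0\}}Du$. Since $u=u^+-u^-$, it follows that $Du=\1_{\{u\ne0\}}Du$, and therefore $Du=0$ a.e. on $\{u=0\}$. To get the truncation formula I would approximate $s\mapsto s^+$ by $C^1$ functions $\phi_\varepsilon$ with bounded derivative, for instance $\phi_\varepsilon(s)=\sqrt{s^2+\varepsilon^2}-\varepsilon$ for $s>0$ and $0$ otherwise. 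Then $D\phi_\varepsilon(u)=\phi_\varepsilon'(u)Du$, and I pass to the limit in $L^1_{\mathrm{loc}}$ by dominated convergence.

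\emph{Step 2: $D^2u=0$ a.e. on $\{u=0\}$.} For $u\in W^{2,2}_{\mathrm{loc}}$, each $v_i\deq D_iu$ lies in $W^{1,2}_{\mathrm{loc}}$. Step 1 applied to $v_i$ gives $Dv_i=0$ a.e. on $\{v_i=0\}$. By Step 1 applied to $u$, the set $\{u=0\}$ is contained in $\{v_i=0\}$ up to a null set. Hence $D_jD_iu=0$ a.e. on $\{u=0\}$ for all $i,j$, which is the second claim.

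\emph{Step 3: the Gaussian version.} For $u\in\Dom(\cN)$, \cref{lem:bochner} gives $u\in W^{2,2}(\gamma_d)$, meaning $u$, $Du$ and $D^2u$ are weak derivatives in $L^2(\gamma_d)$. The Gaussian density is bounded above and below on compact sets, so $L^2(\gamma_d)\subset L^2_{\mathrm{loc}}(\dd x)$, and the weak derivatives are the same distributions. Thus $u\in W^{2,2}_{\mathrm{loc}}(\R^d)$. Since $\gamma_d$ and Lebesgue measure have the same null sets, the conclusion transfers to $\gamma_d$-a.e.

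The only real obstacle is the careful justification of the chain rule for $u^+$. That argument is standard, as in Gilbarg--Trudinger, Lemma 7.6 and Lemma 7.7, and I would cite it or carry out the mollified approximation sketched in Step 1.
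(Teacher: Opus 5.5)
Your proposal is correct and follows the paper's proof essentially step for step. The paper also cites Gilbarg--Trudinger Lemma~7.7 and applies it first to $u$ and then to each $\partial_j u$, using $\{u=0\}\subset\{\partial_j u=0\}$ up to a null set. It then handles $u\in\Dom(\cN)$ via the Bochner identity, the local comparability of the Gaussian density with Lebesgue measure, and the coincidence of null sets. Your truncation sketch in Step~1 is simply the proof of the cited lemma.
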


\begin{proof}
By \cite[Lemma~7.7]{GilbargTrudinger2001}, the weak gradient of a
Sobolev function vanishes almost everywhere on each of its level sets.
Applying this result on bounded balls to $u$ gives $Du=0$ almost everywhere
on $\{u=0\}$.  For each $j=1,\dots,d$, we have
$\partial_j u\in W^{1,2}_{\mathrm{loc}}(\R^d)$, so a second application gives
$D(\partial_j u)=0$ almost everywhere on $\{\partial_j u=0\}$.
Since $\{u=0\}\subset\{\partial_j u=0\}$ up to a null set, this proves
$D^2u=0$ almost everywhere on $\{u=0\}$.

If $u\in\Dom(\cN)$, \cref{lem:bochner} gives
$u\in W^{2,2}(\gamma_d)\subset W^{2,2}_{\mathrm{loc}}(\R^d)$, since the
density of $\gamma_d$ is bounded below by a positive constant on each
bounded ball.  The preceding argument therefore applies, and Lebesgue-null
and $\gamma_d$-null sets coincide.
\end{proof}

\section{A direct proof of the bounded derivative inequality}
\label{app:bounded-derivative}

We give a direct change-of-variables proof of the bounded derivative
consequence of \cref{thm:main}.

\begin{corollary}[Bounded derivative inequality]
\label{cor:bounded-derivative}
Let $V:\R^d\to\R^d$ belong to
$W^{1,p_0}(\gamma_d;\R^d)$ for some $p_0\geq2$, and suppose that
\[
 L\deq\norm{DV}_{L^\infty(\gamma_d;\HS_d)}<\infty\, .
\]
Then, for every $p\geq2$,
\begin{equation}
 \norm{\delta V}_{L^p(\gamma_d)}
 \leq C\sqrt p\,\abs{\E V}+CpL\, .
 \label{eq:bounded-derivative}
\end{equation}
If $\E V=0$, then
\begin{equation}
 \Pp\bigl\{\abs{\delta V}\geq t\bigr\}
 \leq2\exp\Bigl\{-\,\frac{ct}{L}\Bigr\}\, ,
 \qquad t\geq0\, .
 \label{eq:bounded-derivative-tail}
\end{equation}
The assertions have their usual interpretation when $L=0$, and the constants
are universal.
\end{corollary}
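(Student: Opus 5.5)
The appendix announces a direct change-of-variables proof, so the plan avoids the Riesz-transform machinery. The key is a transport representation of $\delta V$. For a smooth, bounded-derivative field $V$ and small $t$, the map $T_t(x)\deq x+tV(x)$ is a diffeomorphism of $\R^d$ once $t L<1$. The density of its pushforward of $\gamma_d$ can be written explicitly. Differentiating the change-of-variables formula
\[
 \E\,\varphi(T_t(X))=\E\bigl[\varphi(X)\,\rho_t(X)\bigr]
\]
at $t=0$ recovers $\E[\ip{D\varphi}{V}]=\E[\varphi\,\delta V]$. Rather than differentiating, however, I would use the exact identity
\[
 \log\frac{\dd\gamma_d}{\dd\,(T_t^{-1})_\#\gamma_d}(x)
 =t\,x\cdot V(x)+\tfrac{t^2}{2}\abs{V(x)}^2-\log\det(\Id+tDV(x))\, .
\]
Here $\log\det(\Id+tDV)=t\operatorname{tr}DV+O(t^2\abs{DV}_{\HS}^2)$ when $t\abs{DV}_{\HS}\le 1/2$. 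This gives
\[
 t\,\delta V=\log\rho_t+O\bigl(t^2\abs V^2+t^2L^2\bigr)\, .
\]
Since $\E\rho_t^{\pm1}$-type quantities are exactly $1$ (they are densities), this yields exponential moment bounds.

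\textbf{Tail bound first (centered case).} Assume $\E V=0$. By Gaussian concentration (Appendix A), each $\ip{e}{V}$ is $L$-Lipschitz, so $\abs V$ has subgaussian fluctuations. More precisely, $\abs V$ is $L$-Lipschitz with $\E\abs V\le\sqrt d L$ by Poincaré. This $\sqrt d$ is the danger, so instead of bounding $\abs V^2$ pointwise, I would use the form
\[
 e^{t\delta V}=\rho_t\cdot\exp\bigl\{-\tfrac{t^2}{2}\abs V^2+(\log\det(\Id+tDV)-t\operatorname{tr}DV)\bigr\}\, ,
\]
in which the $\abs V^2$ term enters with a \emph{favourable} sign. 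Since $\log(1+\lambda)-\lambda\le0$ for real eigenvalues, the determinant correction is bounded above after symmetrization. Handling nonsymmetric $DV$ takes care: I would bound $\abs{\log\det(\Id+A)-\operatorname{tr}A}\le\abs A_{\HS}^2$ for $\abs A_{\op}\le1/2$, using $\abs{\operatorname{tr}(A^k)}\le\abs A_{\HS}^2\abs A_{\op}^{k-2}$. Then, for $t\le 1/(2L)$,
\[
 \E e^{t\delta V}\le e^{t^2L^2}\,\E\rho_t=e^{t^2L^2}\, .
\]
Chernoff with $t=1/(2L)$ gives $\Pp\{\delta V\ge s\}\le e^{1/4}e^{-s/(2L)}$, and applying the same bound to $-V$ gives \eqref{eq:bounded-derivative-tail}. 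The $L^p$ bound $CpL$ then follows by integrating the tail.

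\textbf{Mean term and approximation.} For general $V$, split $V=(V-\E V)+\E V$. Then $\delta(\E V)=X\cdot\E V$ has $L^p$ norm $m_p\abs{\E V}\le\sqrt p\abs{\E V}$, and the triangle inequality gives \eqref{eq:bounded-derivative}. For general $V\in W^{1,p_0}$ with $\norm{DV}_\infty=L$, mollify via $P_\epsilon$. By \cref{lem:OU}(iii), this preserves the derivative bound (with the factor $e^{-\epsilon}\le1$) and the mean, and gives smooth fields with bounded derivatives. By \eqref{eq:delta-L2}, $\delta P_\epsilon V\to\delta V$ in $L^2$, and Fatou passes the bounds to the limit.

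\textbf{Main obstacle.} The main obstacle is justifying the exact density identity: $T_t$ must be a global diffeomorphism, and $\E\rho_t=1$ must hold with $V$ possibly unbounded, growing linearly. Global invertibility follows from Hadamard's theorem, since $\abs{tDV}_{\op}\le 1/2$ makes $T_t$ bi-Lipschitz, so $\rho_t$ is a genuine probability density ratio. The remaining point is to check the sign conventions in $\rho_t$ so that the $\abs V^2$ term indeed appears with the helpful sign. If the sign comes out wrong, I would fall back on the Cauchy--Schwarz splitting $\E e^{t\delta V}\le(\E\rho_{2t})^{1/2}(\E e^{ct^2\abs V^2})^{1/2}$. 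That would require a subgaussian bound on $\abs V$ in terms of $L$ alone, which fails in high dimension, so the favourable sign is essential.
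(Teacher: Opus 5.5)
\textbf{Gap: the step $\E\rho_t=1$ is false, and the ``favourable sign'' rests entirely on it.} Your pointwise factorization $e^{t\delta V}=\rho_t\exp\{-\frac{t^2}{2}\abs V^2+\log\det(\Id+tDV)-t\operatorname{tr}DV\}$ is correct. However, by your own formula $\rho_t=\dd\gamma_d/\dd\nu_t$ with $\nu_t\deq(T_t^{-1})_\#\gamma_d$. So $\rho_t$ has mean one under $\nu_t$, not under $\gamma_d$. Under $\gamma_d$ one has $\E\rho_t=\int\rho_t^2\,\dd\nu_t\geq1$, and this can be infinite. The function with $\gamma_d$-mean one is $\rho_t^{-1}=\dd\nu_t/\dd\gamma_d$, that is,
\[
 \E\Bigl[\exp\Bigl\{-t\,\delta V-\frac{t^2}{2}\abs V^2\Bigr\}\,e^{-t\operatorname{tr}DV}\det(\Id+tDV)\Bigr]=1\, .
\]
In this identity $\abs V^2$ sits on the wrong side: to reach $\E e^{-t\delta V}$ you must multiply by $e^{t^2\abs V^2/2}\geq1$.

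Your resulting bound $\E e^{t\delta V}\leq e^{t^2L^2}$ is in fact false. Take $d=1$ and $V(x)=x$. Then $\E V=0$, $L=1$, $\delta V=X^2-1$, and
\[
 \log\E e^{t\delta V}=-t-\tfrac12\log(1-2t)=t^2+\tfrac43t^3+\cdots>t^2
\]
for every $0<t<1/2$. Moreover $\E e^{t\delta V}=\infty$ at your Chernoff choice $t=1/(2L)$.

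\textbf{The fallback you rejected is what works, and it is the paper's proof.} The change of variables itself is the same as the paper's. Write $W\deq V-\E V$ and $\lambda=-t$, allowing both signs. The identity above and your determinant estimate give $\E\exp\{\lambda\delta W-\lambda^2\abs W^2/2\}\leq e^{\lambda^2L^2}$ for $\abs\lambda L\leq1/2$. Cauchy--Schwarz then gives
\[
 \E e^{\lambda\delta W/2}\leq\bigl(\E e^{\lambda\delta W-\lambda^2\abs W^2/2}\bigr)^{1/2}\bigl(\E e^{\lambda^2\abs W^2/2}\bigr)^{1/2}\, .
\]
Your objection that the last factor costs $\sqrt d$ is a miscalculation. $L$ bounds the Hilbert--Schmidt norm, so Poincar\'e applied to each component gives
\[
 \E\abs W^2=\sum_i\operatorname{Var}(V_i)\leq\E\abs{DV}_{\HS}^2\leq L^2\, ,
\]
with no factor $d$. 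Also, $\abs W$ is $L$-Lipschitz because the operator norm is dominated by the Hilbert--Schmidt norm. Gaussian concentration therefore bounds $\E e^{\lambda^2\abs W^2/2}$ by a universal constant for $\abs\lambda L\leq1/4$. Chernoff at $\lambda=\pm(4L)^{-1}$ then gives the tail bound, and integrating the tail gives the $CpL$ moment bound.

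Your treatment of the mean term and the semigroup approximation is fine.
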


\begin{proof}
We first suppose that $V$ is $C^1$ with bounded derivative.  Put
$m\deq\E V$ and $W\deq V-m$.  The Gaussian Poincar\'e inequality
\eqref{eq:Poincare}, applied componentwise, gives
\begin{equation}
 \E\abs W^2
 =\sum_{i=1}^d\operatorname{Var}(V_i)
 \leq\E\abs{DV}_{\HS}^2
 \leq L^2\, .
 \label{eq:vector-Poincare-bdd}
\end{equation}
Moreover, $x\mapsto\abs{W(x)}$ is $L$-Lipschitz.  Gaussian concentration
(\cref{app:standard-tools}), \eqref{eq:vector-Poincare-bdd}, and integration of the resulting tail show
that
\begin{equation}
 \sup_{\abs\lambda L\leq1/4}
 \E\exp\Bigl\{\frac{\lambda^2}{2}\,\abs W^2\Bigr\}
 \leq C_0\, .
 \label{eq:W-square-exponential}
\end{equation}
Indeed, if $Z\deq(\abs W-\E\abs W)_+$, then
$\Pp\{Z\geq t\}\leq e^{-t^2/(2L^2)}$, while
$\E\abs W\leq L$ and $\abs W^2\leq2L^2+2Z^2$.

Fix $\lambda$ with $\abs\lambda L<1$.  The map
\[
 T_\lambda : x \longmapsto x-\lambda W(x)
\]
is a global $C^1$ diffeomorphism: for each $y$, the equation
$x=y+\lambda W(x)$ has a unique solution by the contraction principle.
Moreover, $DT_\lambda=I-\lambda DW$ is invertible, so the inverse function
theorem gives a $C^1$ inverse.  Its determinant is positive: for each $x$,
$I-t\lambda DW(x)$ remains invertible for $0\leq t\leq1$, and its
determinant starts at $1$ and cannot change sign.
The change of variables $y=T_\lambda(x)$ gives
\begin{align*}
 1
 &=(2\pi)^{-d/2}\int_{\R^d}
   e^{-\abs{x-\lambda W(x)}^2/2}
   \det(I-\lambda DW(x))\,\dd x
 =\E\bigl[e^{\lambda X\cdot W-\lambda^2\abs W^2/2}
   \det(I-\lambda DW)\bigr]\, .
\end{align*}
Using $X\cdot W=\delta W+\operatorname{tr}(DW)$ yields
\begin{equation*}
 1=\E\Bigl[\exp\Bigl\{\lambda\delta W
          -\frac{\lambda^2}{2}\,\abs W^2\Bigr\}\,J_\lambda\Bigr]\, ,
 \qquad
 J_\lambda
 \deq e^{\lambda\operatorname{tr}(DW)}
   \det(I-\lambda DW)\, .
\end{equation*}
If $B\deq\lambda DW$ and $\norm B_{\mathrm{op}}\leq1/2$, then
\[
 \log J_\lambda
 =-\sum_{k=2}^\infty\frac{\operatorname{tr}(B^k)}{k}\, .
\]
Furthermore,
\[
 \abs{\operatorname{tr}(B^k)}
 \leq\abs B_{\HS}^2\,\norm B_{\mathrm{op}}^{k-2}\, .
\]
It follows that $J_\lambda\geq e^{-\lambda^2L^2}$ whenever
$\abs\lambda L\leq1/2$, and therefore
\begin{equation}
 \E\exp\Bigl\{\lambda\delta W
          -\frac{\lambda^2}{2}\,\abs W^2\Bigr\}
 \leq e^{\lambda^2L^2}\, .
 \label{eq:damped-divergence-mgf}
\end{equation}
Cauchy--Schwarz, \eqref{eq:W-square-exponential}, and
\eqref{eq:damped-divergence-mgf} yield
\[
 \E e^{\lambda\delta W/2}
 \leq
 \bigl(\E e^{\lambda\delta W-\lambda^2\abs W^2/2}\bigr)^{1/2}\,
 \bigl(\E e^{\lambda^2\abs W^2/2}\bigr)^{1/2}
 \leq C_1
\]
for both signs of $\lambda$ whenever $\abs\lambda L\leq1/4$.  Taking
$\lambda=\pm(4L)^{-1}$ and using Chernoff's bound proves
\eqref{eq:bounded-derivative-tail}, after adjusting the constant
for small $t$.  Integrating the tail gives
$\norm{\delta W}_{L^p}\leq CpL$.  Finally,
\[
 \delta V=\delta W+\ip{X}{m}\, ,
 \qquad
 \norm{\ip{X}{m}}_{L^p}=m_p\,\abs m\lesssim\sqrt p\,\abs m\, ,
\]
which proves \eqref{eq:bounded-derivative}.

For the stated Sobolev formulation, use the Lipschitz representative supplied
by the bounded weak derivative and smooth it with the Ornstein--Uhlenbeck
semigroup.  This representative has at most linear growth and hence belongs
to every finite Gaussian $L^r$ space.  By \cref{lem:OU}, the mean is preserved,
$D(e^{-t\cN}W)=e^{-t}e^{-t\cN}DW$, so the derivative bound does not
increase.  Strong continuity of the semigroup, applied to $W$ and $DW$,
gives $e^{-t\cN}W\to W$ in $W^{1,2}$ as $t\downarrow0$.
By \eqref{eq:delta-L2}, the divergences converge in $L^2$ and, along a
subsequence, almost everywhere.  Fatou's lemma preserves each finite
moment estimate.  Optimizing Markov's
inequality over $r\geq2$ then gives the same tail bound.  This completes the
approximation argument.
\end{proof}

\section{Further comparisons with related work}
\label{app:related-work}

This appendix develops the comparisons mentioned in the introduction:
Gaussian Riesz transforms, divergence inequalities, and their connections
with variational methods and concentration.

\paragraph{First-order comparisons.}
Meyer's work \cite{Meyer1984} develops the first-order comparison recalled
in the introduction and its higher-order analogues.  Bakry
\cite{Bakry1985a,Bakry1985b} placed the argument in the broader setting of
symmetric diffusion semigroups, and Gundy~\cite{Gundy1986} gave a further
probabilistic proof.

Pisier~\cite{Pisier1988} found a notably direct analytic proof.  He embeds the
Gaussian problem into the rotation
$f(x\cos t+y\sin t)$, projects in the $y$ variable onto the first Wiener
chaos, and invokes Calder\'on transference for a one-dimensional singular
integral.  This makes the relation with the Hilbert transform explicit and
gives quantitative information on the constants.  For the first-order
comparison his proof gives an upper bound of order $q$ as $q\to\infty$ and
of order $(q-1)^{-3/2}$ as $q\downarrow1$.

For the first-order transform, subsequent work gave bounds of order
$q$ as $q\to\infty$ and $(q-1)^{-1}$ as $q\downarrow1$: see Arcozzi
\cite{Arcozzi1998} and the later martingale and Bellman
function estimates in
\cite{DragicevicVolberg2006,BanuelosOsekowski2015}.  Larsson-Cohn
\cite{LarssonCohn2002} proved matching lower bounds in both limits for
the two Meyer constants $C_q$ and $c_q^{-1}$.

\paragraph{Higher-order transforms.}
Pisier~\cite{Pisier1988} also carried the argument through for
the full iterated gradients of every fixed odd order, obtaining an upper
bound of order $q$ as $q\to\infty$ and an upper bound of order
$(q-1)^{-1-k/2}$ as $q\downarrow1$ for order $k$.  The scalar coefficient in
his calculation vanishes when $k$ is even.  Guti\'errez, Segovia, and Torrea
\cite{GutierrezSegoviaTorrea1996} later established dimension-free
full vector bounds by higher-order Littlewood--Paley theory, while Forzani,
Scotto, and Urbina \cite{ForzaniScottoUrbina2001} gave a short multiplier
proof for each fixed multi-index.  These qualitative results do not track the
optimal dependence as $q\downarrow1$ for the full tensor norm considered here.

For orders at least three, Forzani and Scotto
\cite{ForzaniScotto1998} proved that the standard Gaussian Riesz transforms
fail to be of weak type $(1,1)$ already on the line; Casarino, Ciatti, and
Sj\"ogren \cite{CasarinoCiattiSjogren2021} established the corresponding
dichotomy for general Ornstein--Uhlenbeck semigroups.  For $r\geq2$, the
upper bound in \cref{cor:even-Riesz} follows by composing $r$ second-order
estimates.

\paragraph{Weak-type estimates and variational decompositions.}
In fixed dimension, weak-type $(1,1)$ estimates for the first-order
Gaussian Riesz transforms were proved by Muckenhoupt
\cite{Muckenhoupt1969} on the line and by Fabes, Guti\'errez, and Scotto
\cite{FabesGutierrezScotto1994} in arbitrary dimension.

Coordinatewise weak-type $(1,1)$ estimates for second-order Gaussian
Riesz transforms
\cite{GarciaCuervaMauceriSjogrenTorrea1999,CasarinoCiattiSjogren2021}
do not immediately give a bound for the full Hilbert--Schmidt norm.  The
space $L^{1,\infty}$ is not
normable, so the coordinate estimates cannot simply be squared and summed
while retaining a dimension-free constant.  Dimension-free weak-type
bounds also depend on the operator: Aldaz \cite{Aldaz2011} proved that the
weak-type constants of the cubic maximal operator diverge with the
dimension, whereas Sj\"ogren's weak-type theorem for the
Ornstein--Uhlenbeck maximal operator \cite{Sjogren1983} allows its constant
to depend on the dimension.

In Euclidean space, Stein~\cite{Stein1983} proved dimension-free strong
estimates for Riesz transforms and later posed the weak-type problem at
the 1986 International Congress \cite{Stein1986}.  The dimension-free
weak-type $(1,1)$ theorem of Ouyang, Spector, and Stockdale
\cite{OuyangSpectorStockdale2026}, recalled in the introduction, follows
earlier quantitative work on the dimensional dependence
\cite{Janakiraman2004,SpectorStockdale2021}.  Their use of an obstacle
problem to construct a variational Calder\'on--Zygmund decomposition is
related to partial balayage, divisible sandpiles, and Lewy--Stampacchia
estimates \cite{GustafssonSakai1994,LevinePeres2009,ServadeiValdinoci2013}.
The same fractional obstacle approach has since yielded weak-type $(1,1)$ bounds for the full first-order Riesz transform with constant drift, uniformly in the dimension and drift~\cite{Mukherjee2026Drift}, and for the full horizontal Riesz transform on stratified Lie groups, uniformly in the group structure~\cite{MaoWangZhang2026Stratified}.

The Gaussian Calder\'on--Zygmund theory of Mauceri and Meda
\cite{MauceriMeda2007} uses
admissible balls and dimension-dependent constants.  Our variational
construction gives estimates independent of $d$ by working directly with
the Gaussian Dirichlet form.

\paragraph{Bounded Hessians and concentration.}
For gradient fields $V=DG$, the bounded derivative consequence of
\cref{thm:main} gives moment and tail estimates for $\cN G$ in terms of
$\abs{\E DG}$ and a uniform Hilbert--Schmidt bound on $D^2G$.
Functions with bounded Hessian
also appear in the second-order concentration estimates of Adamczak and
Wolff~\cite{AdamczakWolff2015}.  An independent change-of-variables proof
of the bounded derivative inequality for arbitrary vector fields is given
in \cref{app:bounded-derivative}.

\bibliographystyle{plain}
{\small
\bibliography{references}

@article{AdamczakWolff2015,
  author = {Adamczak, Rados\l{}aw and Wolff, Pawe\l{}},
  title = {Concentration inequalities for non-{L}ipschitz functions with bounded derivatives of higher order},
  journal = {Probability Theory and Related Fields},
  volume = {162},
  pages = {531--586},
  year = {2015},
}

@article{Aldaz2011,
  author = {Aldaz, Jes\'us M.},
  title = {The weak type $(1,1)$ bounds for the maximal function associated to cubes grow to infinity with the dimension},
  journal = {Annals of Mathematics},
  volume = {173},
  pages = {1013--1023},
  year = {2011},
}

@article{Arcozzi1998,
  author = {Arcozzi, Nicola},
  title = {{Riesz} transforms on compact {Lie} groups, spheres and {Gauss} space},
  journal = {Arkiv f\"or Matematik},
  volume = {36},
  pages = {201--231},
  year = {1998},
}

@incollection{Bakry1985a,
  author = {Bakry, Dominique},
  title = {Transformation de {Riesz} pour les semi-groupes sym\'etriques. Premi\`ere partie: \`etude de la dimension 1},
  booktitle = {S\'eminaire de Probabilit\'es XIX},
  series = {Lecture Notes in Mathematics},
  volume = {1123},
  pages = {130--144},
  publisher = {Springer},
  year = {1985},
}

@incollection{Bakry1985b,
  author = {Bakry, Dominique},
  title = {Transformation de {Riesz} pour les semi-groupes sym\'etriques. Seconde partie: \`etude sous la condition {$\Gamma_2\geq0$}},
  booktitle = {S\'eminaire de Probabilit\'es XIX},
  series = {Lecture Notes in Mathematics},
  volume = {1123},
  pages = {145--174},
  publisher = {Springer},
  year = {1985},
}

@book {BGL14,
    AUTHOR = {Bakry, Dominique and Gentil, Ivan and Ledoux, Michel},
     TITLE = {Analysis and geometry of {M}arkov diffusion operators},
    SERIES = {Grundlehren der Mathematischen Wissenschaften [Fundamental
              Principles of Mathematical Sciences]},
    VOLUME = {348},
 PUBLISHER = {Springer, Cham},
      YEAR = {2014},
     PAGES = {xx+552},
}

@article{BanuelosOsekowski2015,
  author = {Ba\~nuelos, Rodrigo and Os\c{e}kowski, Adam},
  title = {Sharp martingale inequalities and applications to {Riesz} transforms on manifolds, {Lie} groups and {Gauss} space},
  journal = {Journal of Functional Analysis},
  volume = {269},
  pages = {1652--1713},
  year = {2015},
}

@article{Bargmann1961,
  author = {Bargmann, Valentine},
  title = {On a {Hilbert} space of analytic functions and an associated integral transform. Part {I}},
  journal = {Communications on Pure and Applied Mathematics},
  volume = {14},
  pages = {187--214},
  year = {1961},
}

@article{Bernstein1912,
  author = {Bernstein, Sergei N.},
  title = {Sur l'ordre de la meilleure approximation des fonctions continues par des polyn\^omes de degr\'e donn\'e},
  journal = {M\'emoires de l'Acad\'emie Royale de Belgique},
  volume = {4},
  pages = {1--103},
  year = {1912},
}

@book{Bogachev1998,
  author = {Bogachev, Vladimir I.},
  title = {{Gaussian} measures},
  series = {Mathematical Surveys and Monographs},
  volume = {62},
  publisher = {American Mathematical Society},
  year = {1998},
}

@book{BorweinErdelyi1995,
  author = {Borwein, Peter and Erd\'elyi, Tam\'as},
  title = {Polynomials and polynomial inequalities},
  series = {Graduate Texts in Mathematics},
  volume = {161},
  publisher = {Springer},
  year = {1995},
}

@article{CasarinoCiattiSjogren2021,
  author = {Casarino, Valentina and Ciatti, Paolo and Sj\"ogren, Peter},
  title = {{Riesz} transforms of a general {Ornstein--Uhlenbeck} semigroup},
  journal = {Calculus of Variations and Partial Differential Equations},
  volume = {60},
  pages = {135},
  year = {2021},
}

@article{DragicevicVolberg2006,
  author = {Dragi\v{c}evi\'c, Oliver and Volberg, Alexander},
  title = {{Bellman} function and dimensionless estimates of classical and {Ornstein--Uhlenbeck} {Riesz} transforms},
  journal = {Journal of Operator Theory},
  volume = {56},
  pages = {167--198},
  year = {2006},
}

@book{GilbargTrudinger2001,
  author = {Gilbarg, David and Trudinger, Neil S.},
  title = {Elliptic partial differential equations of second order},
  edition = {2},
  series = {Classics in Mathematics},
  publisher = {Springer},
  year = {2001},
  note = {Reprint of the 1998 edition},
  doi = {10.1007/978-3-642-61798-0},
}

@article{EskenazisIvanisvili2020,
  author = {Eskenazis, Alexandros and Ivanisvili, Paata},
  title = {Dimension independent {Bernstein--Markov} inequalities in {Gauss} space},
  journal = {Journal of Approximation Theory},
  volume = {253},
  pages = {105377},
  year = {2020},
}

@article{FabesGutierrezScotto1994,
  author = {Fabes, Eugene B. and Guti\'errez, Cristian E. and Scotto, Roberto},
  title = {Weak-type estimates for the {Riesz} transforms associated with the {Gaussian} measure},
  journal = {Revista Matem\'atica Iberoamericana},
  volume = {10},
  pages = {229--281},
  year = {1994},
}

@article{ForzaniScotto1998,
  author = {Forzani, Liliana and Scotto, Roberto},
  title = {The higher order {Riesz} transform for {Gaussian} measure need not be of weak type $(1,1)$},
  journal = {Studia Mathematica},
  volume = {131},
  pages = {205--214},
  year = {1998},
}

@incollection{ForzaniScottoUrbina2001,
  author = {Forzani, Liliana and Scotto, Roberto and Urbina, Wilfredo},
  title = {A simple proof of the {$L^p$} continuity of the higher order {Riesz} transforms with respect to the {Gaussian} measure {$\gamma_d$}},
  booktitle = {S\'eminaire de Probabilit\'es XXXV},
  series = {Lecture Notes in Mathematics},
  volume = {1755},
  pages = {162--166},
  publisher = {Springer},
  year = {2001},
}

@article{Freud1971,
  author = {Freud, G\'eza},
  title = {A certain inequality of {Markov} type},
  journal = {Doklady Akademii Nauk SSSR},
  volume = {197},
  pages = {790--793},
  year = {1971},
}

@article{Freud1977,
  author = {Freud, G\'eza},
  title = {On {Markov--Bernstein}-type inequalities and their applications},
  journal = {Journal of Approximation Theory},
  volume = {19},
  pages = {22--37},
  year = {1977},
}

@article{GarciaCuervaMauceriSjogrenTorrea1999,
  author = {Garc\'\i{}a-Cuerva, Jos\'e and Mauceri, Giancarlo and Sj\"ogren, Peter and Torrea, Jos\'e L.},
  title = {Higher-order {Riesz} operators for the {Ornstein--Uhlenbeck} semigroup},
  journal = {Potential Analysis},
  volume = {10},
  pages = {379--407},
  year = {1999},
}

@article{Gundy1986,
  author = {Gundy, Richard F.},
  title = {Sur les transformations de {Riesz} pour le semi-groupe d'{Ornstein--Uhlenbeck}},
  journal = {Comptes Rendus de l'Acad\'emie des Sciences, S\'erie I},
  volume = {303},
  pages = {967--970},
  year = {1986},
}

@article{GustafssonSakai1994,
  author = {Gustafsson, Bj\"orn and Sakai, Makoto},
  title = {Properties of some balayage operators, with applications to quadrature domains and moving boundary problems},
  journal = {Nonlinear Analysis},
  volume = {22},
  pages = {1221--1245},
  year = {1994},
}

@article{GutierrezSegoviaTorrea1996,
  author = {Guti\'errez, Cristian and Segovia, Carlos and Torrea, Jos\'e L.},
  title = {On higher {Riesz} transforms for {Gaussian} measures},
  journal = {Journal of Fourier Analysis and Applications},
  volume = {2},
  pages = {583--596},
  year = {1996},
}

@article{HilleSzegoTamarkin1937,
  author = {Hille, Einar and Szeg\H{o}, G\'abor and Tamarkin, Jacob D.},
  title = {On some generalizations of a theorem of {A. Markoff}},
  journal = {Duke Mathematical Journal},
  volume = {3},
  pages = {729--739},
  year = {1937},
}

@article{Janakiraman2004,
  author = {Janakiraman, Prabhu},
  title = {Weak-type estimates for singular integrals and the {Riesz} transform},
  journal = {Indiana University Mathematics Journal},
  volume = {53},
  pages = {533--555},
  year = {2004},
}

@book{Janson1997,
  author = {Janson, Svante},
  title = {{Gaussian} {Hilbert} spaces},
  series = {Cambridge Tracts in Mathematics},
  volume = {129},
  publisher = {Cambridge University Press},
  year = {1997},
}

@article{Kosov2026,
  author = {Kosov, Egor},
  title = {Dimension-free {Markov--Bernstein} inequalities for product measures},
  journal = {arXiv preprint arXiv:2606.13575},
  year = {2026},
}

@article{LarssonCohn2002,
  author = {Larsson-Cohn, Lars},
  title = {On the constants in the {Meyer} inequality},
  journal = {Monatshefte f\"ur Mathematik},
  volume = {137},
  pages = {51--56},
  year = {2002},
}

@book{Ledoux2001,
  author = {Ledoux, Michel},
  title = {The concentration of measure phenomenon},
  series = {Mathematical Surveys and Monographs},
  volume = {89},
  publisher = {American Mathematical Society},
  year = {2001},
}

@article{LevinLubinsky1994,
  author = {Levin, Eli and Lubinsky, Doron S.},
  title = {{$L^p$} {Markov--Bernstein} inequalities for {Freud} weights},
  journal = {Journal of Approximation Theory},
  volume = {77},
  pages = {229--248},
  year = {1994},
}

@article{LevinePeres2009,
  author = {Levine, Lionel and Peres, Yuval},
  title = {Strong spherical asymptotics for rotor-router aggregation and the divisible sandpile},
  journal = {Potential Analysis},
  volume = {30},
  pages = {1--27},
  year = {2009},
}

@article{Markov1890,
  author = {Markov, Andrey A.},
  title = {On a question by {D. I. Mendeleev}},
  journal = {Zapiski Imperatorskoi Akademii Nauk, St. Petersburg},
  volume = {62},
  pages = {1--24},
  note = {In Russian},
  year = {1890},
}

@article{MaoWangZhang2026Stratified,
  author = {Mao, Sheng-Chen and Wang, Yaojun and Zhang, Ye},
  title = {A {$p=2$} dichotomy for uniform {Riesz} transform bounds on stratified {Lie} groups},
  journal = {arXiv preprint arXiv:2608.20267},
  year = {2026},
}

@article{MauceriMeda2007,
  author = {Mauceri, Giancarlo and Meda, Stefano},
  title = {{BMO} and {$H^1$} for the {Ornstein--Uhlenbeck} operator},
  journal = {Journal of Functional Analysis},
  volume = {252},
  pages = {278--313},
  year = {2007},
}

@incollection{Meyer1984,
  author = {Meyer, Paul-Andr\'e},
  title = {Transformations de {Riesz} pour les lois gaussiennes},
  booktitle = {S\'eminaire de Probabilit\'es XVIII},
  series = {Lecture Notes in Mathematics},
  volume = {1059},
  pages = {179--193},
  publisher = {Springer},
  year = {1984},
}

@article{Mukherjee2026Drift,
  author = {Mukherjee, Suman},
  title = {Dimension-free weak $(1,1)$ estimates for {Riesz} transforms with constant drift},
  journal = {arXiv preprint arXiv:2609.21458},
  year = {2026},
}

@article{Muckenhoupt1969,
  author = {Muckenhoupt, Benjamin},
  title = {{Hermite} conjugate expansions},
  journal = {Transactions of the American Mathematical Society},
  volume = {139},
  pages = {243--260},
  year = {1969},
}

@article{Nguyen2022Skorohod,
  author = {Nguyen, Nhu N.},
  title = {Exponential tightness of a family of {Skorohod} integrals},
  journal = {Electronic Communications in Probability},
  volume = {27},
  number = {1},
  pages = {1--14},
  year = {2022},
  doi = {10.1214/21-ECP442},
}

@book {Nua06Malliavin,
    AUTHOR = {Nualart, David},
     TITLE = {The {M}alliavin calculus and related topics},
    SERIES = {Probability and its Applications (New York)},
   EDITION = {Second},
 PUBLISHER = {Springer-Verlag, Berlin},
      YEAR = {2006},
     PAGES = {xiv+382},
}

@article{OuyangSpectorStockdale2026,
  author = {Ouyang, Yuyuan and Spector, Daniel and Stockdale, Cody B.},
  title = {A dimension-free weak-type $(1,1)$ bound for the vector {Riesz} transform on {$\mathbb R^n$}},
  journal = {arXiv preprint arXiv:2608.18068},
  year = {2026},
}

@incollection{Pisier1988,
  author = {Pisier, Gilles},
  title = {{Riesz} transforms: a simpler analytic proof of {P.-A. Meyer's} inequality},
  booktitle = {S\'eminaire de Probabilit\'es XXII},
  series = {Lecture Notes in Mathematics},
  volume = {1321},
  pages = {485--501},
  publisher = {Springer},
  year = {1988},
}

@article{PronkVeraar2014,
  author = {Pronk, Mark and Veraar, Mark},
  title = {Tools for {Malliavin} calculus in {UMD} {Banach} spaces},
  journal = {Potential Analysis},
  volume = {40},
  pages = {307--344},
  year = {2014},
}

@article{ServadeiValdinoci2013,
  author = {Servadei, Raffaella and Valdinoci, Enrico},
  title = {{Lewy--Stampacchia} type estimates for variational inequalities driven by (non)local operators},
  journal = {Revista Matem\'atica Iberoamericana},
  volume = {29},
  pages = {1091--1126},
  year = {2013},
}

@incollection{Sjogren1983,
  author = {Sj\"ogren, Peter},
  title = {On the maximal function for the {Mehler} kernel},
  booktitle = {Harmonic analysis (Cortona, 1982)},
  series = {Lecture Notes in Mathematics},
  volume = {992},
  pages = {73--82},
  publisher = {Springer},
  year = {1983},
}

@article{SpectorStockdale2021,
  author = {Spector, Daniel and Stockdale, Cody B.},
  title = {On the dimensional weak-type $(1,1)$ bound for {Riesz} transforms},
  journal = {Communications in Contemporary Mathematics},
  volume = {23},
  pages = {2050072},
  year = {2021},
}

@article{Stein1983,
  author = {Stein, Elias M.},
  title = {Some results in harmonic analysis in {$\mathbb R^n$}, for {$n\to\infty$}},
  journal = {Bulletin of the American Mathematical Society},
  volume = {9},
  pages = {71--73},
  year = {1983},
}

@inproceedings{Stein1986,
  author = {Stein, Elias M.},
  title = {Problems in harmonic analysis related to curvature and oscillatory integrals},
  booktitle = {Proceedings of the International Congress of Mathematicians (Berkeley, 1986)},
  volume = {1},
  pages = {196--221},
  publisher = {American Mathematical Society},
  year = {1987},
}

@book{Zhu2012,
  author = {Zhu, Kehe},
  title = {Analysis on {Fock} spaces},
  series = {Graduate Texts in Mathematics},
  volume = {263},
  publisher = {Springer},
  year = {2012},
}

@article{Chen+26PicardHMC,
      title={Smoothed {P}icard {H}amiltonian {M}onte {C}arlo}, 
      author={Fan Chen and Sinho Chewi and Jianfeng Lu and Matthew S. Zhang},
      year={2026},
      journal={arXiv preprint arXiv:2609.06906},
}

@book {Gra14Fourier,
    AUTHOR = {Grafakos, Loukas},
     TITLE = {Classical {F}ourier analysis},
    SERIES = {Graduate Texts in Mathematics},
    VOLUME = {249},
   EDITION = {Third},
 PUBLISHER = {Springer, New York},
      YEAR = {2014},
     PAGES = {xviii+638},
}

@article {NewSha1966,
    AUTHOR = {Newman, Donald J. and Shapiro, Harold S.},
     TITLE = {Certain {H}ilbert spaces of entire functions},
   JOURNAL = {Bull. Amer. Math. Soc.},
  FJOURNAL = {Bulletin of the American Mathematical Society},
    VOLUME = {72},
      YEAR = {1966},
     PAGES = {971--977},
}

@incollection {Bor1978Tail,
    AUTHOR = {Borell, Christer},
     TITLE = {Tail probabilities in {G}auss space},
 BOOKTITLE = {Vector space measures and applications ({P}roc. {C}onf.,
              {U}niv. {D}ublin, {D}ublin, 1977), {I}},
    SERIES = {Lecture Notes in Math.},
    VOLUME = {644},
     PAGES = {73--82},
 PUBLISHER = {Springer, Berlin-New York},
      YEAR = {1978},
}

@book {Ver18HighDimProb,
    AUTHOR = {Vershynin, Roman},
     TITLE = {High-dimensional probability},
    SERIES = {Cambridge Series in Statistical and Probabilistic Mathematics},
    VOLUME = {47},
      NOTE = {An introduction with applications in data science,
              With a foreword by Sara van de Geer},
 PUBLISHER = {Cambridge University Press, Cambridge},
      YEAR = {2018},
     PAGES = {xiv+284},
}

@misc{NISTDLMF,
  author = {{National Institute of Standards and Technology}},
  title = {{NIST} digital library of mathematical functions},
  howpublished = {\url{https://dlmf.nist.gov/}},
  note = {Version 1.2.8, released September 15, 2026},
  year = {2026},
}

@article{LatalaOleszkiewicz1999,
  author = {Lata\l{}a, Rafa\l{} and Oleszkiewicz, Krzysztof},
  title = {{Gaussian} measures of dilatations of convex symmetric sets},
  journal = {Ann. Probab.},
  volume = {27},
  number = {4},
  pages = {1922--1938},
  year = {1999},
  doi = {10.1214/aop/1022874821},
}
}

\end{document}